\documentclass{amsart}
\usepackage{amsmath, amssymb, amsthm}
\usepackage[hypertex,dvips]{hyperref}
\usepackage{hypbmsec}
\usepackage{graphicx}

\newtheorem{prop}{Proposition}[section]
\newtheorem{cor}[prop]{Corollary}
\newtheorem{thm}[prop]{Theorem}
\newtheorem{lemma}[prop]{Lemma}

\newtheorem{conj}[prop]{Conjecture}

\theoremstyle{definition}
\newtheorem{defn}[prop]{Definition}

\theoremstyle{remark}

\newtheorem{rem}[prop]{Remark}

\DeclareMathOperator\Spec{Spec}

\DeclareMathOperator\Gal{Gal}
\DeclareMathOperator\Cl{Cl}
\DeclareMathOperator\Sqrt{Sqrt}
\DeclareMathOperator\Sqct{SqCt}
\DeclareMathOperator\ram{ram}

\DeclareMathOperator\Out{Out}
\newcommand{\BAR}[1]{{\overline{\mathbb{#1}}}}
\newcommand{\GQ}{\Gal(\overline{\mathbb{Q}}/\mathbb{Q})}

\newcommand{\etpi}[1]{\pi_1^{\text{\'{e}t}}\left(#1\right)}
\newcommand{\base}{\overrightarrow{01}}
\newcommand{\FreeTwo}{{F_2}}
\newcommand{\GT}{\widehat{GT}}

\usepackage[small,nohug,heads=vee,PostScript]{diagrams} \diagramstyle[labelstyle=\scriptstyle]

\usepackage{calc,enumitem}
\newenvironment{casework}{
    \begin{enumerate}[labelindent=15pt,labelwidth=\widthof{Case 9: },leftmargin=!]
    }
{\end{enumerate}}
\newenvironment{subcasework}{
    \begin{enumerate}[labelindent=-15pt,labelwidth=\widthof{Subcase 9.9: },leftmargin=!]
    }
{\end{enumerate}}

\title{A new \texorpdfstring{$\GQ$-}{Galois }invariant of dessins d'enfants}
\author{Ravi Jagadeesan}
\address{Harvard College, 1 Oxford St, Cambridge, MA 02138}
\email{ravi.jagadeesan@gmail.com}

\begin{document}

\begin{abstract}
We study the action of $\GQ$ on the category of Belyi functions
(finite, \'{e}tale covers of $\mathbb{P}^1_{\BAR{Q}}\setminus \{0,1,\infty\}$).
We describe a new combinatorial $\GQ$-invariant for Belyi functions whose monodromy cycle types above $0$ and $\infty$ are the same.  We use a version of our invariant to prove that $\GQ$ acts faithfully on the set of Belyi functions whose monodromy cycle types above 0 and $\infty$ are the same; the proof of this result involves a version of Belyi's Theorem for odd degree morphisms.
Using our invariant, we obtain that for all $k < 2^{\sqrt{\frac{2}{3}}}$ and
all positive integers $N$, there is an $n \le N$ such that
the set of degree $n$ Belyi functions of a particular rational Nielsen class must split
into at least $\Omega\left(k^{\sqrt{N}}\right)$ Galois orbits.
\end{abstract}

\maketitle

%\doublespacing

\section{Introduction}
In his \emph{Esquisse d'un Programme}~\cite{Esq}, Grothendieck described a research
program to understand the structure of $\GQ$.  One idea is that there is a faithful,
outer action of $\GQ$ on the Teichm\"{u}ller tower of profinite mapping class groups (the \'{e}tale fundamental
groups of the moduli spaces $M_{g,n}$ of curves of genus $g$ with $n$ ordered marked points
over $\BAR{Q}$).  Grothendieck conjectured that the group of outer automorphisms of the Teichm\"{u}ller tower is in fact isomorphic to $\GQ$, and that the action is ``generated" on the dimension 1 moduli spaces with
``relations" in dimension 2.  The moduli space $M_{0,4}$ is of dimension 1,
and is isomorphic to $\mathbb{P}^1_\BAR{Q} \setminus \{0,1,\infty\}$, and therefore as part of the program,
one wishes to study the action of $\GQ$ on the category of \'{e}tale covers of
$\mathbb{P}^1_\BAR{Q} \setminus \{0,1,\infty\}$.  Grothendieck's
\emph{dessins d'enfants} encode the covers combinatorially,
and one can try to understand the faithful action of $\GQ$ on them.  A first step is to determine a set of
invariants, perhaps algebraic, arithmetic, geometric, or topological in nature,
that can distinguish distinct $\GQ$-orbits of dessins.  In this paper, we construct
a new invariant for Belyi functions whose monodromy cycle types above 0 and $\infty$ are the same.

The key idea is to consider commutative squares of the form
\[\begin{diagram}
Y & \lTo & X\\
\dTo & & \dTo\\
\mathbb{P}^1 & \lTo_{t=\frac{4z}{(z+1)^2}} & \mathbb{P}^1
\end{diagram}\]
with $X$ the normalization of the fibered product $Y \times_{\mathbb{P}^1} \mathbb{P}^1$.
In certain cases, $\GQ$-invariants of the left morphism extend to $\GQ$-invariants of the right morphism.
By considering the cycle types of the monodromy generators
of the left morphism as a $\GQ$-invariant, we partition the set of possible right
morphisms into $\GQ$-invariant subsets.  We describe this new invariant combinatorially
as the \emph{square-root cycle type class}.
It can help distinguish $\GQ$-orbits of Belyi functions
that have the same monodromy cycle type over $0$ and $\infty$.  In
Theorems~\ref{thm:ClLowerBound} and~\ref{thm:Cl'LowerBound}, we prove that
our invariant is substantially finer than the rational Nielsen class (and therefore
substantially finer than the monodromy group and the monodromy cycle type).
In particular, we prove that for all $k < 2^{\sqrt{\frac{2}{3}}}$ and
all positive integers $N$, there is an $n \le N$ such that
the set of degree $n$ Belyi functions of a particular rational Nielsen class must split
into at least $\Omega\left(k^{\sqrt{N}}\right)$ Galois orbits.

By varying $Y$ over curves of genus 1 in an appropriate manner, we establish in Corollary~\ref{cor:GQFaithfulOn0InftySames} that the action of $\GQ$ is faithful on the set of Belyi functions whose monodromies above 0 and $\infty$ are the same.  The proof uses the properties of our $\GQ$-invariant of Belyi functions.  An intermediate step requires us to construct odd-degree Belyi functions, which we do in Theorem~\ref{thm:OddDegreeBelyi} by adjusting Belyi's first proof of his celebrated theorem.

Nakamura and Schneps~\cite[Theorem 2.2]{NakamuraSchneps} derived a constraint on the image of $\GQ$ in $\GT$ using the fact that $t = \frac{4f}{(f+1)^2}$ is defined over $\mathbb{Q}$. Our commutative squares can be reinterpreted as pulling back
\'{e}tale covers of a genus 0 smooth one-dimensional Deligne-Mumford
stack to $\mathbb{P}^1 \setminus \{0,1,\infty\}$, and therefore can be considered a reinterpretation of \cite[Theorem 2.2]{NakamuraSchneps}.

The structure of this paper is as follows.  In Section~\ref{sec:Previous},
we recall the basic definitions and discuss previous work.  In Section~\ref{sec:Statements},
we state our main results, and in Section~\ref{sec:ProofProp}, we prove the basic properties
of our new invariant. In Section~\ref{sec:ProofMainLower}, we prove that our invariant is stronger than the rational
Nielsen class invariant in certain cases.  In Section~\ref{sec:FaithfullnessProofs}, we prove that the action of $\GQ$ is faithful on the class of Belyi functions under consideration, and in Section~\ref{sec:Conclusion}, we
give concluding remarks and state an open problem.  Elementary computations are deferred
to Appendix~\ref{app:CompLemmata}.

\subsection*{Acknowledgements}
This research was done in the MIT Math Department's PRIMES program.
The author would like to thank Akhil Mathew for his incredibly helpful
insight and guidance that influenced this work.
The author would also like to thank Noam Elkies for proposing
this project and offering numerous useful observations, such as
suggesting that we consider fibered products of curves, suggesting
the proof of Theorem~\ref{thm:PropOfSqct}(c), and suggesting that we apply
Proposition~\ref{prop:p,22}. The author would also like to Pavel Etingof and Kirsten Wickelgren for helpful discussions, as well as the anonymous referee for numerous helpful suggestions.

\section{Notation and Previous Work}
\label{sec:Previous}
Unless otherwise specified,
a curve will mean a smooth, irreducible, projective, algebraic curve over $\mathbb{C}$, or equivalently
a compact Riemann surface.  We will denote
by $\mathbb{P}^1$ the complex projective line $\mathbb{P}^1_{\mathbb{C}}$.
Fix an embedding $\BAR{Q} \hookrightarrow \mathbb{C}$.

Fundamental groups are topological unless otherwise specified.
We fix a generating set $x_0,x_1,x_\infty$ of $\pi_1\left(\mathbb{P}^1\setminus \{0,1,\infty\},\base\right)$
such that $x_0x_1x_\infty = 1$ in Figure~\ref{fig:P1Minus3}: the loops have winding numbers of $1,0,-1$
about 0 and $0,1,-1$ about 1, respectively.
Sending the generators $x,y$ of $\FreeTwo$, the free group on two letters, to $x_0,x_1$, respectively, yields an isomorphism
$\FreeTwo \cong \pi_1\left(\mathbb{P}^1\setminus \{0,1,\infty\},\base\right)$.
\begin{figure}
\includegraphics[width=4in]{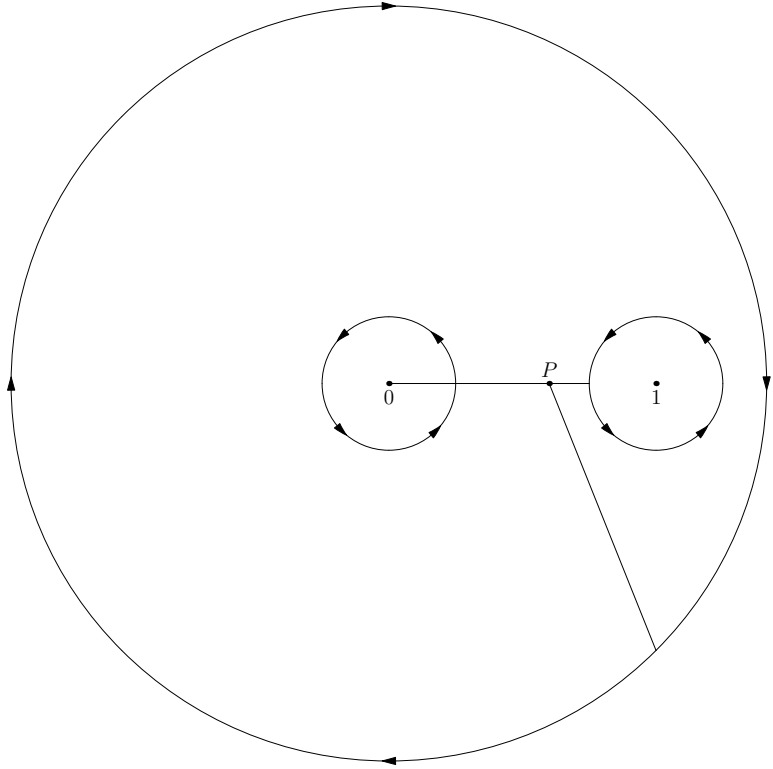}
\caption{{\bf Generators for $\pi_1\left(\mathbb{P}^1\setminus \{0,1,\infty\},\protect\base\right).$} The base-point is the tangent vector $\protect\base$ 
at 0.  The homotopy class $x_0$ is given by moving from 0 toward 1, in the counterclockwise around 0, and back to 0 along the segment between 0 and 1.
The homotopy class $x_1$ is defined similarly.  The homotopy class $x_\infty$ is defined by moving from 0 to $P$ along the segment, traversing the segment from $P$ to the large circle, moving around the large circle clockwise, returning to $P$, and then returning to 0 along the segment.  
It is evident that $x_0x_1x_\infty = 1$.
}
\label{fig:P1Minus3}
\end{figure}

By a \emph{weak action} of a group $G$ on a category $\mathcal{C}$, we mean a group
homomorphism from $G$ to the group of equivalences from $\mathcal{C}$ to $\mathcal{C}$,
modulo natural isomorphism.  Let $\widehat{G}$ denote the profinite completion of a group $G$.

For a positive integer $n$, let $[n] = \{1,2,\ldots,n\}$.
We write $\psi \dashv n$ if $\psi$ is a partition of $n$, by which we mean a non-increasing sequence of positive integers that sum to $n$ (for example, $(2,2,1) \dashv 5$).  Given $\psi \dashv n$, let the \emph{ramification number}
of $\psi$, which we denote by $\ram(\psi)$, equal $n - k$, where
$k$ is the number of non-empty parts of $\psi$ (i.e. the length of the sequence of positive integers).  We can extend the definition of $\ram$ to
permutations $\sigma \in S_n$ by defining the ramification number of $\sigma$ to be
the ramification number of the cycle type of $\sigma$.

\subsection{The \texorpdfstring{action of $\GQ$}{Galois action} on profinite fundamental groups}
Let $\overline{p}$ be a geometric (potentially tangential in the sense of Deligne~\cite[\S15]{DeligneP1Minus3}) point of $\mathbb{P}^1_\BAR{Q} \setminus \{0,1,\infty\}$, let
$p$ be the corresponding geometric point of $\mathbb{P}^1_\mathbb{Q}$,
and let $p_\mathbb{C}$ be the base-change of $\overline{p}$ to $\mathbb{C}$.
There is an isomorphism between \'{e}tale fundamental groups and
profinite completions of topological fundamental groups~\cite[Expos\'{e} X, Corollaire 1.8]{SGAI}:
\[
\etpi{\mathbb{P}^1_\BAR{Q} \setminus \{0,1,\infty\},\overline{p}} \cong \etpi{\mathbb{P}^1_\mathbb{C} \setminus \{0,1,\infty\},p_\mathbb{C}}
\cong \widehat{\pi_1\left(\mathbb{P}^1_\mathbb{C} \setminus \{0,1,\infty\},p_\mathbb{C}\right)} \cong \widehat{\FreeTwo},\]
where the first two isomorphisms are canonical and the last given by our choice of generators
for $\pi_1(\mathbb{P}^1_\mathbb{C},p_\mathbb{C})$.
Furthermore, there is a homotopy exact sequence of \'{e}tale fundamental groups~\cite[Expos\'{e} IX, Th\'{e}or\`{e}me 6.1]{SGAI}:
\begin{equation}
\label{eq:HomotopySeq}
1 \to \etpi{\mathbb{P}^1_\BAR{Q} \setminus \{0,1,\infty\},\overline{p}} \to \etpi{\mathbb{P}^1_{\mathbb{Q}} \setminus \{0,1,\infty\},p} \to \GQ \to 1,
\end{equation}
which splits if $p$ is $\mathbb{Q}$-rational.
This induces an outer action
\begin{equation}\GQ \to \Out\left(\widehat{\FreeTwo}\right), \label{eq:OuterAction}\end{equation}
which is canonical~\cite[\S 3.2]{SchnepsGTIntro}.

The scheme $\mathbb{P}^1_\mathbb{Q} \setminus \{0,1,\infty\}$ can be replaced by any quasi-compact,
geometrically connected scheme $X$ over $\mathbb{Q}$ and $\mathbb{P}^1_\BAR{Q} \setminus \{0,1,\infty\}$
(resp. $\mathbb{P}^1_\mathbb{C} \setminus \{0,1,\infty\}$) by the base-change of $X$ to
$\BAR{Q}$ (resp. $\mathbb{C}$), but the choice of $\mathbb{P}^1_\mathbb{Q} \setminus \{0,1,\infty\}$ has special
properties, such as Theorem~\ref{thm:Belyi}, to be outlined in the next subsection.

\subsection{Belyi functions and dessins d'enfants}
A \emph{Belyi function} is a finite, \'{e}tale, connected cover of $\mathbb{P}^1_\BAR{Q} \setminus \{0,1,\infty\}$.
Due to~\cite[Expos\'{e} X, Corollaire 1.8]{SGAI},
we can equivalently view a Belyi function as a finite, \'{e}tale, connected cover of
$\mathbb{P}^1_{\mathbb{C}} \setminus \{0,1,\infty\}$,
which is a meromorphic function on a curve $X$ that is unbranched outside $\{0,1,\infty\}$.  A \emph{dessin d'enfant}
is a bipartite, connected graph $G$ with parts $V_0,V_1$ together with an embedding
$G \hookrightarrow X$ where $X$ is a compact, oriented, topological
2-manifold, whose image is the 1-skeleton of a CW-complex structure on $X$.

The following data are then equivalent~\cite{SchnepsDessinsSphere}:
\begin{enumerate}
\item an isomorphism class of Belyi functions of degree $n$;
\item an isomorphism class of dessin d'enfants with $n$ edges; and
\item a conjugacy class of transitive representations \[(\FreeTwo \cong) \, \pi_1(\mathbb{P}^1\setminus\{0,1,\infty\},\base) \rightarrow S_n.\]
\end{enumerate}
To a Belyi function $f$, we associate the dessin $f^{-1}([0,1])$ with
$V_0 = f^{-1}(0)$ and $V_1 = f^{-1}(1)$, and the
monodromy representation of $h: \FreeTwo \cong \pi_1(\mathbb{P}^1\setminus \{0,1,\infty\},\base) \rightarrow
S_n$.  It follows from the Riemann Existence Theorem that one can associate
a Belyi function to any dessin or transitive permutation representation $\FreeTwo \rightarrow S_n$.

There is a natural action of $\GQ$ on the category of Belyi functions: viewing
the category of Belyi functions as the category of \'{e}tale covers of $\mathbb{P}^1_{\BAR{Q}} \setminus \{0,1,\infty\}$
and given an automorphism $\sigma \in \GQ$, we can base-change by $\Spec \sigma$.  There
is an action of $\GQ$ on the category of representations of $\widehat{\FreeTwo}$ on finite
sets where $\sigma \in \GQ$ acts by sending $h$ to $h \circ \alpha(\sigma)$; the image of $h$ is defined
only up to isomorphism because $\GQ$ acts canonically only by outer automorphisms.
The category of Belyi functions is equivalent to the category of representations of $\FreeTwo$
on finite sets, (where $\FreeTwo$ is identified with $\pi_1\left(\mathbb{P}^1 \setminus \{0,1,\infty\},\base\right)$)
which is in turn equivalent to the category of representations of $\widehat{\FreeTwo}$ on finite sets
and therefore Equation~\ref{eq:OuterAction} yields a weak action of $\GQ$ on the category of Belyi functions.
The fact that the two actions are equivalent follows from the definition of the exact sequence
in Equation~\ref{eq:OuterAction}, and the fact that the group of isomorphism classes of self-equivalences
of the category of representations of $\widehat{\FreeTwo}$ on finite sets is canonically isomorphic
to $\Out\left(\widehat{\FreeTwo}\right)$.

A key result regarding the action of $\GQ$ follows from following theorem of Belyi.
\begin{thm}[\cite{BelyiOrig}, Theorem 4]
\label{thm:Belyi}
A curve admits a Belyi function if it is defined over $\overline{\mathbb{Q}}$.
\end{thm}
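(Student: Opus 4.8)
The plan is to carry out Belyi's original argument: starting from an arbitrary finite cover of $\mathbb{P}^1$, one post-composes with carefully chosen rational maps $\mathbb{P}^1\to\mathbb{P}^1$ so as to shrink its branch locus step by step down to $\{0,1,\infty\}$. Fix a model $X_0$ of the curve over $\BAR{Q}$; any non-constant element of $\BAR{Q}(X_0)$ gives a finite morphism $f_0\colon X_0\to\mathbb{P}^1_{\BAR{Q}}$, \'{e}tale away from its finite branch locus $S_0\subset\mathbb{P}^1(\BAR{Q})$. It suffices to produce a finite morphism $g\colon X_0\to\mathbb{P}^1_{\BAR{Q}}$ with branch locus contained in $\{0,1,\infty\}$, for then the restriction of $g$ over $\mathbb{P}^1\setminus\{0,1,\infty\}$ is a finite \'{e}tale cover, connected because $X_0$ is geometrically irreducible. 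The only general fact used along the way is that for finite morphisms $h\colon\mathbb{P}^1\to\mathbb{P}^1$ and $f\colon X_0\to\mathbb{P}^1$, multiplicativity of ramification indices yields $\operatorname{branch}(h\circ f)\subseteq h(\operatorname{branch}(f))\cup\operatorname{branch}(h)$.

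\emph{Stage 1: reduction to $\mathbb{Q}$-rational branch points.} Suppose the current cover $f$ has a branch point outside $\mathbb{Q}\cup\{\infty\}$; since $\infty$ is rational, it is a finite algebraic number $\alpha$, whose degree $d\ge 2$ over $\mathbb{Q}$ we may take to be maximal among the degrees of the finite branch points. Let $q\in\mathbb{Q}[x]$ be the minimal polynomial of $\alpha$, and replace $f$ by $q\circ f$. Then $\alpha$ and all of its Galois conjugates that happen to be branch points are sent to $0$; the branch points contributed by $\operatorname{branch}(q)$ are the values of $q$ on the zeros of $q'$ (together with $\infty$), which have degree $\le d-1$ since $q'\in\mathbb{Q}[x]$ has degree $d-1$; and every remaining branch point $P$ has $q(P)$ of degree $\le\deg P$. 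Tracking the pair $(d,k)$, with $d$ the maximal degree over $\mathbb{Q}$ among the finite branch points and $k$ the number of finite branch points of that degree, this pair strictly decreases in lexicographic order, so after finitely many steps $d=1$. I expect the one place that needs genuine care to be precisely this termination: one must verify that $q\circ f$ picks up no new branch points of degree $d$ except by collapsing the Galois orbit of $\alpha$, which holds because a conjugate of $q(P)$ has the form $q(P')$ for $P'$ a conjugate of $P$, so orbits of maximal-degree branch points can only shrink.

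\emph{Stage 2: reduction from $\mathbb{Q}$-rational branch points to $\{0,1,\infty\}$.} Now $f$ has branch locus $S\subseteq\mathbb{Q}\cup\{\infty\}$, and we induct on $|S|$. If $|S|\le 3$, a $\mathbb{Q}$-rational M\"{o}bius transformation of the target moves $S$ into $\{0,1,\infty\}$, and $f$ is already a Belyi function. If $|S|\ge 4$, pick four distinct points of $S$, order them cyclically along $\mathbb{R}\cup\{\infty\}$, and apply the $\mathbb{Q}$-rational M\"{o}bius transformation sending the first, third, and fourth of them to $0,1,\infty$ respectively; a short computation shows the second is then sent to some $\lambda\in(0,1)\cap\mathbb{Q}$. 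Writing $\lambda=\tfrac{m}{m+n}$ with $m,n$ positive integers, post-compose with the Belyi polynomial $\phi_{m,n}(x)=\tfrac{(m+n)^{m+n}}{m^{m}n^{n}}\,x^{m}(1-x)^{n}$. Since $\phi_{m,n}'(x)=\tfrac{(m+n)^{m+n}}{m^{m}n^{n}}\,x^{m-1}(1-x)^{n-1}\bigl(m-(m+n)x\bigr)$, the map $\phi_{m,n}$ is ramified only over $0,1,\infty$ and carries $\{0,1,\infty,\lambda\}$ into $\{0,1,\infty\}$ (indeed $\phi_{m,n}(\lambda)=1$). Hence $\operatorname{branch}(\phi_{m,n}\circ f)\subseteq\{0,1,\infty\}\cup\phi_{m,n}(S\setminus\{0,1,\infty,\lambda\})$, a subset of $\mathbb{Q}\cup\{\infty\}$ of cardinality at most $|S|-1$, so the induction closes and yields the desired Belyi function. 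This is Belyi's original construction.
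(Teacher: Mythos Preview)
The paper does not give its own proof of this statement: Theorem~\ref{thm:Belyi} is quoted from Belyi's original paper \cite{BelyiOrig} without argument. Your proposal correctly reproduces Belyi's two-stage proof (minimal polynomials to push branch points into $\mathbb{Q}$, then the polynomials $x^m(1-x)^n$ to collapse four rational points to three), which is exactly the argument the citation points to. The paper does, in Section~\ref{sec:FaithfullnessProofs}, adapt this same two-stage structure to prove the odd-degree variant Theorem~\ref{thm:OddDegreeBelyi}, and your write-up is compatible with that adaptation.

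One small comment on your Stage~1 termination: your justification that ``orbits of maximal-degree branch points can only shrink'' is phrased a bit loosely. The clean statement is simply that $q\in\mathbb{Q}[x]$ implies $q(P)\in\mathbb{Q}(P)$, so $[\mathbb{Q}(q(P)):\mathbb{Q}]\le[\mathbb{Q}(P):\mathbb{Q}]$ for every branch point $P$, while the critical values of $q$ lie in fields of degree at most $\deg q' = d-1$; hence no new degree-$d$ points can appear, and $\alpha$ itself drops to $0$, forcing $(d,k)$ to strictly decrease. Note also that since $f_0$ is only defined over $\BAR{Q}$, the branch locus need not be Galois-stable, so there is no need to invoke Galois orbits at all---the degree bound on each individual point suffices.
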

By considering the action of $\GQ$ on the $j$-invariants of smooth genus 1 curves over $\BAR{Q}$,
it follows the actions of $\GQ$ on $\widehat{\FreeTwo}$, the
category of Belyi functions, and the set of isomorphism classes of dessins are faithful~\cite{SchnepsDessinsSphere}.

\subsection{\texorpdfstring{$\GQ$-}{Galois }Invariants}

Properties of the action of $\GQ$ on $\widehat{\FreeTwo}$ (expressed as constraints on the image of $\GQ$ in the profinite Grothendieck-Teichm\"{u}ller group $\GT$) yield $\GQ$-invariants of dessins d'enfants.  Fix a Belyi function $f: X \rightarrow \mathbb{P}^1$ of degree $n$.  We obtain an associated
dessin $\Gamma \subseteq X$ and a monodromy representation
$h: \pi_1(\mathbb{P}^1\setminus \{0,1,\infty\},\base) \rightarrow S_n$.
Let $\psi_i \dashv n$ denote the cycle type of $\sigma_i = h(x_i)$ for $i \in \{0,1,\infty\}$.
It is evident
that the cycle type of the monodromy $(\psi_0,\psi_1,\psi_\infty)$
is $\GQ$-invariant.  In fact, $\psi_0$ is the degree multiset of $V_0$,
$\psi_1$ is the degree multiset of $V_1$, and $\psi_\infty$ is the multiset
of half the number of edges bounding each face of $\Gamma$~\cite[p.4]{GroThyIntro}.

Another $\GQ$-invariant is the \emph{monodromy group}, defined as the image of
the monodromy representation $h$, which is $\GQ$-invariant by definition
of the action of $\GQ$ on the category of Belyi functions.  A third invariant is the \emph{rational Nielsen class},
which associates a Belyi function $f$ of degree $n$ to the pair
\[\left(G \hookrightarrow S_n ,\left\{\left([\sigma_0^\lambda], [\sigma_1^\lambda], [\sigma_\infty^\lambda]\right) \mid \lambda \in
\hat{\mathbb{Z}}^\times\right\}\right),\]
where $G$ is the monodromy group of $f$ and $[u]$ denotes the conjugacy class of $u$ in $G$, which is defined up to simultaneous conjugation in $S_n$.  Let $\mathcal{N}(n)$ denote the set of rational Nielsen classes of degree of Belyi functions of degree $n$.

There are other combinatorial invariants, such as the Ellenberg's braid group invariant~\cite{Ellenberg}, Wood's Belyi-extending map invariant~\cite{Wood},
and Serre's lifting invariant~\cite[Section 3]{Ellenberg}.  Zapponi~\cite{Zapponi} defined an invariant for plane
trees (equivalently, Belyi polynomials) that is merely a sign $\pm 1$, but that
is particularly interesting in that it is not combinatorial.

\section{Statements of the main results}
\label{sec:Statements}

In Section~\ref{subsec:ThmsGaloisInvariant}, we describe
the basic properties of our $\GQ$-invariant of a certain family of dessins d'enfants.  In Section~\ref{subsec:newBelyiAndFaithful}, we describe a version of Belyi's Theorem and its consequences for the faithfulness of the action of $\GQ$ on Belyi functions whose monodromy cycle types above 0 and $\infty$ are equal.  In Section~\ref{subsec:NielsenWeak}, we give precise statements of our results that the rational Nielsen class and monodromy cycle type are coarse $\GQ$-invariants.  In Section~\ref{subsec:ToolsLowerBounds}, we describe the combinatorial framework we use to apply our $\GQ$-invariant to prove the results of Section~\ref{subsec:NielsenWeak}.

\subsection{A new \texorpdfstring{$\GQ$-}{Galois }invariant for Belyi functions with monodromy of cycle type \texorpdfstring{$(\psi,\mu,\psi)$}{psi,mu,psi}}
\label{subsec:ThmsGaloisInvariant}

In this subsection, we describe a new $\GQ$-invariant of a certain family of
dessins d'enfants.

\begin{defn}
Let $f$ be a Belyi function and let $n = \deg f$.
Suppose that $f$ has monodromy generators $\sigma_0,\sigma_1,\sigma_\infty$,
over $0,1,\infty,$ respectively.  The \emph{square-root class}
of $f$, denoted by $\Sqrt(f)$, is defined as
\[
\Sqrt(f) = \left\{\left(\sigma_0,\tau_1,\tau_1^{-1}\sigma_0^{-1}\right) \in S_n^3 \mid \tau_1^2 = \sigma_1
\text{ and } \sigma_\infty = \tau_1^{-1}\sigma_0\tau_1\right\}\]
modulo simultaneous conjugation in $S_n$.
\end{defn}
Because $\sigma_0,\sigma_1,\sigma_\infty$ are only defined up to simultaneous
conjugation in $S_n$, it makes sense to quotient by simultaneous conjugation.

\begin{rem}
If the monodromy cycle types of $f$ above $0$ and $\infty$ are different, then $\Sqrt(f) = \emptyset$ because $\sigma_0$ and $\sigma_\infty$ are not conjugate in $S_n$.  Even if the monodromy cycle types of $f$ above $0$ and $\infty$ are the same, it may still be the case that $\Sqrt(f) = \emptyset$.  Indeed, by Theorem~\ref{thm:Existence}, a result of Edmonds, Kulkarni, and Stong~\cite{EKS}, there exists a Belyi function $f$ with monodromy of cycle type 7 over $0,\infty$ and 421 over 1.  However, a permutation $\sigma_1$ of cycle type 421 cannot be a square in $S_7$.  Nevertheless, as we will see in the theorems later in this section, this invariant will be useful to us when it is non-trivial.
\end{rem}

\begin{defn}
Let the \emph{square-root cycle type class} of $f$, denoted
by $\Sqct(f)$, be the multiset of triples $(\psi_0,\psi_1,\psi_\infty)$
where $\psi_i$ is the cycle type of $\tau_i$,
for $(\tau_0,\tau_1,\tau_\infty) \in \Sqrt(f).$
\end{defn}
\begin{rem}
We let $\Sqct(f)$ be a multiset in order to ensure that $|\Sqrt(f)| = |\Sqct(f)|$.
\end{rem}

For each positive integer $n$, the action of $\GQ$
on the set of conjugacy classes of representations of $\FreeTwo$
in $S_n$ induces an action of $\GQ$ on the power set of the set
of such representations.  Hence, for all $\sigma \in \GQ$ and all
Belyi functions $f$, one can define $\Sqrt(f)^\sigma$.
A key property of the square-root class is its $\GQ$-equivariance.
This yields a key property of the square-root cycle type class,
which is that it is $\GQ$-invariant, and in certain cases it can distinguish $\GQ$-orbits
of dessins that are indistiguishable by the monodromy group and the rational
Nielsen class.  The square-root cycle type class is a purely combinatorial
invariant, albeit difficult to compute explicitly.  In order
to state the final properties of the square-root cycle type class, we
define the genus of an element of $\Sqct(f)$; for all $\psi = (\psi_0,\psi_1,\psi_\infty)$
with $\psi_i \dashv n$,
let
\[g\left(\psi\right)=
\frac{\sum_{i \in \{0,1,\infty\}} \ram(\psi_i)}{2}-n+1.\]
We can naturally extend $g$ to take arguments that are elements of $S_n$ instead.
If $\sigma_i$ is a permutation of cycle type $\psi_i$ for $i \in \{0,1,\infty\}$,
such that $\sigma_0\sigma_1\sigma_\infty = 1$ and the $\sigma_i$ generate a transitive subgroup of $S_n$,
the Riemann-Hurwitz formula implies that this is simply the genus of the curve $X$ that
admits a Belyi function with monodromy of cycle type $\left(\sigma_0,\sigma_1,\sigma_\infty\right).$

Now, we are prepared to state the key properties of the square root class and the square-root cycle type class.

\begin{thm}[Properties of $\Sqrt$ and $\Sqct$]
\label{thm:PropOfSqct}
The function $\Sqrt$ is $\GQ$-equivariant and thus the function $\Sqct$ is $\GQ$-invariant.
Let $f: X \rightarrow \mathbb{P}^1$ be a Belyi function and suppose that $X$ has genus $g$.  Then,
\begin{enumerate}[label=(\alph*)]
\item $|\Sqct(f)|$ is at most the number of non-trivial involutions
on $X$, and in particular, if $g > 1$, then $|\Sqct(f)| \le 84(g-1)-1$;
\item if there exist odd positive integers $k,c$ and
a triple $(\mu_0,\mu_1,\mu_\infty)\in \Sqct(f)$ such that $\mu_1$ has $c$ parts of size $k$
and no parts of size $2k$, then $\left|\Sqct(f)\right| = 1$;
\item if $g > 1$, then there exists
at most one triple $\psi = \left(\psi_0,\psi_1,\psi_\infty\right) \in \Sqct(f)$
such that $g\left(\psi\right) = 0$.
\end{enumerate}
\end{thm}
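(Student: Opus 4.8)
The plan is to reduce part (c) to the uniqueness of the hyperelliptic involution, via the geometric interpretation of $\Sqrt(f)$ that underlies the $\GQ$-equivariance statement and part (a). Recall that an element $\left(\sigma_0,\tau_1,\tau_1^{-1}\sigma_0^{-1}\right) \in \Sqrt(f)$ is the monodromy of a \emph{connected} Belyi function $g_\psi\colon Y_\psi \to \mathbb{P}^1$ — connected because $\langle\sigma_0,\tau_1\rangle \supseteq \langle\sigma_0,\sigma_1,\sigma_\infty\rangle$ is transitive, using $\sigma_1 = \tau_1^2$ — sitting inside a commutative square whose right-hand morphism is $f$ and with $X$ the normalization of $Y_\psi \times_{\mathbb{P}^1} \mathbb{P}^1$ along $t=\frac{4z}{(z+1)^2}$. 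I would first record two facts. Since $\deg\left(\mathbb{P}^1 \xrightarrow{t} \mathbb{P}^1\right) = 2$ and degrees are multiplicative, $X \to Y_\psi$ has degree $2$; since $t=\frac{4z}{(z+1)^2}$ is Galois with group $\mathbb{Z}/2$, base change together with functoriality of normalization makes $X \to Y_\psi$ a $\mathbb{Z}/2$-quotient, so it has a nontrivial deck involution $\iota_\psi$ with $X/\iota_\psi \cong Y_\psi$. (This $\iota_\psi$ is precisely the involution attached to $\psi$ in the proof of part (a), and distinct elements of the multiset $\Sqct(f)$ yield distinct $\iota_\psi$ there.) Second, $g(\psi)$ is by definition the Riemann--Hurwitz genus of a curve carrying a Belyi function of monodromy cycle type $\psi$; since $\left(\sigma_0,\tau_1,\tau_1^{-1}\sigma_0^{-1}\right)$ is exactly the monodromy of $g_\psi$, that curve is $Y_\psi$, so $g(\psi) = g(Y_\psi)$.

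Given these two facts the argument is immediate. Suppose $\psi, \psi' \in \Sqct(f)$ both have $g(\psi) = g(\psi') = 0$. Then $Y_\psi$ and $Y_{\psi'}$ have genus $0$, so $X/\iota_\psi \cong X/\iota_{\psi'} \cong \mathbb{P}^1$; thus each of $\iota_\psi, \iota_{\psi'}$ is an involution of $X$ with rational quotient, i.e., each exhibits $X$ as hyperelliptic and is its hyperelliptic involution. Since $g = g(X) > 1$, a hyperelliptic curve of genus $\ge 2$ admits a unique degree-$2$ map to $\mathbb{P}^1$ up to $\mathrm{PGL}_2$ — equivalently a unique $g^1_2$, by Clifford's theorem (or Riemann--Roch) — hence a unique hyperelliptic involution, so $\iota_\psi = \iota_{\psi'}$. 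By the injectivity of $\psi \mapsto \iota_\psi$ noted above, $\psi = \psi'$; hence at most one triple in $\Sqct(f)$ has genus $0$.

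The only substantive work is the geometric dictionary of the first paragraph — that $X \to Y_\psi$ is genuinely the $\mathbb{Z}/2$-quotient after normalizing the fibered product, and that $\psi \mapsto \iota_\psi$ is injective on $\Sqct(f)$ — but both are already established for the $\GQ$-equivariance statement and for part (a), so I would cite them rather than reprove them. If one prefers to be self-contained about injectivity: if $\iota_\psi = \iota_{\psi'} =: \iota$ then $Y_\psi = X/\iota = Y_{\psi'}$ as curves, and both maps down to $\mathbb{P}^1$ are the unique factorization through $X/\iota$ of the $\iota$-invariant morphism $X \to \mathbb{P}^1 \xrightarrow{t} \mathbb{P}^1$, so the two Belyi functions literally coincide and $\psi = \psi'$. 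The hyperelliptic-uniqueness input is standard, including in the boundary case $g = 2$, where the $g^1_2$ is the canonical system; the hypothesis $g > 1$ is essential, since in genus $1$ an elliptic curve already carries four involutions with rational quotient.
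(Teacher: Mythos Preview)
Your proof is correct and follows essentially the same approach as the paper: both arguments interpret elements of $\Sqrt(f)$ geometrically via the fibered-product square, observe that the resulting map $X \to Y_\psi$ has degree $2$, and then invoke the uniqueness of the degree-$2$ map to $\mathbb{P}^1$ (equivalently, the hyperelliptic involution) on a curve of genus at least $2$. The paper phrases the final step in terms of the morphisms $\alpha,\alpha'$ differing by an automorphism of $\mathbb{P}^1$ rather than in terms of the involutions $\iota_\psi$, but this is the same argument; your self-contained injectivity paragraph is exactly the paper's argument for part~(a) specialized to this case.
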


\subsection{Belyi functions of odd degree and the action of $\GQ$ on Belyi functions with monodromy of cycle type $(\phi,\mu,\phi)$}
\label{subsec:newBelyiAndFaithful}

In this section, we prove that $\GQ$ acts faithfully on the class of Belyi functions whose monodromy cycle types above 0 and $\infty$ are the same.  The proof relies on the properties of $\Sqrt$.  In particular, we prove the following theorem.

\begin{thm}
\label{thm:sqrtIsPowerful}
Let $\sigma \not= 1 \in \GQ$.  There exists a Belyi function $f$ that has odd degree such that $|\Sqrt(f)| = 1$ and $\Sqrt(f)^\sigma \not= \Sqrt(f)$.
\end{thm}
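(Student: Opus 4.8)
The plan is to start with a nontrivial $\sigma \in \GQ$ and build, by hand, a Belyi function $f$ of odd degree whose square-root class $\Sqrt(f)$ is a singleton that is moved by $\sigma$. The natural source of the motion is the classical fact that $\GQ$ acts faithfully on the $j$-invariants of elliptic curves over $\BAR{Q}$: we can choose an elliptic curve $E$ over $\BAR{Q}$ with $j(E)^\sigma \neq j(E)$. By Belyi's Theorem (Theorem~\ref{thm:Belyi}), $E$ carries a Belyi function $g_E \colon E \to \mathbb{P}^1$, and $g_E^\sigma$ is a Belyi function on $E^\sigma$, which has $j$-invariant $j(E)^\sigma$; hence $g_E$ and $g_E^\sigma$ are not isomorphic as Belyi functions. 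The issue is that a generic $g_E$ has no reason to have odd degree, let alone to be the ``square-root side'' of one of the commutative squares $X \to Y$, $\mathbb{P}^1 \to \mathbb{P}^1$ with $t = 4z/(z+1)^2$ that define $\Sqrt$. So the first step is to pass to a Belyi function on $E$ with controlled combinatorics.

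Here I would invoke the odd-degree refinement of Belyi's Theorem, Theorem~\ref{thm:OddDegreeBelyi}: using it, replace $g_E$ by an odd-degree Belyi function $h \colon E \to \mathbb{P}^1$ on (a twist of) $E$, still with $j$-invariant moved by $\sigma$. Now I want $h$ to be exactly the left morphism $Y \to \mathbb{P}^1$ in a commutative square, with $X \to \mathbb{P}^1$ the right morphism obtained by normalizing $Y \times_{\mathbb{P}^1} \mathbb{P}^1$ along $t = 4z/(z+1)^2$; then $f \colon X \to \mathbb{P}^1$ is the Belyi function whose square-root class we study. The key combinatorial point, which I would extract from the analysis underlying Theorem~\ref{thm:PropOfSqct}, is that the triples in $\Sqrt(f)$ correspond to the non-trivial involutions $\iota$ on $X$ (or on $Y$) compatible with the square — equivalently to the ways of writing the monodromy of $f$ over $1$ as a square together with the conjugation relation between the monodromies over $0$ and $\infty$. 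To force $|\Sqrt(f)| = 1$ I would arrange the monodromy of $h$ over $1$ to satisfy the hypothesis of Theorem~\ref{thm:PropOfSqct}(b): choose $h$ so that its monodromy over one of the branch points has an odd number $c$ of cycles of some odd length $k$ and no cycle of length $2k$. The odd-degree version of Belyi's construction should give us enough freedom to impose such a local condition (e.g.\ by composing with suitable auxiliary polynomials of odd degree that create a single fixed point or an odd prime-length cycle over the relevant point), so that Theorem~\ref{thm:PropOfSqct}(b) yields $|\Sqct(f)| = 1$, hence $|\Sqrt(f)| = 1$.

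Finally, I would show $\Sqrt(f)^\sigma \neq \Sqrt(f)$. Since $\Sqrt$ is $\GQ$-equivariant (Theorem~\ref{thm:PropOfSqct}), $\Sqrt(f)^\sigma = \Sqrt(f^\sigma)$, and $f^\sigma$ is the Belyi function attached to the square over $E^\sigma$ built from $h^\sigma$. The unique element of $\Sqrt(f)$ records, up to simultaneous conjugacy in $S_n$, the monodromy $(\sigma_0, \tau_1, \tau_1^{-1}\sigma_0^{-1})$ of the intermediate cover; from this triple one recovers the cover $Y \to \mathbb{P}^1$ (it is the quotient of $X$ by the involution encoded by $\tau_1$), hence recovers $Y \cong E$ up to isomorphism and thus $j(E)$. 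So if $\Sqrt(f)^\sigma = \Sqrt(f)$, then the corresponding curves $E$ and $E^\sigma$ would be isomorphic over $\BAR{Q}$, contradicting $j(E)^\sigma \neq j(E)$. I expect the main obstacle to be the middle step: carrying out the odd-degree Belyi construction while \emph{simultaneously} (i) keeping the degree odd, (ii) keeping $j$-invariant in the same $\GQ$-orbit as something $\sigma$ moves, and (iii) planting the local cycle structure over $1$ needed to trigger Theorem~\ref{thm:PropOfSqct}(b). Checking that these three requirements can be met together — and that the relevant square $X \to Y$ stays connected so $f$ is genuinely a Belyi function — is where the real work lies; the equivariance bookkeeping at the end is then formal given Theorem~\ref{thm:PropOfSqct}.
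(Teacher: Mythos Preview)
Your proposal is correct and matches the paper's strategy: pick an elliptic curve $E$ with $j(E)^\sigma \neq j(E)$, produce an odd-degree Belyi map on $E$ via Theorem~\ref{thm:OddDegreeBelyi}, take $f = \Sigma(g)$, and use Theorem~\ref{thm:PropOfSqct}(b) to force $|\Sqrt(f)|=1$, then read off $E$ from the unique element of $\Sqrt(f)$ to get $\Sqrt(f)^\sigma \neq \Sqrt(f)$.

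The step you flag as the main obstacle---simultaneously keeping the degree odd, planting the cycle condition of Theorem~\ref{thm:PropOfSqct}(b), and ensuring connectedness of the fibered product---is handled in the paper by one clean device you only gesture at: post-compose the odd-degree Belyi map $g_0 \colon E \to \mathbb{P}^1$ with a \emph{fixed} degree-$5$ Belyi map $t_0 \colon \mathbb{P}^1 \to \mathbb{P}^1$ whose monodromy over $1$ is a single $5$-cycle and which satisfies $t_0^{-1}(0) = \{0,1,\infty\}$. Then $g = t_0 \circ g_0$ is automatically Belyi (all branching of $g_0$ is absorbed over $0$), has odd degree $5k$, and has monodromy of cycle type $(5^k)$ over $1$; since $k$ is odd this is an odd number of $5$-cycles with no $10$-cycles, so Theorem~\ref{thm:PropOfSqct}(b) fires and the same hypothesis guarantees connectedness of $\Sigma(g)$ via Propositions~\ref{prop:RepProduct}(a) and~\ref{prop:RepFiberedProduct}. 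Your endgame argument---recovering $E$ from the unique element of $\Sqrt(f)$ and contradicting $j(E)^\sigma \neq j(E)$---is exactly what the paper does.
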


Recall that $\Sqrt(f) = \emptyset$ if the monodromy cycle types of $f$ above 0 and $\infty$ are different.  The following corollary is then immediate from  the $\GQ$-equivariance of $\Sqrt$ (Theorem~\ref{thm:PropOfSqct}).

\begin{cor}
\label{cor:GQFaithfulOn0InftySames}
The group $\GQ$ acts faithfully on the set of Belyi functions of odd degree whose monodromy cycle types above 0 and $\infty$ are the same.
\end{cor}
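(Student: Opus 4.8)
The plan is to deduce the corollary directly from Theorem~\ref{thm:sqrtIsPowerful} together with the $\GQ$-equivariance of $\Sqrt$ established in Theorem~\ref{thm:PropOfSqct}. Suppose $\sigma \neq 1 \in \GQ$. By Theorem~\ref{thm:sqrtIsPowerful}, there is a Belyi function $f$ of odd degree with $|\Sqrt(f)| = 1$ and $\Sqrt(f)^\sigma \neq \Sqrt(f)$. The first observation I would make is that $\Sqrt(f) \neq \emptyset$, since $|\Sqrt(f)| = 1$; by the Remark immediately following the definition of $\Sqrt$, this forces $\sigma_0$ and $\sigma_\infty$ to be conjugate in $S_n$, i.e.\ the monodromy cycle types of $f$ above $0$ and $\infty$ coincide. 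Thus $f$ lies in the class of Belyi functions under consideration.

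Next I would invoke $\GQ$-equivariance: $\Sqrt(f^\sigma) = \Sqrt(f)^\sigma \neq \Sqrt(f)$, so in particular $f^\sigma \not\cong f$ as Belyi functions, and moreover $f^\sigma$ again has $|\Sqrt(f^\sigma)| = |\Sqrt(f)^\sigma| = 1 \neq 0$ (the action of $\GQ$ on the power set of representations is by bijections, hence preserves cardinality), so $f^\sigma$ also has equal monodromy cycle types above $0$ and $\infty$, and it has the same (odd) degree as $f$ since base-change along $\Spec\sigma$ preserves the degree of a cover. Therefore $\sigma$ acts non-trivially on the set of isomorphism classes of Belyi functions of odd degree whose monodromy cycle types above $0$ and $\infty$ are the same. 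Since $\sigma \neq 1$ was arbitrary, the action of $\GQ$ on this set is faithful.

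There is no real obstacle here — the corollary is a formal consequence of the theorem once one checks that the witness $f$ produced by Theorem~\ref{thm:sqrtIsPowerful} genuinely belongs to the class in question, which is exactly the content of the Remark that $\Sqrt$ is empty off this class. The only points requiring a word of care are (i) that $\Sqrt(f)^\sigma \neq \Sqrt(f)$ implies $f^\sigma \not\cong f$, which holds because $\Sqrt$ is a well-defined function of the isomorphism class of a Belyi function, and (ii) that "odd degree" and "equal cycle types above $0$ and $\infty$" are themselves $\GQ$-stable conditions, so that the whole discussion stays inside the advertised set. All the substantive work — in particular the construction of an odd-degree witness, which is where Theorem~\ref{thm:OddDegreeBelyi} (the odd-degree refinement of Belyi's theorem) enters — is packaged into Theorem~\ref{thm:sqrtIsPowerful}.
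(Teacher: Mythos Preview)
Your proposal is correct and matches the paper's approach exactly: the paper states that the corollary ``is then immediate from the $\GQ$-equivariance of $\Sqrt$'' together with Theorem~\ref{thm:sqrtIsPowerful} and the observation that $\Sqrt(f) = \emptyset$ when the cycle types above $0$ and $\infty$ differ. You have simply written out the details of this immediate deduction, including the check that the witness $f$ and its conjugate $f^\sigma$ both lie in the class under consideration.
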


In the proof of Theorem~\ref{thm:sqrtIsPowerful}, we need to construct Belyi functions of odd degree.  To do so, we prove the following version of Belyi's Theorem.

\begin{thm}
\label{thm:OddDegreeBelyi}
Let $X$ be a curve that is defined over $\BAR{Q}$.  If $X$ admits a non-constant meromorphic function of odd degree that is defined over $\BAR{Q}$, then $X$ admits a Belyi function of odd degree.
\end{thm}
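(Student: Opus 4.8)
The plan is to mimic Belyi's original argument while maintaining control of the parity of the degree at every stage. Recall that Belyi's proof has two parts: first one uses a non-constant morphism $f\colon X \to \mathbb{P}^1$ defined over $\BAR{Q}$ and composes it with carefully chosen polynomials to obtain a morphism ramified only over $\mathbb{Q}$-rational points; then one uses the classical observation that a morphism $\mathbb{P}^1 \to \mathbb{P}^1$ ramified over $\{0,1,\infty,\lambda_1,\dots,\lambda_r\}$ with all $\lambda_i \in \mathbb{Q}$ can be reduced, by composing with the family of maps of the form $z \mapsto \frac{(m+n)^{m+n}}{m^m n^n} z^m (1-z)^n$, to one ramified only over $\{0,1,\infty\}$. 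So I would start with the hypothesized odd-degree meromorphic function $g_0\colon X \to \mathbb{P}^1$ and note that its branch locus $B_0 \subseteq \mathbb{P}^1(\BAR{Q})$ is finite and Galois-stable, hence cut out by a polynomial (or its reciprocal) with $\mathbb{Q}$-coefficients.

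The key step is to check that each operation used in Belyi's proof can be arranged to multiply the degree by an \emph{odd} number (or leave it unchanged). The first reduction — composing with a minimal polynomial $P$ over $\mathbb{Q}$ of an algebraic branch point to shrink the branch locus toward $\mathbb{Q}$-rational points — is the delicate one, since a minimal polynomial can have even degree. Here I would invoke the trick that is already flagged in the excerpt: Theorem~\ref{thm:PropOfSqct}(b) and the remark preceding it suggest that odd-degree phenomena are governed by avoiding ramification of even multiplicity. Concretely, instead of composing with the minimal polynomial $P$ directly, one should compose with an odd-degree polynomial that collapses the conjugates of a given algebraic number to a single rational point — for instance, if $\alpha$ has conjugates $\alpha_1,\dots,\alpha_d$, one can use a polynomial like $\prod_i (z-\alpha_i)$ raised to a suitable power or, better, exploit that the derivative structure lets one add at most one new rational branch point while the degree stays odd if $d$ is odd, and if $d$ is even one first moves to a situation where an odd-degree collapsing map exists. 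I would handle the even-$d$ case by a preliminary base change or by composing with a degree-$3$ map that separates a pair of conjugate points into the picture, accepting a bounded number of new \emph{rational} branch points, which are harmless.

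The second part — clearing rational branch points other than $0,1,\infty$ — is easier to control: the Belyi-type map $z \mapsto \frac{(m+n)^{m+n}}{m^m n^n}z^m(1-z)^n$ has degree $m+n$, and one is free to choose $m,n$ with $m+n$ odd (indeed one of $m,n$ even and the other odd), subject only to the requirement that it sends the offending branch point to $\{0,1\}$ while not reintroducing branch points outside $\{0,1,\infty\}$; the standard choice $\frac{m}{m+n}$ equal to the branch value accomplishes this, and one can always perturb to keep $m+n$ odd. Iterating, one removes the finitely many rational branch points one at a time, each time multiplying the degree by an odd number, until only $\{0,1,\infty\}$ remain. Composing all of these with the (possibly modified) odd-degree $g_0$ yields a morphism $X \to \mathbb{P}^1$ of odd degree, defined over $\BAR{Q}$, unramified outside $\{0,1,\infty\}$ — that is, an odd-degree Belyi function.

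I expect the main obstacle to be the even-$d$ case in the first reduction: when an algebraic branch point has an even number of Galois conjugates, there is no polynomial of odd degree whose critical values collapse all of them simultaneously to a rational number in the most naive way, so one must argue more cleverly — either by composing first with an odd-degree auxiliary map on the source that ramifies appropriately, or by peeling off conjugate pairs using degree-$3$ (or other odd-degree) maps that introduce only rational branch points. Making this precise while verifying that no \emph{irrational} branch points are created in the process (and that transitivity/connectedness of the cover is preserved) is where the real work lies; everything else is a parity bookkeeping exercise on top of Belyi's classical construction.
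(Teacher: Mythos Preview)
Your outline follows the paper's two-stage structure, and you correctly flag the even-orbit case in stage~1 as nontrivial. The paper's solution there is cleaner than your sketch: when the Galois-stable set $S$ of irrational branch points has even cardinality, one multiplies $h(x)=\prod_{s\in S}(x-s)$ by a single linear factor $(x-\alpha)$ with $\alpha\in\mathbb{Q}$ chosen so that the product has a \emph{rational} critical point $\beta$ (solve $(\beta-\alpha)h'(\beta)+h(\beta)=0$ for $\alpha$). This makes the degree $|S|+1$ odd while forcing one root of the derivative to be rational, so the irrational branch locus strictly shrinks.

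The genuine gap is in stage~2, which you dismiss as ``easier to control.'' Your claim that for $z\mapsto c\,z^m(1-z)^n$ ``one can always perturb to keep $m+n$ odd'' is false. To clear a rational branch point $r=p/q$ in lowest terms you need the unique interior critical point $\tfrac{m}{m+n}$ to equal $r$, which forces $(m,n)=(kp,k(q-p))$ and hence $m+n=kq$. If $q$ is even this degree is always even; there is no perturbation available without moving the critical point off $r$, which defeats the purpose. The paper handles the even-denominator case by precomposing with an auxiliary odd-degree (in fact degree~3) rational map $h=g/(g-1)$ with $g(x)=\tfrac{q}{q-p}x^2(x-r)$, whose only new branch value is a rational $r_2$ with a different 2-adic denominator; a short 2-adic case analysis (on whether $4\mid q$) then reduces to the odd-denominator case. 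So the ``real work'' is not only in stage~1 as you predicted, but equally in stage~2.
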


In particular, we apply Theorem~\ref{thm:OddDegreeBelyi} when $X$ is of genus 1 and use the action of $\GQ$ on $j$-invariants in the proof of Theorem~\ref{thm:sqrtIsPowerful}.

\subsection{The monodromy cycle type and the rational Nielsen class are imprecise invariants}
\label{subsec:NielsenWeak}
We use the $\GQ$-invariant of the previous subsection
to prove upper bounds on the precision of previously known
$\GQ$-invariants.

For all positive integers $n$, let
\[\Cl(N) = \max_{n \le N} \max_{\psi_1,\psi_2,\psi_3 \dashv n}
\left(
\begin{array}{c}
\text{number of }\GQ\text{-orbits of Belyi}\\
\text{functions with monodromy of}\\
\text{cycle type } (\psi_1,\psi_2,\psi_3)
\end{array}
\right).
\]
Using the tools of Section~\ref{subsec:ToolsLowerBounds}, we derive the following optimized lower bound.
\begin{thm}
\label{thm:ClLowerBound}
For all positive integers $N$, we have
\[\Cl(N) \ge \frac{1}{16} 2^{\sqrt{\frac{2N}{3}}}.\]
\end{thm}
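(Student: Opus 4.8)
The plan is to produce, for suitable $n$, a large number of Belyi functions of degree $n$ with a fixed monodromy cycle type $(\psi_1,\psi_2,\psi_3)$ that are pairwise non-isomorphic as $\GQ$-orbit representatives, by exploiting Theorem~\ref{thm:PropOfSqct}. The governing idea is that $\Sqct$ is a $\GQ$-invariant, so if a collection of Belyi functions of the same cycle type has pairwise distinct square-root cycle type classes, they lie in distinct $\GQ$-orbits. Thus the combinatorial core of the argument is: exhibit many permutation triples $(\sigma_0,\sigma_1,\sigma_\infty)$ with $\sigma_0\sigma_1\sigma_\infty=1$, transitive, all of the same cycle type $(\psi_1,\psi_2,\psi_3)$, realizing distinct values of $\Sqct$. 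The Riemann Existence Theorem then upgrades each triple to an actual Belyi function, and the counting of distinct $\Sqct$-values gives a lower bound on the number of $\GQ$-orbits, hence on $\Cl(N)$.

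First I would choose the cycle type so that the square-root structure is rich but controllable. A natural choice is $\psi_1 = \psi_3 = \psi$ (so that $\Sqrt$ can be non-empty) with $\psi$ chosen so that $\sigma_0$ and $\sigma_\infty$ are conjugate in a flexible way, and $\psi_2 = \mu$ chosen to be a \emph{square} in $S_n$ with many essentially different square roots $\tau_1$ of distinct cycle types. Concretely one wants $\mu$ to be built out of blocks of the form $(2^a, 1^b)$ or $(2k, 2k)$ versus $(k,k,k,k)$ etc., so that a product of independent ``local'' choices of square root yields exponentially many global cycle types for $\tau_1$; each such choice, together with a compatible $\tau_1$-conjugation intertwining $\sigma_0$ and $\sigma_\infty$, contributes a distinct triple to $\Sqct$. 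To get the numerics $2^{\sqrt{2N/3}}$ one arranges roughly $\sqrt{N}$ independent binary choices inside a degree-$n$ permutation with $n \le N$; the constant $\sqrt{2/3}$ in the exponent (hence the $2^{\sqrt{2N/3}}$) should fall out of optimizing the size of the building block (a block contributing one bit costs about $3$ in degree, or similar) against $n\le N$, and the $\tfrac{1}{16}$ is slack absorbed from transitivity corrections, connecting blocks, and rounding. I would set up the explicit family, verify $\sigma_0\sigma_1\sigma_\infty = 1$ and transitivity (possibly by prepending a small ``stitching'' factor to make the group transitive without disturbing cycle types or the square-root count much), and then count.

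The step I expect to be the main obstacle is ensuring that the exponentially many square-root choices actually produce \emph{distinct} elements of $\Sqct(f)$ — i.e., that different local choices of $\tau_1$ give globally non-conjugate (in $S_n$) triples $(\sigma_0,\tau_1,\tau_1^{-1}\sigma_0^{-1})$, and moreover that one cannot collapse them by also varying the conjugating element. This requires a careful invariant: probably one shows the cycle type of $\tau_1$ alone already distinguishes the cases, which reduces the problem to a clean statement that the relevant block choices yield distinct partitions summing to $n$. One must also guarantee that each candidate $\tau_1$ extends to a genuine element of $\Sqrt(f)$, which demands the existence of a permutation conjugating $\sigma_0$ to $\sigma_\infty$ and compatible with the factorization — here the freedom in choosing $\sigma_0,\sigma_\infty$ within the cycle type $\psi$ is what is being spent, so the construction must balance ``enough rigidity to count $\Sqct$'' against ``enough flexibility to realize all the square roots.'' I would handle the bookkeeping by isolating the per-block contribution as a small lemma (of the type deferred to Appendix~\ref{app:CompLemmata}), proving that $t$ independent blocks give $\ge 2^t$ distinct triples in $\Sqct$, and then choosing $t = \lfloor \sqrt{2N/3}\,\rfloor$ with a block size making the total degree $n \le N$; combined with Theorem~\ref{thm:PropOfSqct} this yields $\Cl(N) \ge \frac{1}{16} 2^{\sqrt{2N/3}}$.
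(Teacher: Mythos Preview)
Your high-level plan is sound---use the $\GQ$-invariance of $\Sqct$ to separate orbits of Belyi functions sharing the cycle type $(\psi,\mu,\psi)$---but the direction of your construction is inverted, and this is a genuine gap. In your third paragraph you frame the main obstacle as ensuring that many square-root choices $\tau_1$ produce ``distinct elements of $\Sqct(f)$'' and ``extend to a genuine element of $\Sqrt(f)$.'' Both phrases refer to a \emph{single} $f$. But populating $\Sqct(f)$ with many elements does nothing to separate Galois orbits: it is the same $f$ throughout. What you need is many \emph{different} $f$'s whose $\Sqct$-multisets are pairwise distinct, and for that it is not enough to know that each $f$ realizes some particular square-root cycle type---you must rule out that two such $f$'s coincide, or that their $\Sqct$-multisets overlap enough to be equal.

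The paper resolves this by running the construction from the square-root side. One builds Belyi functions $g$ first (with monodromy cycle types ranging over the set $M(\psi,\mu)$), then sets $f=\Sigma(g)$ via the fibered-product map $t=\frac{4z}{(z+1)^2}$. The crucial step you are missing is Theorem~\ref{thm:PropOfSqct}(b): by arranging that the cycle type of $\tau_1$ contains an \emph{odd} number of parts of some odd size $k$ and no parts of size $2k$, one forces $|\Sqct(f)|=1$. Once each $\Sqct(f)$ is a singleton, distinct $g$-cycle-types automatically give distinct $\Sqct(f)$'s and hence distinct orbits. This is the content of the Orbit-Splitting Theorem~\ref{thm:OrbitSplitting}. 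Your proposal never invokes part~(b) or any mechanism guaranteeing $|\Sqct(f)|=1$, so the step ``distinct square-root choices $\Rightarrow$ distinct orbits'' is unjustified.

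Two smaller points. First, the paper takes $\psi=(n)$ specifically, so that existence of the required $g$'s is guaranteed by the Edmonds--Kulkarni--Stong result (Theorem~\ref{thm:Existence}); for general $\psi$ existence is an open Hurwitz-type problem, and your ``stitching'' idea would need real work. Second, the counting is not binary per block: for each odd part size $2k-1$ one gets roughly $f_t(k)=\lfloor (4t+2)/(2k-1)\rfloor$ choices (how many pairs of $(2k-1)$-cycles to fuse into $(4k-2)$-cycles), and it is the product $\prod_k f_t(k)>2^{2t}$ against $n_0(t)<6(t+1)^2$ (Lemmata~\ref{lemma:BoundN_0(t)} and~\ref{lemma:BoundProdf}) that yields the exponent $\sqrt{2N/3}$.
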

For a positive integer $N$, let
\[\Cl'(N) = \max_{n \le N} \max_{c \in \mathcal{N}(n)}
\left(
\begin{array}{c}
\text{number of }\GQ\text{-orbits of Belyi}\\
\text{functions with rational Nielsen class } c
\end{array}
\right).
\]
We also prove the following theorem.
\begin{thm}
\label{thm:Cl'LowerBound}
%For all positive integers $N$, we have
%\[\Cl'(N) \ge \frac{1}{32} 2^{\sqrt{\frac{N}{3}}}.\]
For all $k < 2^{\sqrt{\frac{2}{3}}}$, we have
\[\Cl'(N) = \Omega\left(k^{\sqrt{N}}\right).\]
The monodromy groups of the rational Nielsen classes
achieving the given asymptotic inequality can be chosen to be $A_n$.
\end{thm}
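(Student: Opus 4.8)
The plan is to deploy the square-root cycle type class $\Sqct$, which is $\GQ$-invariant by Theorem~\ref{thm:PropOfSqct}, to separate Galois orbits inside a \emph{single} rational Nielsen class. For a sequence of degrees $n\to\infty$ I will build a family $\{f_\lambda\colon X_\lambda\to\mathbb{P}^1\}$ of Belyi functions such that all $f_\lambda$ have monodromy cycle type $(\psi,\mu,\psi)$ for one fixed triple, all have monodromy group $A_n$ and the same rational Nielsen class, $g(X_\lambda)>1$, and $\lambda\mapsto\Sqct(f_\lambda)$ is injective, the index set having size roughly $2^{\sqrt{2n/3}}$. Since $\Sqct$ is $\GQ$-invariant, the $f_\lambda$ then represent pairwise distinct $\GQ$-orbits, and taking $n\le N$ as large as the construction allows gives the bound. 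This is the $A_n$-refinement of the construction behind Theorem~\ref{thm:ClLowerBound}, the extra requirements costing a small constant factor in the degree, which is why the statement is phrased with $k<2^{\sqrt{2/3}}$ rather than with an explicit constant.

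Each $f_\lambda$ will be the pullback of a genus-$0$ Belyi function $g_\lambda$ along $t=\frac{4z}{(z+1)^2}$, so that the monodromy triple of $g_\lambda$ lies in $\Sqrt(f_\lambda)$, which is exactly the compatibility established in Section~\ref{sec:ProofProp}. Fix $r$ distinct small odd integers $\ell_1<\dots<\ell_r$ (for the optimal rate, the first $r$ odd numbers), put $\mu=\prod_{i=1}^r\ell_i^{\,6}$, and index the family by $\lambda=(b_1,\dots,b_r)\in\{0,1,2,3\}^r$. Let $\tau_1^{(\lambda)}$ be obtained from a permutation of cycle type $\mu$ by fusing, for each $i$, $b_i$ disjoint pairs of $\ell_i$-cycles into single $2\ell_i$-cycles; then $(\tau_1^{(\lambda)})^2$ still has cycle type $\mu$, while the cycle type $\mu_1^{(\lambda)}$ of $\tau_1^{(\lambda)}$ determines $\lambda$ injectively, since the part $2\ell_i$ occurs in $\mu_1^{(\lambda)}$ with multiplicity $b_i$ and is distinct from every other part. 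Using the combinatorial framework of Section~\ref{subsec:ToolsLowerBounds} and the existence results behind Theorem~\ref{thm:Existence}, I realize, for each $\lambda$, a connected genus-$0$ Belyi function $g_\lambda$ with monodromy $(\tau_0,\tau_1^{(\lambda)},\tau_\infty^{(\lambda)})$ in which $\tau_0$ has the \emph{fixed} cycle type $\psi=(p,1^{\,n-p})$ for a prime $p$ with $n/2<p\le n-3$, and $\tau_\infty^{(\lambda)}$ is an involution (the number of its transpositions being forced by Riemann--Hurwitz into the admissible range).

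By the defining relations of $\Sqrt$ the pullback $f_\lambda$ has monodromy cycle type $(\psi,\mu,\psi)$, the datum over $\infty$ being a conjugate of $\tau_0$. Its monodromy group is transitive and contained in $A_n$ (the generator over $1$ is a square in $S_n$, hence even, and $\ram(\psi)=p-1$ is even), it is primitive since it contains a $p$-cycle with $p$ prime and $p>n/2$ (for such a cycle one checks directly that no nontrivial block system is preserved), and hence it equals $A_n$ by Jordan's theorem. Because $\psi$ and $\mu$ each have a repeated part none of the three $S_n$-classes splits in $A_n$, and raising to a $\hat{\mathbb{Z}}^\times$-power preserves cycle type, so the rational Nielsen class of $f_\lambda$ is $\bigl(A_n\hookrightarrow S_n,\{(\psi,\mu,\psi)\}\bigr)$, the same for every $\lambda$. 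Now the triple $(\psi,\mu_1^{(\lambda)},\epsilon^{(\lambda)})$, with $\epsilon^{(\lambda)}$ the cycle type of $\tau_\infty^{(\lambda)}$, lies in $\Sqct(f_\lambda)$ and has genus $0$; since $g(X_\lambda)>1$, Theorem~\ref{thm:PropOfSqct}(c) shows it is the \emph{only} genus-$0$ member of $\Sqct(f_\lambda)$, so $\Sqct(f_\lambda)=\Sqct(f_{\lambda'})$ forces $\mu_1^{(\lambda)}=\mu_1^{(\lambda')}$ and thus $\lambda=\lambda'$. (Adjoining to $\mu$ one further odd part of odd multiplicity would instead make $\mu_1^{(\lambda)}$ satisfy the hypothesis of Theorem~\ref{thm:PropOfSqct}(b), giving $|\Sqct(f_\lambda)|=1$ outright.) Therefore the $4^r$ Belyi functions $f_\lambda$, all of rational Nielsen class $\bigl(A_n\hookrightarrow S_n,\{(\psi,\mu,\psi)\}\bigr)$ and monodromy group $A_n$, represent $4^r$ distinct $\GQ$-orbits. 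With $\ell_i$ the first $r$ odd numbers the degree is $n=6r^2$, whence the family has size $4^r=2^{2r}=2^{\sqrt{2n/3}}$; allowing $n$ to grow by an arbitrarily small constant factor if the realizability step demands it, one still gets, for every $k<2^{\sqrt{2/3}}=4^{1/\sqrt 6}$, the inequality $\Cl'(N)\ge 4^{\,r}\ge k^{\sqrt N}$ for the largest admissible $n\le N$ and all large $N$, which is the stated estimate.

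The expected main obstacle is the realizability step: one must exhibit, simultaneously for every $\lambda\in\{0,1,2,3\}^r$, a connected genus-$0$ permutation triple of the prescribed cycle types while keeping $\psi$ of the special shape that pins the pulled-back monodromy group to $A_n$ and keeping $\psi,\mu$ non-splitting. Reconciling the Hurwitz-existence input (Theorem~\ref{thm:Existence}), the $A_n$-generation argument, the involution constraint on $\tau_\infty^{(\lambda)}$, and the inequality $g(X_\lambda)>1$ is the bulk of the work, and how economically it can be done is precisely what controls how closely the exponential rate can approach $2^{\sqrt{2N/3}}$.
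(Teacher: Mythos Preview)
Your overall strategy---pull back genus-$0$ Belyi functions along $t=\frac{4z}{(z+1)^2}$ and use Theorem~\ref{thm:PropOfSqct} to separate the pullbacks---matches the paper's. The key difference is your choice of $\psi$, and that is where the gap lies.

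You take $\psi=(p,1^{n-p})$ with $n/2<p\le n-3$ prime, so that $\psi$ has a repeated part and does not split in $A_n$, giving a single rational Nielsen class. But then none of the three cycle types $(\psi,\mu_1^{(\lambda)},2^a1^{n-2a})$ for $g_\lambda$ is the partition $(n)$, so Theorem~\ref{thm:Existence} (Edmonds--Kulkarni--Stong) does not apply, and the Hurwitz existence problem for such triples is genuinely open in general---the Riemann--Hurwitz parity/inequality is known not to be sufficient when no part equals $n$. You flag this yourself as ``the expected main obstacle,'' but it is not a technicality: without a realizability result for this specific family, the argument does not go through, and no such result is available in the paper or cited literature.

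The paper sidesteps this entirely by taking $\psi=(n)$ with $n$ itself prime (the smallest prime $\ge n_1(t)$, controlled via the Prime Number Theorem). Then $\tau_0=\sigma_0$ is an $n$-cycle and Theorem~\ref{thm:Existence} applies directly, so every element of $M(\psi,\mu)$ is realized. The $A_n$-generation is forced not by a $p$-cycle with fixed points but by inserting two parts of size~$2$ into $\mu$, so that $\sigma_1^{(2t-1)!!}$ is a double transposition; combined with the $n$-cycle $\sigma_0$ and $n$ prime, Proposition~\ref{prop:p,22} gives $A_n$. The cost of $\psi=(n)$ is that $n$-cycles split into two $A_n$-classes, so there are at most two rational Nielsen classes rather than one; the paper simply absorbs the factor $\tfrac12$ into the asymptotic constant. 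The paper also uses Theorem~\ref{thm:PropOfSqct}(b) (via the $n$-cycle Orbit-Splitting Theorem) rather than part~(c), arranging $\mu$ to have one odd part of odd multiplicity.

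In short: your attempt to pin down a unique Nielsen class by choosing $\psi$ non-splitting trades a harmless factor of $2$ for an unresolved existence problem. The paper's choice $\psi=(n)$ with $n$ prime is what makes the whole construction go through.
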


\begin{rem}
Theorem~\ref{thm:ClLowerBound} is not special case of Theorem~\ref{thm:Cl'LowerBound},
because it provides an explicit constant as well as a base of $2^{\sqrt{\frac{2}{3}}}$
instead a base of arbitrarily close to $2^{\sqrt{\frac{2}{3}}}$.
\end{rem}

\subsection{Tools to prove the lower bounds}
\label{subsec:ToolsLowerBounds}
In this subsection, we state the specific consequences of Theorem~\ref{thm:PropOfSqct} that we use to prove the lower bounds
stated in Section~\ref{subsec:NielsenWeak}.  First, we describe a coarse analogue of $\Sqct$.

Let $n$ be a positive integer, and let $\psi, \mu \dashv n$.  We define a set $M(\psi,\mu)$,
of which $\Sqct(f)$ will be a subset for all Belyi functions $f$ of monodromy cycle type $(\psi, \mu, \psi)$.
First, we define an auxiliary set $M'(\psi,\mu)$. Suppose
that $\mu$ has $\ell_i$ parts of size $i$ for all $i$, and let $\psi_0 = n$.
\[M'(\psi, \mu) = \left\{
\begin{array}{l}
(u_0,u_1,\ldots,u_n) \mid \frac{\ell_i}{2} \le u_i \le \ell_i \text{ for }i \text{ and } i=0, u_i = \frac{\ell_i}{2}\\
\text{ for non-zero even }i, r+u_0+u_1+\cdots+u_n-n\\
\text{ is an even integer that is at most }2, \text{and there exists}\\
\text{an odd positive integer } c
\text{ such that } u_c = \ell_c \text{ is odd}\end{array}
\right\},\]
where $r$ is the number of parts of $\psi$.  Given a $(n+1)$-tuple
$u = (u_0,u_1,\ldots,u_n) \in M'(\psi,\mu)$, we associate partitions
$\alpha(u),\beta(u) \dashv n$.
The partition $\alpha(u)$ is defined by having $2u_0-\ell_0$ parts
of size 1 and $\ell_0-u_0$ parts of size 2, and
 $\beta(u)$ is defined by having $2u_k-\ell_k$ parts of size $k$ for $k$ odd,
and $\ell_{\frac{k}{2}}-u_{\frac{k}{2}} + 2u_{k} - \ell_k$ parts of size $k$
for $k$ even.  It is clear that $\alpha(u),\beta(u) \dashv n$.
Let $M(\psi,\mu) = \{(\psi,\beta(u),\alpha(u)) \mid u \in M'(\psi,\mu)\}$.
The constraints on $M'(\psi,\mu)$ are chosen so that elements of
$M(\psi,\mu)$ are \emph{consistent} in that the existence of a Belyi function
with monodromy cycle types given by any element of $M(\psi,\mu)$ would not
violate the Riemann-Hurwitz formula.

One specific application of part (b) of Theorem~\ref{thm:PropOfSqct},
is the following theorem.
\begin{thm}[Orbit-Splitting Theorem]
\label{thm:OrbitSplitting}
Let $n$ be a positive integer and let $\psi, \mu \dashv n$.
Then, there are at least $\left|M(\psi,\mu) \cap \mathcal{B}\right|$
$\GQ$-orbits of Belyi functions with monodromy of cycle type $(\psi,\mu,\psi)$.
\end{thm}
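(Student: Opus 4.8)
The plan is: for each triple $\tau=(\psi,\beta(u),\alpha(u))$ in $M(\psi,\mu)\cap\mathcal{B}$ (with $u\in M'(\psi,\mu)$), build a Belyi function of monodromy cycle type $(\psi,\mu,\psi)$ whose square-root cycle type class is exactly the one-element multiset $\{\tau\}$. Since $\Sqct$ is $\GQ$-invariant (Theorem~\ref{thm:PropOfSqct}) and distinct $\tau$ give distinct classes, this will produce at least $|M(\psi,\mu)\cap\mathcal{B}|$ distinct $\GQ$-orbits. Because $\tau\in\mathcal{B}$, choose a Belyi function $f_Y\colon Y\to\mathbb{P}^1_{t}$ with monodromy $(\sigma_0^Y,\sigma_1^Y,\sigma_\infty^Y)$ of cycle type $\tau$; form $X$, the normalization of $Y\times_{\mathbb{P}^1_{t}}\mathbb{P}^1_{z}$ along the degree-$2$ map $c\colon t=\frac{4z}{(z+1)^2}$, together with the induced map $f_X\colon X\to\mathbb{P}^1_{z}$, as in the commutative squares described in the introduction. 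Recall that $c$ is unramified over $t=0$ (with fibre $\{z=0,\ z=\infty\}$), ramified of index $2$ over $t=1$ at $z=1$ and over $t=\infty$ at $z=-1$, and has deck transformation $z\mapsto 1/z$.

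I would first verify that $f_X$ is a connected Belyi function of monodromy cycle type $(\psi,\mu,\psi)$. The only point at which $f_X$ could ramify outside $\{0,1,\infty\}$ is $z=-1$, and a local analysis of the normalized fibred product over $z=-1$ at a point of $Y$ over $t=\infty$ of ramification index $e$ shows $f_X$ is unramified there exactly because every part of $\alpha(u)$ is at most $2$, i.e. $e\le 2$. For connectedness, $Y\times_{\mathbb{P}^1_{t}}\mathbb{P}^1_{z}$ is reducible if and only if $f_Y$ factors through $c$; such a factorization would force $\sigma_1^Y$ to interchange the two sheets of $c$ and hence to have only even-length cycles, contradicting the requirement in the definition of $M'(\psi,\mu)$ that $\beta(u)$ contain an odd --- in particular positive --- number $\ell_c$ of cycles of the odd length $c$. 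Finally, tracking monodromy through $c$ (choosing base points and base paths so that the loop about $z=0$ maps to the loop about $t=0$ and the loop about $z=1$ maps to the square of the loop about $t=1$, and using $x_0x_1x_\infty=1$), one finds that $f_X$ has monodromy $\bigl(\sigma_0^Y,\,(\sigma_1^Y)^2,\,(\sigma_1^Y)^{-2}(\sigma_0^Y)^{-1}\bigr)$; squaring a permutation of cycle type $\beta(u)$ splits each even part $2j$ into two parts $j$ and keeps each odd part, and the constraints $u_{2j}=\ell_{2j}/2$ built into $M'(\psi,\mu)$ make the outcome have cycle type exactly $\mu$, so the cycle type of $f_X$ is indeed $(\psi,\mu,\psi)$.

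Next I would exhibit $\tau$ inside $\Sqct(f_X)$. Put $\tau_1:=\sigma_1^Y$, so $\tau_1^2=(\sigma_1^Y)^2$ is the monodromy of $f_X$ over $1$. Since every part of $\alpha(u)$ is at most $2$, $\sigma_\infty^Y$ is an involution; combined with $\sigma_0^Y\sigma_1^Y\sigma_\infty^Y=1$ this gives $(\sigma_0^Y\sigma_1^Y)^2=1$, whence
\[(\sigma_1^Y)^{-2}(\sigma_0^Y)^{-1}=(\sigma_1^Y)^{-1}\sigma_0^Y\sigma_1^Y=\tau_1^{-1}\sigma_0^Y\tau_1,\]
so the monodromy of $f_X$ over $\infty$ equals $\tau_1^{-1}\sigma_0^Y\tau_1$, and moreover $\tau_1^{-1}(\sigma_0^Y)^{-1}=(\sigma_1^Y)^{-1}(\sigma_0^Y)^{-1}=\sigma_\infty^Y$. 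Hence $(\sigma_0^Y,\tau_1,\tau_1^{-1}(\sigma_0^Y)^{-1})=(\sigma_0^Y,\sigma_1^Y,\sigma_\infty^Y)\in\Sqrt(f_X)$, contributing the triple $\tau=(\psi,\beta(u),\alpha(u))$ to $\Sqct(f_X)$; geometrically this is the element recording the recovery of $f_Y$ as $X$ modulo the non-trivial involution lifting $z\mapsto 1/z$, which is part of the proof of Theorem~\ref{thm:PropOfSqct}. Now $\beta(u)$ has (by the definition of $M'(\psi,\mu)$, since $u_c=\ell_c$) an odd number $\ell_c$ of parts of the odd size $c$ and no parts of size $2c$, so Theorem~\ref{thm:PropOfSqct}(b) gives $|\Sqct(f_X)|=1$, and therefore $\Sqct(f_X)=\{\tau\}$.

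To conclude: the map $u\mapsto(\psi,\beta(u),\alpha(u))$ is injective on $M'(\psi,\mu)$ --- recover $u_0$ from $\alpha(u)$, recover $u_k$ for odd $k$ from the number of size-$k$ parts of $\beta(u)$, and $u_k=\ell_k/2$ for even $k$ --- so as $u$ ranges over the members of $M'(\psi,\mu)$ with $(\psi,\beta(u),\alpha(u))\in\mathcal{B}$, the Belyi functions $f_X$ all have cycle type $(\psi,\mu,\psi)$ but pairwise distinct one-element square-root cycle type classes; by $\GQ$-invariance of $\Sqct$ they occupy $|M(\psi,\mu)\cap\mathcal{B}|$ pairwise distinct $\GQ$-orbits. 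I expect the main obstacle to be the monodromy bookkeeping across the degree-$2$ base change $c$ and the description of the normalized fibred product near $z=\pm 1$ --- especially the verification that $\Sqct(f_X)$ really contains the triple coming from $f_Y$ --- but this is essentially already established in the proof of Theorem~\ref{thm:PropOfSqct}; the genuinely new ingredients here are the packaging of the argument through $M'(\psi,\mu)$ and the observation that the odd-cycle hypothesis simultaneously forces connectedness of $f_X$ and triggers part (b) of Theorem~\ref{thm:PropOfSqct}.
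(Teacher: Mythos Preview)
Your proposal is correct and follows essentially the same route as the paper: for each $(\psi,\beta(u),\alpha(u))\in M(\psi,\mu)\cap\mathcal{B}$ you take a Belyi function $f_Y$ of that cycle type, form $f_X=\Sigma(f_Y)$ via the degree-$2$ base change $t=\frac{4z}{(z+1)^2}$, check that $f_X$ is a connected Belyi function of cycle type $(\psi,\mu,\psi)$, and then invoke Theorem~\ref{thm:PropOfSqct}(b) to pin down $\Sqct(f_X)=\{(\psi,\beta(u),\alpha(u))\}$, so that the $\GQ$-invariance of $\Sqct$ separates the orbits. The paper packages the verification that $f_X$ is unramified outside $\{0,1,\infty\}$, that it is connected, and that $(\sigma_0^Y,\sigma_1^Y,\sigma_\infty^Y)\in\Sqrt(f_X)$ into Propositions~\ref{prop:MonodromyProduct}, \ref{prop:RepProduct}(a), \ref{prop:RepFiberedProduct}, and Theorem~\ref{thm:SigmaSqrt}(b); you instead redo these checks inline (the local analysis at $z=-1$, the factorization criterion for reducibility of the fibred product, and the direct verification of the $\Sqrt$ condition), but the content is the same.
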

\begin{rem}
\label{rem:OrbitSplittingImpliesExistence}
In particular, we prove the existence of Belyi functions with monodromy of cycle type $(\psi,\mu,\psi)$ in the case when $\left|M(\psi,\mu) \cap \mathcal{B}\right| > 0$.
\end{rem}

An existence result for Belyi functions, due to Edmonds, Kulkarni, and Stong~\cite{EKS},
yields the following corollary.

\begin{cor}[$n$-cycle Orbit-Splitting Theorem]
\label{thm:nCycleOrbitSplitting}
If $\psi = n \dashv n$, then $M(\psi,\mu) \subseteq \mathcal{B}$.  Hence,
if $\mu \dashv n$, then there are at least $\left|M'(\psi,\mu)\right|$
$\GQ$-orbits of Belyi functions with monodromy of cycle type $(n,\mu,n)$.
\end{cor}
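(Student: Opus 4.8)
The assertion has two halves: the inclusion $M(\psi,\mu)\subseteq\mathcal B$ when $\psi = n\dashv n$, and the resulting orbit count. The plan is to extract the first half from the branched-cover realizability theorem of Edmonds, Kulkarni and Stong~\cite{EKS}, and then to deduce the second from the Orbit-Splitting Theorem (Theorem~\ref{thm:OrbitSplitting}) together with a short count establishing $|M(\psi,\mu)| = |M'(\psi,\mu)|$.

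For the inclusion, unwind the definitions: a typical element of $M((n),\mu)$ is a triple $((n),\beta(u),\alpha(u))$ with $u\in M'((n),\mu)$, and, as noted just before Theorem~\ref{thm:OrbitSplitting}, the constraints defining $M'$ make this triple \emph{consistent}, i.e.\ its Riemann--Hurwitz genus $g((n),\beta(u),\alpha(u))$ is a non-negative integer. Since the first coordinate is the single $n$-cycle, any branched cover of $\mathbb{P}^1$ realizing this datum is totally ramified over the corresponding point; in particular the monodromy generator there is an $n$-cycle, hence transitive, so any such cover is automatically connected. I would then invoke the realizability theorem of Edmonds, Kulkarni and Stong: a Riemann--Hurwitz-consistent branch datum over $\mathbb{P}^1$ one of whose partitions is the full cycle $(n)$ is realized by a branched cover. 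Combining these gives a connected branched cover of $\mathbb{P}^1$, i.e.\ a Belyi function, with monodromy cycle type $((n),\beta(u),\alpha(u))$, so that triple lies in $\mathcal B$; as $u$ was arbitrary, $M((n),\mu)\subseteq\mathcal B$.

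For the count, observe that the assignment $\Phi\colon u\mapsto(\psi,\beta(u),\alpha(u))$ is a bijection from $M'((n),\mu)$ onto $M((n),\mu)$: surjectivity is the definition of $M$, and injectivity follows because the part-multiplicities of $\alpha(u)$ recover $u_0$ and the multiplicities of the odd parts of $\beta(u)$ recover $u_k$ for odd $k$, while $u_k$ for even $k$ is already fixed by $\mu$. Hence $|M((n),\mu)| = |M'((n),\mu)|$, and by the inclusion already proved this equals $|M((n),\mu)\cap\mathcal B|$. Theorem~\ref{thm:OrbitSplitting} applied with $\psi = (n)$ then yields at least $|M((n),\mu)\cap\mathcal B| = |M'((n),\mu)|$ $\GQ$-orbits of Belyi functions with monodromy of cycle type $(n,\mu,n)$; the statement is vacuous if $M'((n),\mu)=\emptyset$, and otherwise Remark~\ref{rem:OrbitSplittingImpliesExistence} supplies the existence of such Belyi functions.

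The substantive input is imported: the Orbit-Splitting Theorem---and through it Theorem~\ref{thm:PropOfSqct}(b)---does the real work, and the single new ingredient here is the Edmonds--Kulkarni--Stong realizability result. I therefore expect the main obstacle to be isolating exactly the form of their theorem that is needed, namely that a totally ramified branch point makes Riemann--Hurwitz consistency \emph{sufficient} for (automatically connected) realizability over $\mathbb{P}^1$, and then checking that every element of $M((n),\mu)$ satisfies its hypotheses; the remaining bookkeeping, notably $|M((n),\mu)| = |M'((n),\mu)|$, is routine.
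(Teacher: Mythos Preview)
Your proposal is correct and follows the same route as the paper: invoke the Edmonds--Kulkarni--Stong result (Theorem~\ref{thm:Existence}) to see that the Riemann--Hurwitz-consistent triples in $M((n),\mu)$ lie in $\mathcal{B}$, then feed the inclusion into Theorem~\ref{thm:OrbitSplitting}. The paper's argument is a single sentence at the end of Section~\ref{subsec:OrbitSplProof}; you are more explicit about the bijection $M'((n),\mu)\to M((n),\mu)$ needed to convert the bound $|M((n),\mu)\cap\mathcal{B}|$ of Theorem~\ref{thm:OrbitSplitting} into the stated $|M'((n),\mu)|$, a detail the paper leaves implicit.
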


In certain cases, the constraint that $\mu_c = \ell_c$ is odd for some odd $c$
in the definition of $M'(\psi,\mu)$ is restrictive, in that
there are $\psi,\mu$ for which the Orbit-Splitting Theorem~\ref{thm:OrbitSplitting} gives weak bounds
on the number of $\GQ$-orbits of Belyi functions with monodromy
of cycle type $(\psi,\mu,\psi)$.  We prove an alternate form
that applies even in those cases, but is weaker in other cases.  For example,
consider $n = 11$, $\psi = 11$ and $\mu = 2222111$.
The $n$-cycle Orbit-Splitting Theorem implies that there are at least zero $\GQ$-orbits
of Belyi functions with monodromy of cycle type $(\psi,\mu,\psi)$;
the alternate form will imply that there are at least two $\GQ$-orbits.

Once again, let $n$ be a positive integer, and let $\psi, \mu \dashv n$.
Suppose that $\mu$ has $\ell_i$ parts of size $i$ for all $i$, and let $\psi_0 = n$.  Let
\[M'_0(\psi, \mu) = \left\{
\begin{array}{l}
(u_0,u_1,\ldots,u_n) \mid \frac{\ell_i}{2} \le u_i \le \ell_i \text{ for odd }i \text{ and } i=0,\\
\text{there exists an odd } i \text{ with } u_i \not= \frac{\ell_i}{2},
u_i = \frac{\ell_i}{2} \text{ for}\\
\text{non-zero even }i, \text{ and }r+u_0+u_1+\cdots+u_n = n + 2\\
\end{array}
\right\},\]
where $r$ is the number of parts of $\psi$.  Define
$M_0(\psi,\mu) = \{(\psi,\beta(u),\alpha(u)) \mid u \in M'_0(\psi,\mu)\}.$
We prove the following analogue of the Orbit-Splitting Theorem,
which follows from Theorem~\ref{thm:PropOfSqct}(c).

\begin{thm}[Orbit-Splitting Theorem, Alternate Form]
\label{thm:OrbitSplittingAlt}
Let $n$ be a positive integer and let $\psi, \mu \dashv n$.
Suppose that $\psi$ has $r$ parts and $\mu$ has $s$ parts,
and $2r+s < n$. Then, there are at least
$\left|M_0(\psi,\mu) \cap \mathcal{B}\right|$; and
$\GQ$-orbits of Belyi functions with monodromy of cycle type $(\psi,\mu,\psi)$.
\end{thm}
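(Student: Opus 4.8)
The plan is to reduce the Alternate Form to the Orbit-Splitting Theorem's underlying mechanism, with Theorem~\ref{thm:PropOfSqct}(c) replacing part (b) as the source of distinctness. First I would recall the key point behind Theorem~\ref{thm:OrbitSplitting}: for each $u \in M'(\psi,\mu)$, one shows that there exists a Belyi function $f_u$ with monodromy cycle type $(\psi,\mu,\psi)$ admitting a ``square root'' whose cycle type triple is exactly $(\psi,\beta(u),\alpha(u)) \in M(\psi,\mu)$; part (b) of Theorem~\ref{thm:PropOfSqct} forces $|\Sqct(f_u)| = 1$, so $\Sqct(f_u) = \{(\psi,\beta(u),\alpha(u))\}$, and since distinct $u$ give distinct triples and $\Sqct$ is $\GQ$-invariant, the $f_u$ lie in distinct $\GQ$-orbits. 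I would run the same argument here: for each $u \in M'_0(\psi,\mu) \cap \mathcal{B}$ (more precisely, for each $u$ such that the associated triple lies in $\mathcal{B}$), construct $f_u$ with monodromy $(\psi,\mu,\psi)$ and a square root realizing $(\psi,\beta(u),\alpha(u))$. The constraint $r + u_0 + \cdots + u_n = n + 2$ defining $M'_0$ is precisely the Riemann--Hurwitz condition forcing the square-root curve to have genus $g\bigl((\psi,\beta(u),\alpha(u))\bigr) = 0$, i.e. the square root is a genus-$0$ object; and the hypothesis $2r + s < n$ should guarantee (via Riemann--Hurwitz on $X$ itself) that $X$ has genus $g > 1$, so that Theorem~\ref{thm:PropOfSqct}(c) applies.

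The next step is the distinctness argument. Given $u \neq u'$ in $M'_0(\psi,\mu)$ with both triples in $\mathcal{B}$, I must show $f_u$ and $f_{u'}$ are in different $\GQ$-orbits. Here is where part (c) does the work: since $g > 1$, there is \emph{at most one} triple $\psi' \in \Sqct(f_u)$ with $g(\psi') = 0$. The triple $(\psi,\beta(u),\alpha(u))$ has genus $0$ by construction, so it is \emph{the} genus-$0$ element of $\Sqct(f_u)$, and it is therefore a $\GQ$-invariant of $f_u$ (being canonically extractable from the $\GQ$-invariant multiset $\Sqct(f_u)$). Since $\beta,\alpha$ are injective functions of $u$ on $M'_0$ — one recovers the $u_i$ for odd $i$ from the number of size-$i$ parts of $\beta(u)$, and $u_0$ from $\alpha(u)$, and the even-$i$ data is determined — distinct $u$ yield distinct genus-$0$ triples, hence distinct $\GQ$-orbits. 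This gives the bound $|M_0(\psi,\mu) \cap \mathcal{B}|$.

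The main obstacle I anticipate is the \textbf{existence and genus bookkeeping}, not the orbit-separation logic. Two things need care. (i) Showing $g(X) > 1$ from $2r + s < n$: Riemann--Hurwitz gives $2g(X) - 2 = \ram(\psi) + \ram(\mu) + \ram(\psi) - 2n + \dots$; unwinding, $2g(X) - 2 = 2(n-r) + (n - s) - 2n = n - 2r - s > 0$ exactly when $2r + s < n$, so $g(X) \geq 1$, and one must verify the inequality is actually strict enough to get $g > 1$ — likely $2r+s < n$ gives $2g-2 \geq 1$ hence $g \geq 2$ since $2g-2$ is even and positive forces $2g - 2 \geq 2$. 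Wait — $n - 2r - s$ need not be even a priori, so one should double-check the parity is automatic or build it into the hypothesis interpretation; this is the kind of elementary check deferred to Appendix~\ref{app:CompLemmata}. (ii) Constructing $f_u$: one needs a transitive triple $(\sigma_0, \tau_1, \tau_1^{-1}\sigma_0^{-1})$ with $\sigma_0$ of type $\psi$, $\tau_1$ of type $\beta(u)$, such that $\tau_1^2$ has type $\mu$ — here the definitions of $\alpha(u), \beta(u)$ are reverse-engineered from "squaring" a permutation (a $2k$-cycle squares to two $k$-cycles; a $(2k{+}1)$-cycle squares to a single $(2k{+}1)$-cycle), so the combinatorics matches by design, but transitivity of the constructed group and the realizability as a \emph{connected} cover (the $\mathcal{B}$ hypothesis, invoking Edmonds--Kulkarni--Stong) is exactly what the intersection with $\mathcal{B}$ in the statement encodes. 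I would isolate the purely combinatorial square-root matching as a lemma and cite Theorem~\ref{thm:Existence} for the connectedness.
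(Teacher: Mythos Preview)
Your proposal is correct and is essentially the paper's own argument: for each triple in $M_0(\psi,\mu)\cap\mathcal{B}$ one takes a Belyi function $g$ realizing it, sets $f=\Sigma(g)$, checks $f$ is a connected Belyi function with monodromy $(\psi,\mu,\psi)$ on a curve of genus $>1$, and then invokes Theorem~\ref{thm:PropOfSqct}(c) so that the unique genus-$0$ element of $\Sqct(f)$ becomes a $\GQ$-invariant $R(f)$ separating the orbits. Your two flagged worries dissolve on inspection: connectedness of $f$ is not what $\mathcal{B}$ encodes but rather comes from Propositions~\ref{prop:RepProduct}(a) and~\ref{prop:RepFiberedProduct} (the $M'_0$ condition ``some odd $i$ with $u_i\neq\ell_i/2$'' guarantees $\tau_1$ has an odd cycle, which is all the proof of~\ref{prop:RepProduct}(a) uses), and once $f$ is Belyi, Riemann--Hurwitz forces $n-2r-s=2g(X)-2$ to be an even integer, so $2r+s<n$ immediately gives $g(X)\ge 2$.
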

\begin{rem}
In particular, we prove the existence of Belyi functions with monodromy of cycle type $(\psi,\mu,\psi)$ in the case when $\left|M_0(\psi,\mu) \cap \mathcal{B}\right| > 0$.  (See also Remark~\ref{rem:OrbitSplittingImpliesExistence}.)
\end{rem}

Similar to the $n$-cycle Orbit-Splitting Theorem, we obtain the following corollary.

\begin{cor}[$n$-cycle Orbit-Splitting Theorem, Alternate Form]
\label{thm:nCycOrbitSplittingAlt}
If $\psi = n \dashv n$, then $M_0(\psi,\mu) \subseteq \mathcal{B}$. Hence,
if $\mu \dashv n$ has less than $n-2$ parts, then there are at least
$\left|M_0(\psi,\mu)\right|$ $\GQ$-orbits of Belyi functions with monodromy of cycle type $(n,\mu,n)$.
\end{cor}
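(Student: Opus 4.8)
The argument will parallel the one behind Corollary~\ref{thm:nCycleOrbitSplitting}, with Theorem~\ref{thm:OrbitSplittingAlt} (hence Theorem~\ref{thm:PropOfSqct}(c)) playing the role that Theorem~\ref{thm:OrbitSplitting} plays there. Granting the first sentence, the second is formal: since $\psi = n$ has $r = 1$ part, the hypothesis that $\mu$ has $s < n-2$ parts gives $2r + s = s + 2 < n$, so Theorem~\ref{thm:OrbitSplittingAlt} yields at least $\lvert M_0(n,\mu)\cap\mathcal B\rvert$ Galois orbits of Belyi functions of monodromy cycle type $(n,\mu,n)$, and by the first sentence $M_0(n,\mu)\subseteq\mathcal B$, so this number equals $\lvert M_0(n,\mu)\rvert$. (If $M'_0(n,\mu) = \emptyset$ the statement is vacuous.) Hence the entire content is the inclusion $M_0(n,\mu)\subseteq\mathcal B$.

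To prove $M_0(n,\mu)\subseteq\mathcal B$, fix $u \in M'_0(n,\mu)$ and consider the triple $\bigl(n,\beta(u),\alpha(u)\bigr)$, with $\alpha(u),\beta(u)\dashv n$. First I would check that this triple is \emph{consistent}. Writing $s_{\mathrm{even}}$ for the number of even parts of $\mu$ and using $u_i = \tfrac{\ell_i}{2}$ for nonzero even $i$, one gets $\ram(\alpha(u)) = n - u_0$ and $\ram(\beta(u)) = n - \sum_{i\text{ odd}} u_i - \tfrac12 s_{\mathrm{even}}$, so that
\[
g\bigl(n,\beta(u),\alpha(u)\bigr) = \frac{(n-1) + \ram(\beta(u)) + \ram(\alpha(u))}{2} - n + 1 = \frac{n+1 - \bigl(u_0 + u_1 + \cdots + u_n\bigr)}{2}.
\]
Thus the defining relation $1 + u_0 + u_1 + \cdots + u_n = n+2$ of $M'_0(n,\mu)$ says exactly that this triple has genus $0$; in particular it satisfies Riemann--Hurwitz with a nonnegative integer genus. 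I would then invoke the existence theorem of Edmonds, Kulkarni, and Stong (Theorem~\ref{thm:Existence}): since one of the three partitions is the full cycle $n$, consistency is enough to produce $\sigma_0,\sigma_1,\sigma_\infty \in S_n$ with $\sigma_0\sigma_1\sigma_\infty = 1$ of respective cycle types $n$, $\beta(u)$, $\alpha(u)$. As $\sigma_0$ is an $n$-cycle, $\langle \sigma_0,\sigma_1,\sigma_\infty\rangle$ is transitive, so these permutations are the monodromy of a connected --- hence genuine --- Belyi function, and therefore $\bigl(n,\beta(u),\alpha(u)\bigr) \in \mathcal B$. Since $u$ was arbitrary, $M_0(n,\mu) \subseteq \mathcal B$.

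The only genuinely nontrivial step is this appeal to Edmonds--Kulkarni--Stong --- the fact that a totally ramified branch point removes every Hurwitz-type obstruction beyond the Riemann--Hurwitz count --- and it is precisely the input already used for Corollary~\ref{thm:nCycleOrbitSplitting}, so I expect no new difficulty there. The points to watch are the genus identity above and the hypothesis-matching in the reduction to Theorem~\ref{thm:OrbitSplittingAlt}; in particular one should keep track of the convention $\ell_0 = n$ implicit in ``let $\psi_0 = n$'' in the definition of $M'_0$, which is what makes $\alpha(u)$ an honest involution type in $S_n$ and makes the Riemann--Hurwitz bookkeeping come out to genus $0$ as claimed.
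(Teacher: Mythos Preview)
Your proposal is correct and follows the same route as the paper. The paper's own argument is a single sentence: it notes that the elements of $M_0(\psi,\mu)$ are Riemann--Hurwitz consistent by construction, and then that when $\psi = n$ the inclusion $M_0(n,\mu)\subseteq\mathcal B$ is immediate from the Edmonds--Kulkarni--Stong existence result (Theorem~\ref{thm:Existence}); your write-up supplies the explicit genus-$0$ verification and the hypothesis check for Theorem~\ref{thm:OrbitSplittingAlt} that the paper leaves implicit, but the logic is identical.
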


\section{Proof of Theorem~\ref{thm:PropOfSqct}}
\label{sec:ProofProp}
Let $f$ and $t$ be affine coordinates centered at 0 on $\mathbb{P}^1_f$
and $\mathbb{P}^1_t$, respectively.  Define the morphism
$t = \frac{4f}{(f+1)^2} : \mathbb{P}^1_f \rightarrow \mathbb{P}^1_t$.  The proof of Theorem~\ref{thm:PropOfSqct} relies on the fact that $t$ is defined over $\mathbb{Q}$.
In Section~\ref{subsec:ComputeT_*}, we compute
the induced morphism $t_*$ of fundamental groups,
which we apply in Section~\ref{subsec:GQInvariance} to prove that
the square-root cycle type class is $\GQ$-invariant.
In Sections~\ref{subsec:PropOfSqctA} and~\ref{subsec:PropOfSqctC}, we use the geometric properties
of base-changing by $t$ to prove parts (a) and (c) of Theorem~\ref{thm:PropOfSqct}, respectively.  In Section~\ref{subsec:PropOfSqctB}, we use the combinatorial properties
of the monodromy representation of $t$ to prove Theorem~\ref{thm:PropOfSqct}(b).

\subsection{Computation of the morphism \texorpdfstring{$t_*$}{t\_*} of topological fundamental groups}
\label{subsec:ComputeT_*}
Notice that $t'(0) = 4$, and therefore $t$ preserves the topological tangential base-point $\base$.
The function $t$ defines a morphism from $\mathbb{P}^1 \setminus \{-1,0,1,\infty\}$ to $\mathbb{P}^1 \setminus \{0,1,\infty\}$.  We choose generators for
$\pi_1\left(\mathbb{P}^1 \setminus \{-1,0,1,\infty\},\base\right)$ as in Figure~\ref{fig:P1Minus4}.
\begin{figure}
\includegraphics[width=4in]{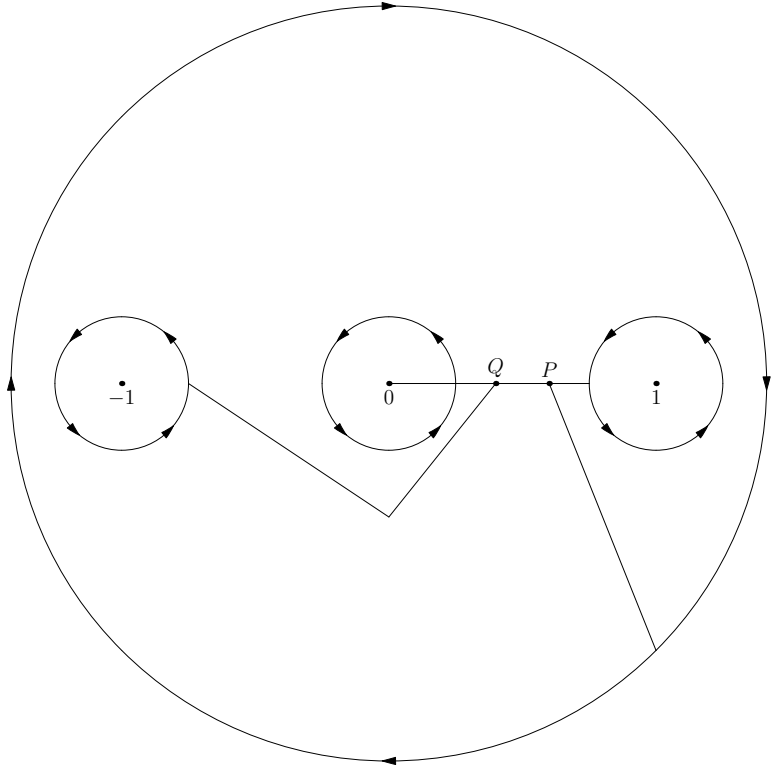}
\caption{{\bf Generators for $\pi_1\left(\mathbb{P}^1\setminus \{-1,0,1,\infty\},\protect\base\right).$} The homotopy classes $y_0,y_1,y_\infty$ are
defined similarly to Figure~\ref{fig:P1Minus3}.  The homotopy class $y_{-1}$ is defined by going from 0 to $Q$ along the segment, moving along the marked path to the circle around $-1$, traversing the circle around $-1$ counterclockwise, returning to $Q$ around the marked path, and then returning to 0 along the segment. It is evident that $y_{-1}y_0y_1y_\infty = 1$.} 
\label{fig:P1Minus4}
\end{figure}
It is clear that
\begin{align*}
t_*y_0 &= x_0\\
t_*y_1 &= x_1^2\\
t_*y_{-1} &= x_\infty^2\\ 
\end{align*}
where the equalities are up to base-point fixing homotopy.
Because $y_{-1}y_0y_1y_\infty = 1$ and $t_*$
is a homomorphism, we have
\[
t_*y_\infty = t_*\left(y_1^{-1}y_0^{-1}y_{-1}^{-1}\right)\\
=x_1^{-2} x_0^{-1} x_\infty^{-2} = x_1^{-2} x_0^{-1} x_0 x_1 x_0 x_1 = x_1^{-1}x_0x_1.\]

\subsection{Proof that \texorpdfstring{$\Sqrt$}{Sqrt} is \texorpdfstring{$\GQ$-}{Galois }equivariant}
\label{subsec:GQInvariance}

Let $n$ be a positive integer, and let $g: X \rightarrow \mathbb{P}^1_t$ be a Belyi function of degree $n$ defined on an algebraic curve $X$.
Let $X'$ be the normalization of $X \times_{\mathbb{P}^1_t} \mathbb{P}^1_f$,
and let $f: X' \rightarrow \mathbb{P}^1_f$ be the projection.
The curve $X'$ may not be irreducible.

\begin{defn}
We write $f = \Sigma(g)$, so that $\Sigma$ defines a function
from the set of isomorphism classes of Belyi functions to the
set of isomorphism classes of morphisms of curves $X' \rightarrow \mathbb{P}^1$,
where $X'$ is not necessarily irreducible.
\end{defn}

\[\begin{diagram}
X &\lTo^{\alpha} & X'\\
\dTo^g && \dTo_f \\
\mathbb{P}^1_t & \lTo_{t=\frac{4f}{(f+1)^2}} & \mathbb{P}^1_f
\end{diagram}\]

The projection $\alpha: X' \rightarrow X$ induces a bijection
between the fibers $g^{-1}(\base)$ and $(g')^{-1}(\base)$.  We order the fiber $g^{-1}(\base)$,
which gives an order on $(g')^{-1}(\base)$ via the restriction of $\alpha$.  Using these
orders, we can define the monodromy of $f$ and $g$ as fixed representations (not isomorphism
classes of representations) of $\pi_1(\mathbb{P}^1_f \setminus \{-1,0,1,\infty\},\base)$ and
$\pi_1(\mathbb{P}^1_t \setminus \{0,1,\infty\},\base)$ on $[n]$.  Let $p_k \in S_n$
be the image of $x_k$ under the representation of $\pi_1(\mathbb{P}^1_t \setminus \{0,1,\infty\},\base)$
for $k \in \{0,1,\infty\}$,
and let $\sigma_k$ be the image of $y_k$ under the monodromy
representation of $\pi_1(\mathbb{P}^1_f \setminus \{-1,0,1,\infty\},\base)$
for $k \in \{-1,0,1,\infty\}$.

For all Belyi functions $g$, the fact that $\Sigma(g)$ is \'{e}tale
outside $\{-1,0,1,\infty\}$ follows from the fact that \'{e}taleness is
preserved under base-change.  The following proposition is immediate
by lifting loops.

\begin{prop}
\label{prop:MonodromyProduct}
Let $g:X \to \mathbb{P}^1$ be a Belyi function, with monodromy generators $\tau_{0},\tau_1,\tau_\infty$.
Then, $\Sigma(g)$ is unbranched outside $\{-1,0,1,\infty\}$.  Let $\sigma_{-1},\sigma_0,\sigma_1,\sigma_\infty$
be the monodromy of the function $\Sigma(g)$ over $-1,0,1,\infty,$ respectively
(the permutations are defined up to simultaneous conjugation in $S_n$ because
we fixed loops of winding number 1 about each branch point in both $\mathbb{P}^1_t$
and $\mathbb{P}^1_f$).  Then, we have $\sigma_0 = \tau_0$, $\sigma_1 = \tau_1^2$, $\sigma_{-1} = \tau_\infty^2$,
and $\sigma_\infty = \tau_1^{-1}\tau_0\tau_1$.
\end{prop}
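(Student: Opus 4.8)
The plan is to identify the monodromy of $\Sigma(g)=f\colon X'\to\mathbb{P}^1_f$ with the pullback along the homomorphism $t_*$ computed in Section~\ref{subsec:ComputeT_*} of the monodromy of $g$, and then to read off $\sigma_{-1},\sigma_0,\sigma_1,\sigma_\infty$ directly from the formulas for $t_*y_{-1},t_*y_0,t_*y_1,t_*y_\infty$.

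First I would pin down the branch locus. Since $t=\frac{4f}{(f+1)^2}$, one checks directly that $t(f)=0\iff f\in\{0,\infty\}$, that $t(f)=1\iff f=1$, and that $t(f)=\infty\iff f=-1$, so $t^{-1}(\{0,1,\infty\})=\{-1,0,1,\infty\}$; moreover $t'=\frac{4(1-f)}{(f+1)^3}$ vanishes only at $f=1$, so $t$ restricts to a finite \'{e}tale map $\mathbb{P}^1_f\setminus\{-1,0,1,\infty\}\to\mathbb{P}^1_t\setminus\{0,1,\infty\}$ of degree $2$. Base-changing the finite \'{e}tale cover $g$ along this map preserves \'{e}taleness, so $X\times_{\mathbb{P}^1_t}\mathbb{P}^1_f$ is finite \'{e}tale, hence smooth and in particular normal, over $\mathbb{P}^1_f\setminus\{-1,0,1,\infty\}$. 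Consequently the normalization does not alter this locus, and $f=\Sigma(g)$ is unbranched outside $\{-1,0,1,\infty\}$, as claimed.

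Next I would use that $t$ fixes the tangential base-point $\base$ (since $t'(0)=4>0$, as noted in Section~\ref{subsec:ComputeT_*}) and that $\alpha$ restricts to a bijection $g^{-1}(\base)\to f^{-1}(\base)$, through which we transport the chosen ordering of the fiber. With these identifications, lifting a loop $\gamma$ based at $\base$ in $\mathbb{P}^1_f\setminus\{-1,0,1,\infty\}$ through $f$ is the same as lifting the loop $t\circ\gamma$ through $g$ — this is precisely the universal property of the fibered product combined with the fact that $\alpha$ is a bijection on the relevant fibers. Hence the monodromy representation of $f$ on $[n]=f^{-1}(\base)$ is $h\circ t_*$, where $h\colon\pi_1(\mathbb{P}^1_t\setminus\{0,1,\infty\},\base)\to S_n$ is the monodromy of $g$, normalized so that $h(x_k)=\tau_k$. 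Substituting the computations of Section~\ref{subsec:ComputeT_*}, namely $t_*y_0=x_0$, $t_*y_1=x_1^2$, $t_*y_{-1}=x_\infty^2$, and $t_*y_\infty=x_1^{-1}x_0x_1$, gives $\sigma_0=h(t_*y_0)=\tau_0$, $\sigma_1=\tau_1^2$, $\sigma_{-1}=\tau_\infty^2$, and $\sigma_\infty=\tau_1^{-1}\tau_0\tau_1$.

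The only delicate point — which I would state carefully rather than belabor, since the paper already flags the proposition as ``immediate by lifting loops'' — is the compatibility of the base-points and generating sets: because the generators of both fundamental groups are chosen with winding number $1$ about each puncture and the tangential base-point $\base$ is preserved by $t$, the permutations $\sigma_{-1},\sigma_0,\sigma_1,\sigma_\infty$ are well-defined up to simultaneous conjugation in $S_n$, and the freedom in ordering $g^{-1}(\base)$ acts by simultaneous conjugation and so does not disturb the stated identities.
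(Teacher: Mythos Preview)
Your argument is correct and is precisely the approach the paper takes: it computes $t_*$ on the generators $y_k$ in Section~\ref{subsec:ComputeT_*} and then declares the proposition ``immediate by lifting loops,'' which is exactly what you have spelled out in detail. (One harmless slip of the pen: $\alpha\colon X'\to X$, so the fiber bijection runs $f^{-1}(\base)\to g^{-1}(\base)$ rather than the direction you wrote.)
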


We are now ready to link the constructions of this subsection
to the square-root class.

\begin{defn}
Let $f: X \to \mathbb{P}^1$ be a Belyi function.  Define
\[\Sqrt'(f) = \{g \mid \Sigma(g) \cong f\},\]
and call $\Sqrt'(f)$ the \emph{fibered product square-root class of $f$}.
\end{defn}

\begin{thm}
\label{thm:SigmaSqrt}
\begin{enumerate}[label=(\alph*)]
\item The function $\Sqrt'$ is $\GQ$-equivariant.
\item Let $n$ be a positive integer, and
let $f: X \to \mathbb{P}^1$ be a Belyi function
of degree $n$.  Then, $\Sqrt(f)$ is
the set of monodromy triples of elements of $\Sqrt'(f)$.
\end{enumerate}
In particular, the function $\Sqrt$ is $\GQ$-equivariant.
\end{thm}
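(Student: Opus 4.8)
The plan is to establish (a) and (b) separately and then deduce the final clause. For part (a), I would argue that the entire construction $\Sigma$ is defined over $\mathbb{Q}$: the morphism $t = \frac{4f}{(f+1)^2}$ is a $\mathbb{Q}$-rational map $\mathbb{P}^1_f \to \mathbb{P}^1_t$, normalization commutes with the base-change by $\Spec\sigma$ for $\sigma\in\GQ$ (since $\sigma$ induces an automorphism of $\BAR{Q}$, hence an isomorphism of schemes that preserves integral closure), and fibered products are manifestly natural. Concretely, for a Belyi function $g:X\to\mathbb{P}^1_t$ and $\sigma\in\GQ$, one has a canonical isomorphism $\Sigma(g)^\sigma \cong \Sigma(g^\sigma)$ because forming $X\times_{\mathbb{P}^1_t}\mathbb{P}^1_f$ and normalizing are operations that commute with applying $\Spec\sigma$ — the only input that is not obviously $\sigma$-equivariant would be the morphism $t$, and that is $\mathbb{Q}$-rational so it is fixed. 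Therefore $\Sigma$ is $\GQ$-equivariant as a function on isomorphism classes, and hence its fibers $\Sqrt'(f) = \{g \mid \Sigma(g)\cong f\}$ satisfy $\Sqrt'(f)^\sigma = \Sqrt'(f^\sigma)$, i.e. $\Sqrt'$ is $\GQ$-equivariant.

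For part (b), I would unwind the definitions. Given $f:X\to\mathbb{P}^1$ of degree $n$, an element of $\Sqrt'(f)$ is a Belyi function $g$ with $\Sigma(g)\cong f$; by Proposition~\ref{prop:MonodromyProduct}, if $g$ has monodromy triple $(\tau_0,\tau_1,\tau_\infty)$ then the monodromy of $\Sigma(g)=f$ over $0,1,-1,\infty$ is $(\tau_0,\tau_1^2,\tau_\infty^2,\tau_1^{-1}\tau_0\tau_1)$. Now $f$ is a Belyi function of $\mathbb{P}^1_f\setminus\{0,1,\infty\}$ — wait, I should be careful here: $\Sigma(g)$ is unbranched only outside $\{-1,0,1,\infty\}$, so to recover $f$ as a bona fide Belyi function (unbranched outside $\{0,1,\infty\}$) one needs $\tau_\infty^2$ to act trivially, i.e. the monodromy over $-1$ must be trivial. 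Re-reading the setup: indeed in Section~\ref{subsec:GQInvariance}, $f$ IS a Belyi function over $\mathbb{P}^1_f\setminus\{0,1,\infty\}$, so its monodromy over $-1$ is trivial; computing $t_*y_{-1}=x_\infty^2$ forces $\sigma_\infty^2 = $ (monodromy of $f$ over $-1$) $=1$, but this is automatic from the structure. Actually the cleaner statement is: recording the monodromy over $0,1,\infty$ of $f$ via the generators $x_0,x_1,x_\infty$, and using $t_*x_0 = y_0$-image relations, the monodromy triple of $f$ is $(\sigma_0,\sigma_1,\sigma_\infty) = (\tau_0,\tau_1^2,\tau_1^{-1}\tau_0\tau_1)$ — so writing $\sigma_0 = \tau_0$, $\tau_1$ a square root of $\sigma_1$ with $\sigma_\infty = \tau_1^{-1}\sigma_0\tau_1$, and $\tau_\infty = \tau_1^{-1}\tau_0^{-1}$ (from $\tau_0\tau_1\tau_\infty=1$), we see the monodromy triple $(\tau_0,\tau_1,\tau_\infty)$ of any $g\in\Sqrt'(f)$ lies in $\Sqrt(f)$. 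Conversely, given any triple in $\Sqrt(f)$, the Riemann Existence Theorem produces a Belyi function $g$ with that monodromy, and Proposition~\ref{prop:MonodromyProduct} together with the equivalence of categories (transitive permutation representations $\leftrightarrow$ covers) shows $\Sigma(g)\cong f$; so the map "monodromy triple" from $\Sqrt'(f)$ to $\Sqrt(f)$ is surjective. Modding out by simultaneous conjugation on both sides (the source because monodromy is only defined up to conjugation, the target by definition), this map is a bijection, which is the content of (b).

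Finally, the last clause follows by combining (a) and (b): the assignment $f \mapsto \Sqrt(f)$ factors as $f\mapsto\Sqrt'(f)\mapsto\{\text{monodromy triples}\}$, the first step is $\GQ$-equivariant by (a), and the passage to monodromy triples is $\GQ$-equivariant because the $\GQ$-action on Belyi functions corresponds to the $\GQ$-action on conjugacy classes of representations of $\widehat{\FreeTwo}$ (as set up in Section~2). Hence $\Sqrt(f)^\sigma = \Sqrt(f^\sigma)$ for all $\sigma\in\GQ$. I expect the main obstacle to be bookkeeping in part (b): carefully matching the generators $x_i, y_i$ and the computation $t_*y_\infty = x_1^{-1}x_0x_1$ to the exact form $(\sigma_0, \tau_1, \tau_1^{-1}\sigma_0^{-1})$ appearing in the definition of $\Sqrt$, and checking that "up to simultaneous conjugation in $S_n$" is handled consistently on the Belyi-function side and the representation side (in particular that no transitivity/connectedness subtlety arises even though $X'$ may be reducible — one restricts to the component, or equivalently the monodromy triple generates a transitive subgroup exactly when the relevant component is selected).
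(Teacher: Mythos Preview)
Your proposal is correct and follows essentially the same approach as the paper. For part (a) the paper's argument is exactly yours: $\Sigma$ is $\GQ$-equivariant because fibered products and normalizations commute with base-change by $\Spec\sigma$, and $t$ is defined over $\mathbb{Q}$. For part (b) the paper simply says ``follows immediately from Proposition~\ref{prop:MonodromyProduct}''; your unwinding is more detailed but identical in substance, and your observation that transitivity of $\langle\sigma_0,\tau_1\rangle$ is automatic (since it contains the transitive group $\langle\sigma_0,\sigma_1\rangle=\langle\sigma_0,\tau_1^2\rangle$) cleanly handles the connectedness worry you raise at the end---in the setting of the theorem $f$ is already a Belyi function, so its domain is irreducible and no component-selection is needed.
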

\begin{proof}
We begin by proving part (a).  We treat
Belyi functions as finite \'{e}tale covers of $\mathbb{P}^1_\BAR{Q} \setminus \{0,1,\infty\}$.
The fact that $\Sigma$ is $\GQ$-equivariant
then follows from the fact that base-changing by by $\Spec \sigma$ preserves fibered products and normalizations for all $\sigma \in \GQ$.

Part (b) follows immediately from Proposition~\ref{prop:MonodromyProduct}.
\end{proof}

\subsection{Proof of Part (a)}
\label{subsec:PropOfSqctA}

Let $f: X \to \mathbb{P}^1$ be a Belyi function.
The key to the proof of this part is to construct an injection
from $\Sqrt'(f)$ to the set of involutions on $X$.
The remainder of the statement follows from Hurwitz's Automorphism Theorem.

\begin{proof}[Proof of Theorem~\ref{thm:PropOfSqct}(a)]
Let $Inv(X)$ denote the set of non-trivial involutions on $X$.
We construct an injection $i: \Sqrt'(f) \to Inv(X)$.  Let
$g \in \Sqrt'(f)$, so that we have a diagram
\begin{equation}
\label{eq:PropSqcta}
\begin{diagram}
X &\lTo^{\alpha} & X'\\
\dTo^g && \dTo_f \\
\mathbb{P}^1_t & \lTo_{t=\frac{4f}{(f+1)^2}} & \mathbb{P}^1_f
\end{diagram}
\end{equation}
with $X$ the normalization of the fibered product $Y \times_{\mathbb{P}^1_t} \mathbb{P}^1_f$.
The bottom morphism is of degree 2, and the vertical morphisms
are of degree $n$, which implies that the top morphism $\alpha$
is also of degree 2.  There is an involution $\iota: X' \to X'$, which is
the unique deck transformation for the restriction of $\alpha$
to its unramified locus.  Let $i(g) = \iota$.  Note that
$\alpha: X' \to X$ is the quotient of $X'$ by $\iota$,
so that $\iota$ determines $\alpha$ up to composition
by an automorphism of $Y$.

To prove that $i$ is injective, it suffices to prove that $\alpha$, $f$,
and the bottom morphism uniquely determine $g$.  This is obvious,
because $\alpha$ is surjective and Diagram~\ref{eq:PropSqcta}
is required to commute.  Therefore, $|\Sqrt'(f)| \le |Inv(X)|$,
and the fact that $|\Sqrt(f)| \le |Inv(X)|$ follows by Theorem~\ref{thm:SigmaSqrt}(b).
\end{proof}

\subsection{Proof of Part (b)}
\label{subsec:PropOfSqctB}

We transfer to representations of $\FreeTwo$ to analyze
the fibered product square-root class.  Fix
a generating set $\FreeTwo = \langle x, y \rangle$ and a
positive integer $n$.
For all positive integers $k$, let $[k]$ denote the set $\{1,2,\ldots,k\}$.
Let $T_r$ be the set of conjugacy classes of transitive representations $m: \FreeTwo \rightarrow S_n$
such that there exists an odd positive integer $c$ such that $m(y)$
contains an odd number of cycles of length $c$ and no cycle of length $2c$.  Let $\xi$ denote the
representation of $\FreeTwo$ on $S_2$ with $\xi(x) = (1)(2)$ and $\xi(y) = (1 \, 2)$.

\begin{prop}
\label{prop:RepProduct}
Let $n$ be a positive integer. Let $m \in T_r$ be a permutation representation $m: \FreeTwo \rightarrow S_n$,
and let $m'$ be a permutation representation $m': \FreeTwo \rightarrow S_n$.
\begin{enumerate}[label=(\alph*)]
\item The product representation $m \times \xi$ is transitive.
\item If $m \times \xi \cong m' \times \xi$, then $m \cong m'$.
\end{enumerate}
\end{prop}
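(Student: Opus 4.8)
The plan is to work entirely with the permutation action of $\FreeTwo$ on $[n] \times [2]$ induced by $m \times \xi$, and to exploit the single hypothesis that distinguishes $m \in T_r$: there is an odd $c$ for which $m(y)$ has an odd number of $c$-cycles and no $2c$-cycles. First I would set up notation: write $\sigma = m(x)$, $\tau = m(y)$, so that $(m\times\xi)(x) = (\sigma,\sigma)$ acts on $[n]\times[2]$ fixing the second coordinate, while $(m\times\xi)(y) = (\tau, (1\,2))$ swaps the second coordinate. For part (a), the orbit of $(i,1)$ under the subgroup generated by $(\sigma,\sigma)$ alone is $\{(j,1) : j \in \langle\sigma\rangle\cdot i\}$ and likewise in the second sheet; so the $\langle(\sigma,\sigma)\rangle$-orbits are exactly the $\langle\sigma\rangle$-orbits doubled across the two sheets. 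Since $m$ is transitive, to show $m\times\xi$ is transitive it suffices to connect the two sheets, i.e. to exhibit a single element of $\langle m\times\xi\rangle$ carrying some $(i,1)$ to some $(j,2)$ — but $(m\times\xi)(y)$ already does this. Hence every point of $[n]\times\{1\}$ reaches every point of $[n]\times\{2\}$ and conversely, giving transitivity; this part is routine and I would keep it to a few lines.

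The substance is part (b). Here the idea is to recover $m$ from $m\times\xi$ by a parity argument on cycle lengths. Concretely, I would analyze the cycle type of $(m\times\xi)(y) = (\tau,(1\,2))$ acting on $[n]\times[2]$: a $k$-cycle of $\tau$ gives rise either to two $k$-cycles of $(\tau,(1\,2))$ (when the two sheets stay separate) or to one $2k$-cycle (when traversing the cycle once flips the sheet), and which case occurs is governed by the sign twist coming from $(1\,2)$. The precise bookkeeping is the standard wreath-product cycle-index computation: for each $k$, if $\tau$ has $\ell_k$ cycles of length $k$, then $(\tau,(1\,2))$ has some number $a_k$ of $k$-cycles and $b_k$ of $2k$-cycles with $a_k + 2b_k = 2\ell_k$, and moreover $a_k \equiv \ell_k + (\text{number of length-}k\text{ cycles of }\tau\text{ on which the sheet flips}) \pmod 2$; because exactly the cycles on which the product of the $\xi$-labels is $(1\,2)$ flip, and for our fixed $\xi$ every point flips, so in fact a $k$-cycle of $\tau$ contributes a single $2k$-cycle when $k$ is odd and... — the key takeaway I want is that the multiset of cycle lengths of $(m\times\xi)(y)$ on $[n]\times[2]$ determines the multiset of cycle lengths of $\tau$ when restricted to the distinguished length $c$, using that $\tau$ has no $2c$-cycles and an odd number of $c$-cycles.

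Having pinned down that structure, here is how I would finish. Suppose $m\times\xi \cong m'\times\xi$, say via a bijection $[n]\times[2] \to [n]\times[2]$ conjugating one action to the other. The hard part — and the step I expect to be the main obstacle — is to show this conjugating bijection respects the sheet decomposition, i.e. maps $[n]\times\{j\}$ to the two sheets in a way compatible with $\alpha$, so that it descends to a bijection of $[n]$ conjugating $m$ to $m'$. The leverage for this is exactly Proposition~\ref{prop:RepProduct}(a) together with the $T_r$-hypothesis: the involution $\iota$ swapping the two sheets is characterized \emph{intrinsically} inside $\langle m\times\xi\rangle$ (or its centralizer) using the odd-$c$ condition — the number of $c$-cycles of $\tau$ being odd forces the orbit-structure on $c$-cycles to be ``unbalanced'' in a way that a sheet-non-preserving bijection could not match, pinning down the sheet involution canonically; then $m\times\xi$ modulo this canonical involution is $m$, and likewise $m'$, so $m\cong m'$. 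I would phrase this last move as: the quotient of the $\langle m\times\xi\rangle$-action by the centralizing involution $\iota$ is well-defined and isomorphism-invariant, and equals $m$; since $\iota$ is determined by the isomorphism class of $m\times\xi$ (via the cycle-length parity criterion on the image of $y$), any isomorphism $m\times\xi\cong m'\times\xi$ carries the $\iota$ for $m$ to the $\iota$ for $m'$ and hence induces $m\cong m'$. The one genuinely delicate point to write carefully is why the odd count of $c$-cycles makes $\iota$ unique — this is where I would spend most of the effort, reducing to a counting statement about how cycles of odd length $c$ in $\tau$ must pair up under any sheet-exchanging symmetry.
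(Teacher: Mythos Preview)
Your argument for part (a) has a genuine gap: you do not use the $T_r$ hypothesis, and without it the claim is false. You argue that since $m$ is transitive on $[n]$ and $(m\times\xi)(y)$ connects the two sheets, $m\times\xi$ is transitive. But transitivity of $m$ uses words in both $x$ and $y$, and any such word moves you between sheets exactly when it contains an odd number of $y$'s; so moving \emph{within} a sheet requires the $m$-action of $\ker\xi$ to be transitive on $[n]$, which is not automatic. Concretely, take $n=2$, $m(x)=e$, $m(y)=(1\,2)$: then $m$ is transitive, yet $(m\times\xi)(y)$ sends $(1,1)\mapsto(2,2)\mapsto(1,1)$ and $(1,2)\mapsto(2,1)\mapsto(1,2)$, so $m\times\xi$ has two orbits. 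The paper's proof uses exactly the missing ingredient: since $m\in T_r$, the permutation $m(y)$ has a cycle of odd length $k$; routing through a point $p_1$ of that cycle and inserting $y^k$ when needed flips the sheet parity without moving $p_1$, which is what makes the transitivity go through.

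For part (b), your high-level strategy---show that any conjugating bijection $\alpha$ respects the sheet decomposition up to a global swap, then descend to a conjugacy of $m$ and $m'$---is the same as the paper's. Where you diverge is in proposing to characterize the sheet involution $\iota$ \emph{intrinsically} inside the centralizer via a parity criterion on $c$-cycles, and then quotient by it. This is plausible in spirit but you leave the key step (``why the odd count of $c$-cycles makes $\iota$ unique'') entirely unwritten, and your cycle-type discussion is muddled: for $(\tau,(1\,2))$ the dichotomy is not ``either two $k$-cycles or one $2k$-cycle'' depending on some twist---it is deterministic, with a $k$-cycle of $\tau$ yielding one $2k$-cycle when $k$ is odd and two $k$-cycles when $k$ is even. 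The paper's route is more direct and avoids characterizing $\iota$ abstractly: it first observes that the $\ker\xi$-orbits under $m\times\xi$ are exactly $[n]\times\{1\}$ and $[n]\times\{2\}$ (this uses part (a)), so $\alpha$ permutes these two sets; then it uses the odd count of $2c$-cycles in $(m\times\xi)(y)$ to force $m'(y)$ to have at least one $c$-cycle as well; finally it tracks a single matched pair of $c$-cycles in $m(y)$ and $m'(y)$ through $\alpha$ to show explicitly that $\alpha(i,j)=(\beta(i),j)$ for some $\beta\in S_n$. Your plan could likely be completed, but as written the crucial uniqueness-of-$\iota$ step is a promissory note rather than an argument.
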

\begin{proof}
Suppose that $m,m'$ satisfy the conditions of the proposition.
Let $m_\xi = m \times \xi$ and let $m'_\xi = m \times \xi$.

First, we prove part (a).
Let $(a,b),(a',b') \in [n] \times [2]$, and we will prove that there exists
a word $w \in \FreeTwo$ such that $m_\xi(w)(a,b) = (a',b')$.
By definition, the permutation $m(y) \in S_n$
must have an odd cycle in its cycle decomposition.  Suppose that
$(p_1 \, p_2 \, \cdots \, p_k)$ be a cycle in $m(y)$ with $k$ odd.
Let $w_0,w_1 \in \FreeTwo$ be such that $m(w_0)(a) = p_1$
and $m(w_1)(p_1) = a'$; because $m$ is transitive, such $w_0,w_1$
exist.  If $\xi(w_1w_0)(b) = b'$, then we can take $w = w_1w_0$
because $m(w_1w_0)(a) = a'$.  Hence, we can assume that $\xi(w_1w_0) \not= b'$.
Let $w = w_1y^kw_0$.  Because $k$ is odd, $\xi(w)(b) = b'$, and it is
easy to see that $m(w)(a) = a'$.
It follows that $m'_\xi$ is transitive.

We now prove part (b).  Part (a) implies that $m'$ is also transitive.
There is an automorphism $\alpha$ of $[n] \times [2]$ such that
$\alpha \circ (m \times \xi)  = m' \times \xi$.
Let $G$ be the kernel of $\xi$; it is a normal subgroup of index 2 in $\FreeTwo$.
Note that the $m$-action (resp. $m'$-action) of $G$ on $[n] \times [2]$ fixes the second coordinate.
Because $m_\xi$ and $m'_\xi$ are transitive representations, the group
$\FreeTwo/G \cong (\mathbb{Z}/2\mathbb{Z})^+$ acts transitively
on the set of $m_\xi$-orbits (resp. $m'_\xi$-orbits) of $G$ in $[n] \times [2]$.
In particular, there are at most 2 $m_\xi$-orbits (resp. $m'_\xi$-orbits) of $G$.  Hence, the $m_\xi$-orbits (resp. $m'_\xi$-orbits) of $G$ are $[n] \times \{1\}$ and $[n] \times \{2\}$.
The action of $\alpha$ must preserve these orbits.
Therefore, the second coordinate of $\alpha(i,j)$ must either be $j$ for all $i,j$ or be $3-j$ for all $i,j$.
Furthermore, $G$ acts transitively on $[n] \times \{1\}$.

Suppose that $m(y)$ contains $2k+1$ of cycles of length $c$ and no cycle of length $2c$.
Then, $m_\xi(y)$ contains $2k+1$ cycles of length $2c$.  Therefore, $m'_\xi(y)$
must also contain $2k+1$ cycles of length $2c$.  Suppose that $m'(y)$ contains
$a$ cycles of length $c$ and $b$ cycles of length $2c$.  Then, $m'_\xi(y)$ contains
$a+2b$ cycles of length $2c$, from which it follows that $a$ is odd and thus
$a \ge 1$.  Let $\zeta'$ be a cycle of length $c$ in $m'_\xi(y)$, and let
$\tau' = \zeta' \times \xi(y)$ the corresponding cycle of length $2c$ in $m'_\xi(y)$.
Let $\tau = \alpha^{-1} \circ \tau' \circ \alpha$ be the corresponding
cycle of length $2c$ in $m_\xi$.  Because $m(y)$ does not contain any
cycle of length $2c$, we must have $\tau = \zeta \times \xi(y)$
for some cycle $\zeta$ of length $c$ in $m(y)$.

Without loss of generality, we assume that
$1$ is not fixed by $\zeta$, and we may
also assume that the second coordinate of $\alpha(1,1)$ is $1$.
Let $\alpha(1,1) = (\beta(1),1)$.
Then, we have $\tau^c(1,1) = (1,2)$ and $\tau^c(1,2) = (1,1)$, and similarly
that $\tau'^c(\beta(1),1) = (\beta(1),2)$ and $\tau'^c(\beta(1),2) = (\beta(1),1)$.

It suffices to prove that there is a permutation $\beta \in S_n$ such that
$\alpha(i,j) = (\beta(i),j)$, as this would imply that the representations $m$
and $m'$ differ only by conjugation by an element of $S_n$.  Fix $i$ and let $g \in G$ be such that
$m_\xi(g)(1,1) = (i,1)$.  We must have $m(g)(i) = 1$, from which it follows that $m_\xi(g)(1,2) = (i,2)$.
We have
\[m_\xi(g) \circ \tau^c (i,1) = (i,2).\]
It is clear that
\[m'_{\xi}(g)(\beta(i),1) = \alpha\circ m_\xi(g) \circ \alpha^{-1} = (\beta(1),1).\]
Thus, we have $m'(g)(\beta(i)) = \beta(1)$ from which it follows that $m'_{\xi}(\beta(i),2) = (\beta(1),2)$.
However, we have
\begin{align*}
\alpha(i,2) &= \alpha \circ m_\xi(g) \circ \tau^c \circ m_\xi(g)^{-1}(i,1)\\
&= \left(\alpha \circ m_\xi(g) \circ \alpha^{-1}\right) \circ \left(\alpha \circ \tau^c \circ \alpha^{-1}\right) (\alpha(1,1))\\
&= m'_{\xi}(g) \circ \tau'^c (\beta(1),1) = m'_{\xi}(g) (\beta(1),2) = (\beta(i),2),\end{align*}
as desired. The proposition follows.
\end{proof}

Fix the isomorphism
$\langle x,y \rangle = \FreeTwo \cong \pi_1(\mathbb{P}^1\setminus\{0,1,\infty\},\base)$
with $x \mapsto x_\infty$ and $y \mapsto x_1$.
Taking monodromy representations gives a bijection $K = K_n$ between
the set of isomorphism classes of degree $n$ Belyi functions
and the set of transitive representations $m: \FreeTwo \to S_n$.
An important auxiliary result that we use in the proof of Theorem~\ref{thm:PropOfSqct}
as well as the proof of the Orbit-Splitting Theorems is the following proposition.

\begin{prop}
\label{prop:RepFiberedProduct}
Fix a positive integer $n$. For all Belyi functions $g$ of degree $n$,
$K(\Sigma(g) \circ t) = K(g) \times \xi$, where $t = \frac{4f}{(f+1)^2}$.
\end{prop}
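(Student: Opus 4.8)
The plan is to work on the étale loci, where $\Sigma(g)\circ t$ becomes the structure morphism of an honest fibered product of covers of $U:=\mathbb{P}^1_t\setminus\{0,1,\infty\}$; the statement then follows from the elementary fact that the monodromy of a fibered product of covers is the external product of the monodromy representations of its factors. First note that, since $t'(0)=4\neq 0$ and the only ramification points of $t\colon\mathbb{P}^1_f\to\mathbb{P}^1_t$ are $f=\pm1$, the map $t$ restricts to a connected finite étale cover of degree $2$, which I will write $t\colon V\to U$ with $V:=\mathbb{P}^1_f\setminus\{-1,0,1,\infty\}$. Given a Belyi function $g\colon X\to\mathbb{P}^1_t$ of degree $n$, set $X^\circ:=X\setminus g^{-1}\{0,1,\infty\}$, so $g$ restricts to a degree-$n$ finite étale cover $g^\circ\colon X^\circ\to U$. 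Because $X^\circ\times_U V\to V$ is finite étale, hence smooth, and normalisation is an isomorphism over the smooth locus, the morphism $\Sigma(g)=f\colon X'\to\mathbb{P}^1_f$ restricts over $V$ to the base change $X^\circ\times_U V\to V$, with $f$ the projection to $V$ and $\alpha$ the projection to $X^\circ$; this is the (possibly disconnected) cover of $V$ whose monodromy is the monodromy of $\Sigma(g)$. Since $X'$ may be reducible, I read $K$ throughout simply as the monodromy representation, without imposing transitivity.

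The commutativity of the defining square gives $t\circ f|_{(X')^\circ}=g^\circ\circ\alpha|_{(X')^\circ}$, so the restriction of $\Sigma(g)\circ t$ to the étale loci is precisely the structure morphism $X^\circ\times_U V\to U$. I would then quote the following: for covers $Y_1\to Z$, $Y_2\to Z$ of a connected space $Z$ with a (possibly tangential) base point $z$, the fibre of $Y_1\times_Z Y_2\to Z$ over $z$ is $(Y_1)_z\times(Y_2)_z$, and a loop $\gamma\in\pi_1(Z,z)$ acts on it coordinate-wise, because the lift of $\gamma$ to $Y_1\times_Z Y_2$ is the pair of lifts; hence the monodromy of $Y_1\times_Z Y_2\to Z$ is the external product of the monodromy representations of $Y_1\to Z$ and $Y_2\to Z$. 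Applying this with $Y_1=X^\circ$, $Y_2=V$, $Z=U$ yields $K(\Sigma(g)\circ t)=K(g)\times(\text{monodromy of }t\colon V\to U)$ on the fibre $g^{-1}(\base)\times t^{-1}(\base)$.

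It remains to identify the monodromy of $t\colon V\to U$ with $\xi$. That cover is connected of degree $2$, so its monodromy is pinned down by the index-$2$ subgroup $t_*\pi_1(V,\base)\leq\pi_1(U,\base)\cong\FreeTwo$, which by the computation in Section~\ref{subsec:ComputeT_*} equals $\langle x_0,\,x_1^2,\,x_\infty^2,\,x_1^{-1}x_0x_1\rangle$; one checks that this is exactly the subgroup cutting out $\xi\colon\FreeTwo\to S_2$, so the two degree-$2$ permutation representations agree. I expect the only real point requiring care—bookkeeping rather than a genuine obstacle—to be carrying out this last identification compatibly with the orderings used to define $K$ and $\xi$, so that the geometric external product literally coincides with the abstract $K(g)\times\xi$ up to the simultaneous conjugation built into $K$; here one uses that $t$ fixes the tangential base point $\base$ and that we have fixed loops of winding number $1$ about every missing point in both $\mathbb{P}^1_t$ and $\mathbb{P}^1_f$ (Figures~\ref{fig:P1Minus3} and~\ref{fig:P1Minus4}). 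As a sanity check one can verify the conclusion directly on $x_0,x_1,x_\infty$ via Proposition~\ref{prop:MonodromyProduct} and multiplicativity of ramification indices under composition: $\Sigma(g)\circ t$ has over $0$ the ramification of $\Sigma(g)$ over $f=0$ together with that over $f=\infty$ (two copies of the cycle type of $K(g)(x_0)$, namely $\tau_0$ and $\tau_1^{-1}\tau_0\tau_1$), and over $1$ and $\infty$ the cycle types governed by $K(g)(x_1)$ and $K(g)(x_\infty)$ together with the doubling coming from $t$—exactly as predicted by $K(g)\times\xi$.
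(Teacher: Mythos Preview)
Your proof is correct and follows essentially the same approach as the paper: both identify the composite $t\circ\Sigma(g)$ with the fibered product $g\times t$ in the category of \'etale covers of $\mathbb{P}^1\setminus\{0,1,\infty\}$ and then use that taking monodromy sends fibered products to external products of permutation representations. The paper packages this as a one-line categorical statement (``$K$ is an equivalence of categories, hence preserves products''), whereas you unpack the same argument concretely on the \'etale loci and spell out the identification $K(t)=\xi$; the content is the same.
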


It is not necessary for the purposes of Proposition~\ref{prop:RepFiberedProduct} that $g$ has at most simple
ramification over 0.  We do not need $\Sigma(g)$ to
be a Belyi function, because we consider the composite $t \circ \Sigma(g)$.

\begin{proof}
Let $\mathcal{C}$ be the category of \'{e}tale covers of $\mathbb{P}^1 \setminus \{0,1,\infty\}$.
The function $K$ is the object function of a contravariant functor from $\mathcal{C}$
to ${\bf FinSet}^{\FreeTwo}$, the category of finite permutation representations of $\FreeTwo$.
It is well-known that $K$ is in fact an equivalence of categories.  In particular,
$K$ preserves products. But, $t \circ \Sigma(g) = g \times t$ (in $\mathcal{C}$), and the conclusion
follows.
\end{proof}

\begin{proof}[Proof of Theorem~\ref{thm:PropOfSqct}(b)]
Let $f: X \to \mathbb{P}^1$ be a Belyi function of odd degree $n$,
let $(\tau_0,\tau_1,\tau_\infty) \in \Sqrt(f)$,
and let $\mu \dashv n$ be the cycle type of $\tau_1$.
Suppose that $k,c$ are odd positive integers such that $\mu$ has $c$ parts of size $k$
and no parts of size $2k$.  Let $m$ be the representation
of $\FreeTwo$ on $S_n$ that sends $x$ to $\tau_0$ and $y$ to $\tau_1$.
By Proposition~\ref{prop:RepProduct}, if a representation $m': \FreeTwo \to S_n$
satisfies $m \times \xi \cong m' \times \xi$, then in fact $m \cong m'$.

Suppose that $\tau' = (\tau_0',\tau_1',\tau_\infty') \in \Sqrt(f)$
and $m': \FreeTwo \to S_n$ is the corresponding representation.
It follows from Theorem~\ref{thm:SigmaSqrt}(b) and Proposition~\ref{prop:RepFiberedProduct} that $m' \times \xi
\cong K(f \circ t) \cong m \times \xi$, which implies that
$m \cong m'$.  Therefore, $(\tau_0',\tau_1',\tau_\infty')$
is conjugate to $(\tau_0,\tau_1,\tau_\infty)$.  Since the choice
of $\tau'$ was arbitrary, we have $|\Sqrt(f)| = 1$ and the result follows.
\end{proof}

\subsection{Proof of Part (c)}
\label{subsec:PropOfSqctC}

Let $n$ be an odd positive integer, and let $\psi,\mu \dashv n$.
We use the fact that a hyperelliptic curve admits a unique involution
with a genus 0 quotient in the proof of Theorem~\ref{thm:PropOfSqct}(a).

\begin{proof}[Proof of Theorem~\ref{thm:PropOfSqct}(c)]
Let $T_0$ denote the set of isomorphism classes of Belyi functions whose domains are $\mathbb{P}^1$.  Note
that $g(\psi_0,\psi_1,\psi_\infty)$ is the genus of a curve
that admits a Belyi function
with monodromy cycle type $(\psi_0,\psi_1,\psi_\infty)$,
if such a curve exists.
Therefore, by Theorem~\ref{thm:SigmaSqrt}(b),
it suffices to prove that the restriction of $\Sigma$ to $T_0$ is injective.

Consider two commutative squares
\[\begin{array}{ccc}
\begin{diagram}
X &\lTo^{\alpha} & X'\\
\dTo^g && \dTo_f \\
\mathbb{P}^1_t & \lTo_{t=\frac{4f}{(f+1)^2}} & \mathbb{P}^1_f
\end{diagram}
& \text{ and }
& \begin{diagram}
X &\lTo^{\alpha'} & X'\\
\dTo^{g'} && \dTo_f \\
\mathbb{P}^1_t & \lTo_{t=\frac{4f}{(f+1)^2}} & \mathbb{P}^1_f
\end{diagram}
\end{array}\]
where in both diagrams $X$ is the normalization of the fibered product
$\mathbb{P}^1_z \times_{\mathbb{P}^1_t} \mathbb{P}^1_f$, and the left morphisms are Belyi functions
of degree $n$.
Because a hyperelliptic curve of genus at least 2 admits a unique degree 2 function
to $\mathbb{P}^1$, there must be an automorphism $\beta$ of the top left copy
of $\mathbb{P}^1$ such that $\alpha' = \beta \circ \alpha$.  Hence,
we have $g \circ \alpha = t \circ f$ and $(g' \circ \beta) \circ \alpha = t \circ f$.
Because $\alpha$ is surjective, it follows that $g = g' \circ \beta$.
\end{proof}

\section{Proofs of the Orbit-Splitting Theorems and the lower bounds on \texorpdfstring{$\Cl(n)$ and $\Cl'(n)$}{Cl(n) and Cl'(n)}}
\label{sec:ProofMainLower}

In Section~\ref{subsec:HurwitzExistence}, we review
a result that guarantees the existence of Belyi functions with particular prescribed monodromy cycle types.  In Section~\ref{subsec:OrbitSplProof}, we prove the Orbit-Splitting Theorems using Theorem~\ref{thm:PropOfSqct}.  In Section~\ref{subsec:PrimitiveSubgroups}, we review some group-theoretic preliminaries that we use in the proofs of Theorems~\ref{thm:ClLowerBound} and~\ref{thm:Cl'LowerBound}, which we give in Section~\ref{subsec:LowerBoundProofs}.

\subsection{Hurwitz existence problem}
\label{subsec:HurwitzExistence}

We investigate Belyi functions with monodromy of fixed cycle type.  Let
$\mathcal{B}$ be the set of monodromy cycle types of Belyi functions.
Determining $\mathcal{B}$ is an unsolved case of the Hurwitz existence problem, which
deals with the possible sequences of monodromy cycle types of \'{e}tale covers
of arbitrary curves over $\mathbb{C}$ with removed points, but is a purely
group-theoretic question regarding finite permutation representations of
the fundamental groups of Riemann surfaces with points removed.

In the case of $\mathbb{P}^1 \setminus \{0,1,\infty\}$, the question is:
given a finite group $G$ and conjugacy
classes $c_0,c_1,c_\infty$, how many triples $(\sigma_0,\sigma_1,\sigma_\infty)$
are there of elements $\sigma_i \in c_i$ such that $\sigma_0\sigma_1\sigma_\infty = 1$?
When the finite group $G$ is replaced by a general linear group $GL_n(\mathbb{C})$, the analogous group-theoretic question is called the \emph{Deligne-Simpson problem}.
There is a formula for the number of solutions in terms of the characters of $G$ (see, for example,
Serre \cite[Theorem 7.2.1]{SerreGalois}), but this is not simple to evaluate in general.
Edmonds, Kulkarni, and Stong~\cite{EKS} construct a family of elements of $\mathcal{B}$.

\begin{thm}[\cite{EKS}, Proposition 5.2]
\label{thm:Existence}
Let $n$ be a positive integer, and let $\alpha, \beta \dashv n$.
Let $P$ be the total number of parts of $\alpha, \beta$.
A Belyi function with monodromy of cycle type $(\alpha,\beta,n)$ exists if and
only if $P \equiv n+1 \pmod{2}$ and $P \le n + 1$.
\end{thm}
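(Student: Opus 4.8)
The plan is to recast the statement group-theoretically: by the equivalences recalled in Section~\ref{sec:Previous}, a Belyi function with monodromy of cycle type $(\alpha,\beta,n)$ is the same datum as a pair $(\sigma_0,\sigma_1)\in S_n^2$ with $\sigma_0$ of cycle type $\alpha$, $\sigma_1$ of cycle type $\beta$, and $\sigma_0\sigma_1$ an $n$-cycle; transitivity of $\langle\sigma_0,\sigma_1\rangle$ is automatic, since the single element $\sigma_0\sigma_1$ already acts transitively on $[n]$. Writing $s,t$ for the numbers of parts of $\alpha,\beta$ so that $P=s+t$, the task becomes: such a factorization of an $n$-cycle exists iff $P\equiv n+1\pmod 2$ and $P\le n+1$.

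For necessity I would use two standard constraints. First, $\operatorname{sgn}(\sigma)=(-1)^{\ram(\sigma)}$, and applying $\operatorname{sgn}$ to $\sigma_0\sigma_1\sigma_\infty=1$ with $\sigma_\infty$ an $n$-cycle gives $(n-s)+(n-t)+(n-1)\equiv 0\pmod 2$, i.e.\ $P\equiv n+1\pmod 2$. Second, Riemann--Hurwitz shows that any curve carrying such a Belyi function has genus $\tfrac12\big((n-s)+(n-t)+(n-1)\big)-n+1=\tfrac12(n+1-P)$, which must be a nonnegative integer; nonnegativity is precisely $P\le n+1$, and integrality recovers the parity condition. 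Both are short computations with $\ram$.

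The content is sufficiency, which I would prove by induction on the genus $g=\tfrac12(n+1-P)\ge 0$, working in the language of bicolored maps: a factorization $\sigma_0\sigma_1\sigma_\infty=1$ with $\sigma_\infty$ an $n$-cycle is exactly a connected bicolored graph with $n$ edges embedded in a closed oriented surface of genus $g$ with a single face, with white (resp.\ black) vertex degree multiset $\alpha$ (resp.\ $\beta$). In the base case $g=0$, i.e.\ $P=n+1$, I want a bicolored \emph{plane tree} with white vertices of degrees $a_1,\dots,a_s$ and black vertices of degrees $b_1,\dots,b_t$; a bipartite tree with prescribed positive degree sequences on the two sides exists precisely when the two sides have equal degree sum equal to the number of edges $s+t-1$, and indeed $\sum a_i=\sum b_j=n=(s+t)-1$ here (one constructs it greedily, repeatedly attaching a leaf to a vertex that still needs more edges), after which reading off the cyclic orders of edges around vertices of each color produces $\sigma_0,\sigma_1$ with $\sigma_0\sigma_1$ the boundary $n$-cycle of the unique face. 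For the inductive step from genus $g-1$ to genus $g$ I would perform a handle-addition surgery: starting from a suitably chosen one-face genus-$(g-1)$ bicolored map realizing a degree distribution with two more white and two more black parts than the target, I cut along two arcs inside the single face and reglue across a handle, thereby merging a prescribed pair of white vertices and a prescribed pair of black vertices (so $P$ drops by $2$ and $g$ rises by $1$) while keeping the map connected with a single face. Equivalently, in permutation terms, one multiplies a genus-$(g-1)$ factorization of an $n$-cycle by well-chosen transpositions that fuse one pair of cycles in each of $\sigma_0$ and $\sigma_1$ without disturbing the property that the product is a single $n$-cycle.

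The main obstacle is exactly this inductive step: one must verify that the surgery keeps the map one-faced (equivalently, that $\sigma_0\sigma_1$ stays an $n$-cycle rather than splitting) and that the degree multisets land on the prescribed $\alpha$ and $\beta$, and one must dispose of the degenerate cases separately (e.g.\ $\alpha=1^n$, which forces $\beta=n$ and $g=0$, or the presence of many parts of size $1$, where fewer vertices are available to merge). An alternative would be to invoke the Frobenius character formula for the number of such triples and argue it is positive, but that sum is not tractable to evaluate directly, so the combinatorial induction — which is essentially the route of Edmonds, Kulkarni, and Stong — is the one I would pursue.
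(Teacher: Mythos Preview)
The paper does not actually prove this statement: Theorem~\ref{thm:Existence} is quoted from Edmonds, Kulkarni, and Stong~\cite{EKS}, and the surrounding text says only that ``necessity follows immediately from the Riemann--Hurwitz formula, and sufficiency is proven constructively.'' So there is no in-paper proof to compare against beyond that one-line summary.

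Your necessity argument is correct and matches the paper's description exactly. Your sufficiency sketch is in the right spirit --- a constructive induction on genus via one-face bicolored maps is indeed the route taken in~\cite{EKS} --- and your base case (bipartite plane trees with prescribed bidegree sequence) is fine. You are also right to flag the inductive step as the real issue: as written, the surgery (``multiply by well-chosen transpositions that fuse one pair of cycles in each of $\sigma_0$ and $\sigma_1$ while keeping $\sigma_0\sigma_1$ an $n$-cycle'') is asserted rather than verified, and the bookkeeping that guarantees the product remains a single cycle and that arbitrary target partitions $\alpha,\beta$ are reached is exactly where the work in~\cite{EKS} lies. So your proposal is a faithful outline of the cited proof, with the honest caveat that the inductive step is not yet a proof; since the paper itself defers entirely to~\cite{EKS}, that is as much as can be said here.
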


Necessity follows immediately from the Riemann-Hurwitz formula, and sufficiency
is proven constructively.  If one of the partitions is not $n$, the Riemann-Hurwitz condition
on the total number of parts of the three partitions is not in general sufficient.

\subsection{Proofs of the Orbit-Splitting Theorems}
\label{subsec:OrbitSplProof}
Fix a integer $n$ and partitions $\psi,\mu \dashv n$.  For
$\alpha, \beta \dashv n$,
let $S_{\alpha,\beta}$ be the set of isomorphism
classes of Belyi functions with monodromy of cycle type
$(\psi,\beta,\alpha)$.  Let
\[S = \bigcup_{(\psi,\beta,\alpha) \in M(\psi,\mu) \cap \mathcal{B}} S_{\alpha,\beta}
\qquad \text{and} \qquad
S_0 = \bigcup_{(\psi,\beta,\alpha) \in M_0(\psi,\mu) \cap \mathcal{B}} S_{\alpha,\beta}.\]
Let $f \in \Sigma(S) \cup \Sigma(S_0)$.
Proposition~\ref{prop:MonodromyProduct} implies that $f$ is
unbranched outside $\{0,1,\infty\}$
and has monodromy of cycle type $(\psi,\mu,\psi)$.
By Propositions~\ref{prop:RepProduct}(a) and~\ref{prop:RepFiberedProduct}, the monodromy
of $f$ acts transitively on the fiber above the base-point,
and it follows that the domain of $f$ is irreducible and that $f$ is a Belyi function.
The Orbit-Splitting Theorem, in its ordinary and alternate forms, follow from Theorem~\ref{thm:PropOfSqct}
parts (b) and (c), respectively.

\begin{proof}[Proof of the Orbit-Splitting Theorem]
By Theorem~\ref{thm:PropOfSqct}(b), we have $|\Sqct(f)|=1$
for all $f \in \Sigma(S)$.  By construction, $\Sqct(f)$
can take any value in $M(\psi,\mu) \cap \mathcal{B}$
as $f$ ranges over $S$.  Because $\Sqct$ is $\GQ$-invariant,
the theorem follows.
\end{proof}

\begin{proof}[Proof of the Orbit-Splitting Theorem, Alternate Form]
By Theorem~\ref{thm:PropOfSqct}(b) and the
construction of $S_0$, $\Sqct(f)$ contains
exactly one element $(\psi_0,\psi_1,\psi_\infty)$ such that
$g(\psi_0,\psi_1,\psi_\infty) = 0$ for all $f \in \Sigma(S_0)$.
Denote this element by $R(f)$.  Because $\Sqct$ is $\GQ$-invariant,
so is $R(f)$.  By construction of $S_0$, $R(f)$ can take all values
in $M'(\psi,\mu) \cap \mathcal{B}$ as $f$ ranges over $S_0$,
and the theorem follows.
\end{proof}

By construction, the assertion that $M(\psi,\mu) \subseteq \mathcal{B}$
would not violate the Riemann-Hurwitz formula.  The fact that $M(\psi,\mu) \subseteq \mathcal{B}$
when $\psi = n \dashv n$ is immediate by Theorem~\ref{thm:Existence}, and the $n$-cycle Orbit-Splitting Theorems follow.

\subsection{Primitive subgroups of \texorpdfstring{$S_n$}{S\_n}}
\label{subsec:PrimitiveSubgroups}

In order to control the monodromy groups of the Belyi functions that we consider, we need a result on primitive subgroups of $S_n$, from
Dixon-Mortimer \cite{DixonMortimer} but due to Jordan.  We also
need a result describing permutation groups that contain short length cycles.

\begin{thm}[\cite{DixonMortimer}, Example 3.3.1]
\label{thm:PrimitiveDegree4}
Let $n \ge 9$, let $G$ be a subgroup of $S_n$,
and suppose that there exists a nonidentity $\sigma \in G$
with at least $n-4$ fixed points.  If $G$ does
not contain a transposition or a 3-cycle, then $G$ is not primitive.
\end{thm}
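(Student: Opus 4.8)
\emph{Proof proposal.} The plan is to argue by contradiction, assuming $G$ is primitive and deducing that $G$ must contain a $3$-cycle (in fact $G\supseteq A_n$), against the hypothesis. First I would extract the usable content of $\sigma$: since $\sigma\neq 1$ fixes at least $n-4$ points, its support $\Delta$ has $2\le|\Delta|\le 4$, and a nonidentity permutation moving exactly $2$ (resp.\ exactly $3$) points is a transposition (resp.\ a $3$-cycle). As neither occurs in $G$, we get $|\Delta|=4$ and $\sigma$ is a $4$-cycle or a product of two disjoint transpositions; replacing $\sigma$ by $\sigma^2$ in the first case, we may assume $G$ contains an element $\tau$ of cycle type $2^2$ on a $4$-point set $\Delta$. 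In particular the minimal degree of $G$ (the least number of points moved by a nonidentity element) is exactly $4$: at most $4$ because of $\tau$, and more than $3$ because $G$ has no element moving $2$ or $3$ points.

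The second step is to invoke Jordan's classical theorem on primitive groups of small minimal degree (this is exactly the content packaged in \cite{DixonMortimer}, \S3.3): a primitive subgroup of $S_n$ whose minimal degree is at most $4$ either contains $A_n$ or has $n\le 8$. Since $n\ge 9$, this forces $G\supseteq A_n$; but $A_n$ contains $3$-cycles (as $n\ge 3$), so $G$ contains a $3$-cycle, contradicting the hypothesis. Hence $G$ is imprimitive. Note that the ``no transposition or $3$-cycle'' hypothesis is doing exactly the work of ruling out the small-degree exceptional primitive groups of minimal degree $\le 4$, all of which contain transpositions or $3$-cycles or are $A_m/S_m$.

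If one wants to avoid quoting the classification, the self-contained route is the standard ``Jordan set'' / overlapping-conjugates argument starting from $\tau=(ab)(cd)$. Primitivity forbids the supports of the $G$-conjugates of $\tau$ from forming a block system: the normal subgroup $\langle\tau^G\rangle$ is transitive, so these supports cover $\Omega$, and if they were pairwise equal-or-disjoint they would partition $\Omega$ into blocks of size $4$ with $1<4<n$. Hence there are conjugates $\tau_1=\tau$ and $\tau_2$ of $\tau$ with distinct but overlapping supports, meeting in $1$, $2$, or $3$ points, and one analyzes $\langle\tau_1,\tau_2\rangle$ by cases: e.g.\ if they share a single point $a$ then $\tau_1\tau_2$ has cycle type $3\,2\,2$ and $(\tau_1\tau_2)^2$ is a $3$-cycle; several $2$- and $3$-point overlap patterns likewise yield a transposition or $3$-cycle directly, and the rest are pushed, by replacing $\tau_2$ with a suitable product, toward either an element moving fewer than $4$ points or a smaller configuration, contradicting minimality.

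The main obstacle is precisely this last bookkeeping in the self-contained version: the patterns where $\tau_1$ and $\tau_2$ share an entire transposition pair (so their product is again a $2^2$ element, or a $4$-cycle times a transposition) do not produce a short cycle in one step and require organizing the cases by an induction on degree or a minimal-counterexample argument — which is exactly what makes the black-box appeal to Jordan's theorem in the second step so much shorter, and why I would present that route as the primary proof.
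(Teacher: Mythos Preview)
The paper does not prove this statement at all: it is quoted as a black-box result from Dixon--Mortimer (their Example 3.3.1), alongside their Theorem 3.3E, and then used to derive Proposition~\ref{prop:p,22}. There is therefore no proof in the paper to compare your proposal against; you are supplying strictly more than the paper does.

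On the substance of your proposal: the reduction step is clean and correct --- ruling out $|\Delta|\in\{2,3\}$ by hypothesis and squaring a $4$-cycle to get a $2^2$ element, so that the minimal degree of $G$ is exactly $4$. However, your ``primary'' route is essentially circular. You invoke ``Jordan's classical theorem on primitive groups of small minimal degree \ldots\ packaged in \cite{DixonMortimer}, \S3.3,'' but the statement you are asked to prove \emph{is} Example 3.3.1 of that section; the minimal-degree-$4$ case of Jordan's theorem and this example are the same assertion, so you are quoting the result to establish itself.

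Your self-contained overlapping-conjugates sketch is the honest route, and the pieces you check are right: the supports of conjugates of $\tau$ are permuted by $G$ and cover $\Omega$ (since the normal closure of $\tau$ is transitive in a primitive group), so they cannot be pairwise equal-or-disjoint; and in the single-point-overlap case $\tau_1\tau_2$ has type $3\,2\,2$, whence $(\tau_1\tau_2)^2$ is a $3$-cycle. You also correctly identify the real obstacle: when the two supports share a full transposition pair the product is again of type $2^2$ (or a $4$-cycle together with a transposition), and no short cycle appears in one step. Finishing requires the full case analysis or Jordan-set induction that Dixon--Mortimer actually carry out; as written this is a correct outline with an acknowledged gap, not yet a complete proof.
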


\begin{thm}[\cite{DixonMortimer}, Theorem 3.3E]
\label{thm:Primitive3Cycle}
Let $q$ be a prime, and let $n > q+2$.  If a primitive subgroup $G$
of $S_n$ contains a $q$-cycle, then $G$ contains $A_n$.
\end{thm}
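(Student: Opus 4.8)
The statement is a classical consequence of Jordan's theorem on primitive groups containing a cycle of prime length. The plan is to invoke Jordan's structural result directly and then eliminate the exceptional cases by the degree hypothesis $n > q+2$. Concretely, I would argue as follows. Let $G \le S_n$ be primitive and suppose $G$ contains a $q$-cycle $\sigma$ for a prime $q$. Jordan's theorem states that a primitive permutation group of degree $n$ that contains a cycle of prime length $p$ fixing at least three points is either $A_n$ or $S_n$, or is a group of affine or Mathieu-type bounded degree. Here $\sigma$ fixes exactly $n - q$ points, so the hypothesis "fixes at least three points" becomes $n - q \ge 3$, i.e.\ $n \ge q + 3$, which is exactly $n > q+2$. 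Thus the degree hypothesis is precisely what is needed to put us in the regime of Jordan's theorem.

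The key steps, in order, are: (1) record that $\sigma$ has support of size $q$ and fixes $n-q \ge 3$ points; (2) apply the version of Jordan's theorem (as stated in Dixon--Mortimer, with reference to the classification of primitive groups with a cycle) to conclude that $G$ contains $A_n$ unless $(G,n)$ is one of the finitely many exceptional pairs — these are the $2$-transitive affine groups $\mathrm{AGL}_d(p)$ acting on $p^d$ points and the Mathieu groups $M_{11}, M_{12}, M_{23}, M_{24}$, together with $M_{11}$ on $12$ points and $M_{12}$ on $12$ points; (3) check, case by case, that in each exceptional pair the element orders available exclude a $q$-cycle with the support/fixed-point constraint $n-q\ge 3$, or that the relation between $q$ and $n$ forced by the exceptional group violates $n > q+2$. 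For instance, in $\mathrm{AGL}_d(p)$ a prime-order element either is a translation (which fixes no point, so $n-q = 0 < 3$, contradiction) or lies in the point stabilizer $\mathrm{GL}_d(p)$, where an element acting as a single $q$-cycle on the remaining points can be analyzed via its rational canonical form and shown to fix too few or too many points relative to $q$; in the Mathieu cases one simply reads off the cycle types from the character tables and checks no $q$-cycle of the required shape occurs.

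The main obstacle is step (3): handling the affine exceptional groups cleanly. The Mathieu groups are a finite check against known cycle-type data, but $\mathrm{AGL}_d(p)$ is an infinite family, so one needs the genuinely structural observation that an element of prime order $q$ in $\mathrm{AGL}_d(p)$ acting on $\mathbb{F}_p^d$ fixes either $0$ points (if it has a nontrivial translation part) or $p^e$ points for some $0 \le e < d$ (the fixed space of its linear part, when it is conjugate into $\mathrm{GL}_d(p)$); combined with the fact that such an element is a single $q$-cycle on its support only when $q$ equals the size of a single orbit, a short divisibility/counting argument forces a contradiction with $n = p^d$ and $n > q+2$. I would present this as the technical heart, citing Jordan's theorem (or its modern proof via the O'Nan--Scott classification, which is how Dixon--Mortimer treat it) for the reduction to the exceptional list, and then dispatch the list. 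Since the paper only needs the conclusion "$G$ contains $A_n$," it suffices to cite \cite{DixonMortimer} for this and I would keep the argument at the level of a sketch, the genuine content being the classification of primitive groups containing a prime-length cycle.
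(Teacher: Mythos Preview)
The paper does not prove this statement at all: Theorem~\ref{thm:Primitive3Cycle} is simply quoted from Dixon--Mortimer as a black-box input, so there is no ``paper's own proof'' to compare against. Your sketch is therefore not in conflict with anything in the paper.

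That said, your outline is heavier than necessary. The classical Jordan theorem for a \emph{prime}-length cycle is entirely elementary and predates the classification of finite simple groups: once a primitive group $G \le S_n$ contains a $q$-cycle with $q$ prime and $n - q \ge 3$, a short combinatorial argument (using that the support of the $q$-cycle is a Jordan set, and that primitive groups with a Jordan set of the right size are $(n-q+1)$-transitive, hence contain $A_n$) already gives the conclusion, with no exceptional list to eliminate. The Mathieu and affine exceptions you describe arise in the classification of primitive groups containing an \emph{arbitrary} (not necessarily prime-length) cycle, which is a CFSG-dependent result and not what Dixon--Mortimer's Theorem~3.3E asserts or requires. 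So your step~(3) is unnecessary, and your step~(2) should invoke the elementary Jordan argument rather than an O'Nan--Scott reduction. Since the paper only needs the conclusion and cites the textbook, this distinction is irrelevant for the paper's purposes, but if you were actually to supply a proof you would want the lighter, self-contained version.
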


The form that we will need is the following proposition, which is immediate
from Theorems~\ref{thm:PrimitiveDegree4} and~\ref{thm:Primitive3Cycle}.

\begin{prop}
\label{prop:p,22}
Let $p > 7$ be a prime, and let $G$ be a subgroup of $S_p$
that contains a $p$-cycle and a double transposition.  Then
$G$ contains $A_p$.
\end{prop}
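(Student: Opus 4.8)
The plan is to deduce Proposition~\ref{prop:p,22} by feeding the hypotheses into Theorems~\ref{thm:PrimitiveDegree4} and~\ref{thm:Primitive3Cycle}, with the only real work being to rule out the imprimitive case. First I would record that since $p > 7$ is prime, $p \geq 11$, so in particular $p \geq 9$ and Theorem~\ref{thm:PrimitiveDegree4} applies. The double transposition $\sigma \in G$ is a nonidentity element with exactly $p - 4 \geq n - 4$ fixed points. Moreover $G$ contains no transposition and no $3$-cycle: a transposition or $3$-cycle together with a $p$-cycle would already generate $S_p$ or $A_p$ respectively (and in any case $G$ is presumed only to contain a $p$-cycle and a double transposition, but the cleanest route is to split into cases on whether $G$ contains a transposition or $3$-cycle).

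I would organize it as follows. \textbf{Case 1:} $G$ contains a transposition. Then, being transitive (it contains the $p$-cycle) and containing a transposition, a standard argument shows $G = S_p$, so certainly $G \supseteq A_p$. \textbf{Case 2:} $G$ contains a $3$-cycle. Then $G$ is transitive and contains a $3$-cycle; I would invoke Theorem~\ref{thm:Primitive3Cycle} with $q = 3$ once primitivity is known, or more directly use the classical fact that a primitive group containing a $3$-cycle contains $A_n$ — but to apply it I first need primitivity, which follows as in Case~3. \textbf{Case 3:} $G$ contains neither a transposition nor a $3$-cycle. Then Theorem~\ref{thm:PrimitiveDegree4} (applicable since $p \geq 9$ and the double transposition fixes $p - 4$ points) forces $G$ to be imprimitive. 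But $G$ contains a $p$-cycle and $p$ is prime, so any system of imprimitivity would have block sizes dividing $p$, hence trivial — contradiction. Therefore $G$ is primitive, and now Theorem~\ref{thm:Primitive3Cycle} applied with the prime $q = p$ and $n = p$... requires $n > q + 2$, which fails, so instead I apply it via the double transposition: a primitive group of prime degree containing a $p$-cycle — here I should instead note that a primitive group containing a double transposition and of degree $p > 7$ must contain $A_p$, which again reduces to: primitive plus a short cycle. The cleanest finish is to observe that a primitive group $G \leq S_p$ containing a $p$-cycle is either $A_p$, $S_p$, or one of the affine/projective groups $AGL_1(p)$, $PSL_2(p)$, etc., by Burnside's theorem on transitive groups of prime degree; none of the proper ones contains a double transposition for $p > 7$ (the non-identity elements of $AGL_1(p)$ fix at most one point), so $G \supseteq A_p$.

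The main obstacle, as the organization above reveals, is that Theorem~\ref{thm:Primitive3Cycle} as stated needs $n > q + 2$, which cannot hold when $q = n = p$; so the double transposition cannot be used directly as a ``$q$-cycle'' input. I expect the intended argument routes through Theorem~\ref{thm:PrimitiveDegree4} to get primitivity (using that a $p$-cycle, $p$ prime, precludes nontrivial blocks), and then uses Theorem~\ref{thm:Primitive3Cycle} after showing $G$ must contain a $3$-cycle — but that last implication is not obvious and may instead require a direct appeal to the classification of transitive groups of prime degree (Burnside). I would therefore check carefully whether the authors intend a short self-contained deduction from the two cited theorems alone, in which case the key insight must be that a double transposition, under primitivity, can be manipulated (e.g., taking a suitable power of a conjugate product) to produce a $3$-cycle, or whether they silently invoke the prime-degree classification; the write-up should make that choice explicit.
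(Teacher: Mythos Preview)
Your Cases 1--3 already contain a complete proof, and it is essentially the paper's argument; you simply fail to notice that you have finished. The paper argues in two lines: since $G$ contains a $p$-cycle with $p$ prime, $G$ is (transitive and hence) primitive; the contrapositive of Theorem~\ref{thm:PrimitiveDegree4} then forces $G$ to contain a transposition or a $3$-cycle; finally Theorem~\ref{thm:Primitive3Cycle} with $q=2$ or $q=3$ (and $n=p>7>q+2$) gives $G\supseteq A_p$. Your case split reproduces exactly this, only contrapositively: Case~3 is impossible because Theorem~\ref{thm:PrimitiveDegree4} would make $G$ imprimitive while the $p$-cycle makes it primitive; hence you are in Case~1 or Case~2, where (using primitivity, which you correctly note follows from the $p$-cycle) Theorem~\ref{thm:Primitive3Cycle} with $q=2$ or $q=3$ finishes the job immediately.

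The confusion begins after your ``contradiction'' in Case~3: instead of observing that Cases~1 and~2 now cover everything, you try to press on and apply Theorem~\ref{thm:Primitive3Cycle} with $q=p$, find that $n>q+2$ fails, and reach for Burnside's classification of transitive groups of prime degree. None of this is needed. Theorem~\ref{thm:Primitive3Cycle} is never applied with $q=p$; the short cycle it eats is the $2$-cycle or $3$-cycle produced by Theorem~\ref{thm:PrimitiveDegree4}, not the $p$-cycle. Drop the last paragraph and the Burnside detour, and tighten the write-up to: primitivity from the $p$-cycle, then Theorem~\ref{thm:PrimitiveDegree4} (contrapositive), then Theorem~\ref{thm:Primitive3Cycle} with $q\in\{2,3\}$.
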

\begin{proof}
A subgroup of $S_p$ that contains a $p$-cycle is primitive, and
a double transposition in $S_p$ has $p-4$ fixed points.  By Theorem~\ref{thm:PrimitiveDegree4},
$G$ contains a 2-cycle or a 3-cycle.  In both cases, Theorem~\ref{thm:Primitive3Cycle}
implies that $G$ contains $A_p$, as claimed.
\end{proof}

\begin{rem}[Noam Elkies, private communication]
The proposition is false for $p = 5, 7$.
For $p = 5$, one can take $G = D_{10}$, and for $p = 7$, one can take $G = PGL_3(\mathbb{Z}/2\mathbb{Z})$.
\end{rem}

\subsection{Proofs of Theorems~\ref{thm:ClLowerBound} and~\ref{thm:Cl'LowerBound}}
\label{subsec:LowerBoundProofs}

We derive Theorems~\ref{thm:ClLowerBound} and~\ref{thm:Cl'LowerBound} from the Orbit-Splitting Theorem and
the results quoted in the preceding section.  First, we begin with a
few computational lemmata, whose proofs are deferred to Appendix~\ref{app:CompLemmata}.

For positive integers $t$ and $k$ with $k \le t$, let
\[f_t(k) = \left\lfloor \frac{4t+2}{2k-1}\right\rfloor.\]
For a positive integer $t$, let
\[n_0(t) = 2t+1 + \sum_{i=1}^{t} 2(2k-1)(f_t(k)-1).\]

\begin{lemma}
\label{lemma:BoundN_0(t)}
For all positive integers $t$, we have
\[4t^2 + 12t + 1< n_0(t) < 6(t+1)^2 - 4.\]
\end{lemma}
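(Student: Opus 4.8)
The plan is to prove Lemma~\ref{lemma:BoundN_0(t)} by direct estimation of the sum $\sum_{i=1}^{t} 2(2k-1)(f_t(k)-1)$, bounding $f_t(k) = \lfloor (4t+2)/(2k-1)\rfloor$ above and below. (Note the summand is written with index $i$ but the body uses $k$; I will take $k=i$ throughout, as the paper clearly intends.) For the lower bound, the key observation is that $f_t(k) - 1 \ge \frac{4t+2}{2k-1} - 2 = \frac{4t+2 - 2(2k-1)}{2k-1}$ whenever this is nonnegative, so $2(2k-1)(f_t(k)-1) \ge 2(4t+2) - 4(2k-1)$ for $k$ in the range where $f_t(k) \ge 2$, i.e. roughly $2k-1 \le (4t+2)/2$, that is $k \le t+1$. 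Summing the cruder bound $2(2k-1)(f_t(k)-1) \ge 2(4t+2-2(2k-1)) - 2(2k-1) = 2(4t+2) - 6(2k-1)$ — valid because $\lfloor x\rfloor > x-1$, so $2(2k-1)(f_t(k)-1) > 2(2k-1)(\frac{4t+2}{2k-1}-2) = 2(4t+2)-4(2k-1)$, and I only need to be careful about the terms where $f_t(k)=1$ (which contribute $0$, harmlessly, since the true value $0$ exceeds the possibly-negative crude bound). So $n_0(t) > 2t+1 + \sum_{k=1}^{t}\bigl(2(4t+2)-4(2k-1)\bigr) = 2t+1 + 2t(4t+2) - 4t^2 = 2t+1+8t^2+4t-4t^2 = 4t^2+6t+1$, which I then need to push to $4t^2+12t+1$; this requires a sharper accounting for small $k$ (where $f_t(k)$ is large and the floor loses almost nothing), so I would peel off, say, $k=1,2,3$ and estimate those exactly or near-exactly while using the crude bound for the tail.

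For the upper bound I use $f_t(k) - 1 \le \frac{4t+2}{2k-1} - 1 = \frac{4t+3-2k}{2k-1}$, so $2(2k-1)(f_t(k)-1) \le 2(4t+3-2k) = 8t+6-4k$, but this is only valid when $f_t(k)\ge 1$, i.e. $2k-1 \le 4t+2$, i.e. $k \le 2t+1$; since we sum only to $k=t$, this always holds and moreover for $k>t$ there would be no terms. Hence $n_0(t) \le 2t+1 + \sum_{k=1}^{t}(8t+6-4k) = 2t+1 + t(8t+6) - 4\cdot\frac{t(t+1)}{2} = 2t+1 + 8t^2+6t - 2t^2-2t = 6t^2+6t+1$. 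I then need to check $6t^2+6t+1 < 6(t+1)^2 - 4 = 6t^2+12t+2$, i.e. $0 < 6t+1$, which is true for all positive $t$. So the upper bound is actually the easy direction and essentially immediate from $\lfloor x\rfloor \le x$.

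The main obstacle is the lower bound, specifically squeezing out the coefficient $12t$ rather than the $6t$ that the crudest uniform estimate gives. The fix is not deep but is fiddly: for small $k$ the quantity $f_t(k)$ is of order $t/k$, much larger than $2$, and the bound $f_t(k)-1 > \frac{4t+2}{2k-1}-2$ throws away a full unit ($2(2k-1)$ in the weighted sum) that is wasteful when $2k-1$ is small; instead one should use $f_t(k) - 1 > \frac{4t+2}{2k-1} - 2$ only for the bulk and, for $k \le c$ for a small absolute constant $c$, use the genuinely available value. I expect that handling $k=1$ alone — where $f_t(1) = 4t+2$ exactly, contributing exactly $2\cdot 1\cdot(4t+1) = 8t+2$ versus the $2(4t+2)-4 = 8t$ the crude bound gives — together with $k=2,3$ already recovers enough: each small-$k$ term gains roughly $2(2k-1)$ over the crude estimate minus a fractional loss, and summing $2(2k-1)$ over a few values of $k$ contributes $O(t)$-free constants plus, more importantly, the terms themselves are $\Theta(t)$ each, so keeping three of them exactly adds $\approx 3\cdot 8t = 24t$ worth of mass that the crude bound underestimated by about $6t$. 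I would organize this as: (i) split $n_0(t) = 2t+1 + (\text{terms }k\le 3) + (\text{terms }4\le k\le t)$; (ii) compute the $k\le 3$ terms using exact or one-line floor estimates; (iii) bound the tail by $\sum_{k=4}^{t}\bigl(2(4t+2)-4(2k-1)\bigr)$ using $\lfloor x\rfloor > x-1$; (iv) add up and verify the inequality $> 4t^2+12t+1$, checking the finitely many small $t$ (where asymptotic slack is thin) by hand if needed. The routine arithmetic in steps (ii)–(iv) I would relegate to the appendix as the lemma statement promises.
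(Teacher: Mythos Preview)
Your upper-bound argument is correct and is exactly the paper's method: use $\lfloor x\rfloor\le x$ to get $n_0(t)\le 2t+1+\sum_{k=1}^t 2(4t+3-2k)=6t^2+6t+1<6(t+1)^2-4$. (The paper's printed proof contains an arithmetic slip and records $6t^2+12t+1$ here, but your value $6t^2+6t+1$ is the correct one and still suffices.)

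For the lower bound, your crude computation $n_0(t)>4t^2+6t+1$ is again correct, and again this is precisely the paper's approach: the paper applies $\lfloor x\rfloor>x-1$ termwise and claims to arrive at $4t^2+12t+1$, but that is another arithmetic error; the true output of the termwise bound is your $4t^2+6t+1$. Your plan to recover the missing $6t$ by peeling off $k=1,2,3$ and treating them exactly cannot work, and the sentence ``keeping three of them exactly adds $\approx 3\cdot 8t=24t$ worth of mass that the crude bound underestimated by about $6t$'' is where the reasoning slips: the crude bound already captures the $\Theta(t)$ bulk of each small-$k$ term, and the gain from replacing the crude estimate by the exact value is only the floor error, which is at most $2(2k-1)=O(1)$ per term. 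Peeling off any fixed number of terms therefore improves the bound by $O(1)$, not by $6t$.

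In fact the stated lower bound is simply false for small $t$: for $t=1$ one has $f_1(1)=6$ and $n_0(1)=3+2\cdot 5=13$, while $4t^2+12t+1=17$; similarly $n_0(2)=35<41$, $n_0(3)=61<73$, $n_0(4)=107<113$. So your ``check small $t$ by hand'' step would fail, and no argument can establish the inequality as written. The weaker bound $n_0(t)>4t^2+6t+1$ that both you and (correctly executed) the paper's method yield is what the downstream applications actually need, and your derivation of it is fine.
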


\begin{lemma}
\label{lemma:BoundSumf}
Let $t$ be a positive integer.  Then, we have
\[\sum_{k=1}^t (f_t(k)-1) \le \frac{n_0(t)}{4}.\]
\end{lemma}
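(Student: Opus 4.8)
The plan is to reduce the claimed inequality to the lower bound on $n_0(t)$ already recorded in Lemma~\ref{lemma:BoundN_0(t)}. First I would isolate the two elementary facts that drive the estimate: that $f_t(1) = \lfloor 4t+2\rfloor = 4t+2$ exactly, and that $f_t(k)-1 \ge 0$ for every $1 \le k \le t$ (since $2k-1 \le 2t-1 < 4t+2$ forces $f_t(k) \ge 1$). The second fact is what lets me compare each summand $f_t(k)-1$ on the left with the matching summand $2(2k-1)(f_t(k)-1)$ appearing in $n_0(t)$, term by term.

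Next I would split off the $k=1$ contribution. Using $f_t(1) = 4t+2$, the $k=1$ term is exactly $f_t(1)-1 = 4t+1$, whereas for every $k \ge 2$ one has $2(2k-1) \ge 6$, and hence $f_t(k)-1 \le \tfrac{1}{6}\bigl(2(2k-1)(f_t(k)-1)\bigr)$. Summing over $k \ge 2$ and using $\sum_{k=2}^{t} 2(2k-1)(f_t(k)-1) = n_0(t) - (2t+1) - 2(4t+1) = n_0(t) - 10t - 3$, I obtain
\[
\sum_{k=1}^{t}\bigl(f_t(k)-1\bigr) \;\le\; (4t+1) + \frac{1}{6}\bigl(n_0(t) - 10t - 3\bigr) \;=\; \frac{n_0(t)}{6} + \frac{14t+3}{6}.
\]
Thus the lemma will follow as soon as $\tfrac{n_0(t)}{6} + \tfrac{14t+3}{6} \le \tfrac{n_0(t)}{4}$, which rearranges to $n_0(t) \ge 28t + 6$.

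Finally I would invoke Lemma~\ref{lemma:BoundN_0(t)}, which gives $n_0(t) > 4t^2 + 12t + 1$, and note that $4t^2 + 12t + 1 \ge 28t+6$ is equivalent to $4t^2 - 16t - 5 \ge 0$, which holds for all $t \ge 5$. The finitely many remaining cases $t \in \{1,2,3,4\}$ are then settled by computing the $f_t(k)$ explicitly and checking the inequality numerically. I do not anticipate a genuine obstacle: the only two points deserving care are that one must use the exact value $f_t(1) = 4t+2$ (the naive bound $f_t(1) \le 4t+2$ happens to be an equality, so nothing is lost), and that the decisive coefficient is $2(2k-1) \ge 6$ for $k \ge 2$ rather than merely $\ge 4$ — this is precisely what upgrades the trivial factor $\tfrac12$ to the required $\tfrac14$. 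The small-$t$ verification is the only mildly tedious step.
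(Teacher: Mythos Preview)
Your large-$t$ argument is correct and is cleaner than the paper's. The paper bounds $f_t(k)-1$ above by $\tfrac{4t+1}{2k-1}-1$ (which, incidentally, already fails at $k=1$, where $f_t(1)-1=4t+1$), then controls the resulting harmonic-type sum analytically, and finally checks $t\le 7$ by hand. Your device---separating the $k=1$ term exactly and using $2(2k-1)\ge 6$ for $k\ge 2$ to compare the remaining summands termwise against the sum defining $n_0(t)$---avoids the analytic estimate entirely and reduces cleanly to the condition $n_0(t)\ge 28t+6$, which does hold for all $t\ge 5$.

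The gap is in the final step. You write that the cases $t\in\{1,2,3,4\}$ ``are then settled by computing the $f_t(k)$ explicitly and checking the inequality numerically,'' but that verification \emph{fails}: the stated inequality is false for $t\in\{1,2,3\}$. For $t=1$ one has $f_1(1)=6$, so the left side equals $5$, while $n_0(1)=3+2\cdot 5=13$ and $n_0(1)/4=13/4<5$; for $t=2$ the left side is $11$ against $n_0(2)/4=35/4$; for $t=3$ it is $17$ against $n_0(3)/4=61/4$. Only from $t=4$ onward (left side $25$, right side $107/4$) does the inequality hold. So the lemma as stated cannot be proved, and a hypothesis such as $t\ge 4$ is needed. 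The paper's own proof makes the same oversight, asserting that one ``can easily verify the lemma for $t\le 7$.'' Your argument does establish the corrected statement: it covers $t\ge 5$, and $t=4$ is then a single explicit check.
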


\begin{lemma}
\label{lemma:BoundProdf}
Let $t$ be a positive integer. Then, we have
\[\prod_{k=1}^t f_t(k) > 2^{2t}.\]
\end{lemma}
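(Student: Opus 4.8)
The plan is to replace each floor $f_t(k)=\lfloor (4t+2)/(2k-1)\rfloor$ by a clean rational lower bound, and then telescope the resulting product into a ratio of factorials that evaluates to essentially $2^{2t}$.

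First I would record the elementary inequality $\lfloor a/b\rfloor \ge (a-b+1)/b$ for positive integers $a,b$: writing $a=qb+r$ with $0\le r<b$, one has $\lfloor a/b\rfloor = q$ while $(a-b+1)/b = q + (r+1-b)/b \le q$ since $r+1-b\le 0$. Applying this with $a=4t+2$ and $b=2k-1$ gives, for $1\le k\le t$,
\[
f_t(k)\ \ge\ \frac{4t+2-(2k-1)+1}{2k-1}\ =\ \frac{2(2t+2-k)}{2k-1},
\]
and the numerator $2t+2-k\ge t+2$ is positive in this range, so the bound is meaningful.

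Next I would multiply these bounds over $k=1,\dots,t$. The two products that appear are standard: $\prod_{k=1}^t (2t+2-k) = (t+2)(t+3)\cdots(2t+1) = (2t+1)!/(t+1)!$, and $\prod_{k=1}^t (2k-1) = (2t)!/(2^t\,t!)$ (the product of the first $t$ odd numbers). Hence
\[
\prod_{k=1}^t f_t(k)\ \ge\ 2^t\cdot\frac{(2t+1)!/(t+1)!}{(2t)!/(2^t\,t!)}\ =\ 2^{2t}\cdot\frac{(2t+1)!}{(2t)!}\cdot\frac{t!}{(t+1)!}\ =\ 2^{2t}\cdot\frac{2t+1}{t+1}.
\]
Since $\frac{2t+1}{t+1}>1$ for every $t\ge 1$, this yields the desired strict inequality $\prod_{k=1}^t f_t(k) > 2^{2t}$, completing the proof.

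There is no genuine obstacle here: the only thing to check with care is the factorial bookkeeping in the middle step, which is routine. (If one prefers to avoid the floor inequality entirely for small $k$, one can instead use the exact values $f_t(1)=4t+2$ and $f_t(2)=\lfloor(4t+2)/3\rfloor$ together with $f_t(k)\ge 2$ for all $k\le t$, but the factorial route above is cleaner and uniform in $t$.)
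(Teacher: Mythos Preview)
Your proof is correct, and it is genuinely cleaner than the paper's. Both arguments start by replacing the floor by a rational lower bound and then multiplying, but the paper uses the cruder estimate $f_t(k) > \frac{4t+2}{2k-1}-1 = \frac{4t+3-2k}{2k-1}$, whose odd numerators lead to a ratio of double factorials $\frac{(4t+1)!!}{(2t+1)!!(2t-1)!!}$; this is then rewritten as $\frac{1}{2}\binom{4t+2}{2t+1}\big/\binom{2t}{t}$ and bounded below by $2^{2t}$ via Stirling's formula with explicit error terms. Your sharper integer inequality $\lfloor a/b\rfloor \ge (a-b+1)/b$ gives $f_t(k)\ge \frac{2(2t+2-k)}{2k-1}$, and the extra factor of $2$ in each numerator makes the product collapse exactly to $2^{2t}\cdot\frac{2t+1}{t+1}$, so Stirling is unnecessary. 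The trade-off is only that your bound is not strict termwise, so the strict inequality comes at the end from $\frac{2t+1}{t+1}>1$ rather than at the start; this is harmless. Your route is the more elementary one and yields the slightly stronger conclusion $\prod_k f_t(k) \ge 2^{2t}\cdot\frac{2t+1}{t+1}$.
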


\begin{proof}[Proof of Theorem~\ref{thm:ClLowerBound}]
Fix a positive integer $t$, and let $n = n_0(t)$.
We prove a lower bound on $\Cl(n)$ that will imply
the theorem.  Let $\psi = n \dashv n$. Define the partition $\mu \dashv n$ to have $2f(k)-2$ parts
of size $2k-1$ for $1 \le k \le t$ and 1 part of size $2t + 1$.

We claim that
\begin{equation}
\label{eq:M'f}
\left|M'(\psi,\mu)\right| \ge \prod_{k=1}^t f_t(k).
\end{equation}
Let $S$ be the set of tuples $(v_0,v_1,\ldots,v_n)$ such that
$f_t(k)-1 \le v_{2k-1} \le 2f_t(k)-2$ for all $1 \le k \le t$, $v_{2t+1} = 1$,
$v_i = 0$ for all $i > 2t+1$ and $i = 2,4,\ldots,2t$, and
$v_0 = n-r(v)$, where  \[r(v) = \sum_{k=1}^t (2f_t(k)-2-v_k).\]
Notice that $v_{2t+1} = 1$, and $\mu$ has 1 part of size $2t+1$
and no parts of size $4t+2$.
Hence, to prove Equation~\ref{eq:M'f}, it suffices to prove that
$S \subseteq M'(\psi,\mu)$.
It suffices to prove that $r(v) \le \frac{n}{2}$.  Indeed, we have
\[\frac{r(v)}{2} \le \sum_{k=1}^t (f_t(k)-1).\]
Lemma~\ref{lemma:BoundSumf} implies that $r(v) \le \frac{n}{2}$ for all $t,v$.

The $n$-cycle Orbit-Splitting Theorem~\ref{thm:nCycleOrbitSplitting} implies that $\Cl(n) \ge |M(\psi,\mu)| \ge \prod_{k=1}^u f(k)$.
By Lemma~\ref{lemma:BoundProdf}, it follows that $\Cl(n) \ge 2^{2t}$, and
Lemma~\ref{lemma:BoundN_0(t)} yields that
\[\Cl(6(t+1)^2) \ge 2^{2t}.\]

We now let $t$ vary. Let $N \ge 24$ be a positive integer.  If $6(t+1)^2 \le N < 6(t+2)^2$,
then we have
\[\log_2\Cl(N) \ge 2t > 2\left(\sqrt{\frac{N}{6}}-2\right) = \sqrt{\frac{2N}{3}} - 4.\]
It follows that
\[\Cl(N) \ge \frac{1}{16} 2^{\sqrt{\frac{2N}{3}}}.\]
The bound is trivial for $N < 24$, and thus we have established the result for all $N$.
\end{proof}

\begin{rem}
A simpler construction can establish that $\Cl(N) = \Omega\left(2^{\sqrt{\frac{N}{2}}}\right)$.
\end{rem}

\begin{proof}[Proof of Theorem~\ref{thm:Cl'LowerBound}]
As in the previous proof, let $t$ be a positive integer.
Let \[n_1(t) = 4 + 2t+1 + \sum_{i=1}^{t} 2(2k-1)\left\lfloor \frac{4t+2}{2k-1}-1 \right\rfloor.\]
Let $n(t)$ be the smallest prime number that is at least $n_1(t)$.  Let $\epsilon(t) = \frac{n(t)}{n_0(t)} - 1$.

Fix $t$, and let $n = n(t)$.  It is clear that $n_1(t) > 2$, which implies that $n \equiv n_1(t) \pmod{2}$.
Let $2\alpha+1 = 2t+1 + n - n_0$.  Let $\psi = (n) \dashv n$, and let
$\mu \dashv n$ be the partition of $n$ with $f(k)$ parts of size $2k- 1$
for $1 \le k \le t$, two parts of size 2, and one part of size $n-n_0(t)$.
By Lemma~\ref{lemma:BoundN_0(t)}, we have $n_1(t) \le n_0(t) + 4 < 6(t+1)^2$,
which implies that $n < 6(t+1)^2(1+\epsilon(t))$.

We claim that
\begin{equation}
\label{eq:M'f2}
\left|M(\psi,\mu) \cap \mathcal{B}\right| \ge \prod_{k=1}^t f(k).
\end{equation}
Let $S$ be the set of tuples $(v_0,v_1,\ldots,v_n)$ such that
$f(k)-1 \le v_{2k+1} \le 2f_t(k)-2$ for all $1 \le k \le t$,
$v_{n-n_0(t)} = 1$, $v_0 = n-r(v)$ where
\[r(v) = 1 + \sum_{k=1}^t (2f_t(k)-2-v_k),\]
and $v_i = 0$ for all other $i$.
It follows from Lemma~\ref{lemma:BoundSumf} that $r(v) \le \frac{n}{2}$
for all $v,t$, which implies that $S \subseteq M'(\psi,\mu)$.
Notice that $v_{n-n_0(t)} = 1$, and $\mu$ has 1 part of size $n-n_0(t)$
and no parts of size $2n-2n_0(t)$.
Equation~\ref{eq:M'f2} follows.

Let $f$ be a Belyi function with monodromy of cycle type $(\psi,\mu,\psi)$ and monodromy generators
$\sigma_0,\sigma_1,\sigma_\infty$ over $0,1,\infty$, respectively.
By definition, the permutation
$\sigma_1^{(2t-1)!!}$ is a double transposition.  Because
\[n \ge n_1(t) = n_0(t) + 4 \ge n_0(1) + 4 = 9,\]
Proposition~\ref{prop:p,22} implies that the monodromy group $G$, which is generated by $\sigma_0$ and $\sigma_1$,
contains $A_n$.  The fact that $\sigma_0$ and $\sigma_1$ are even implies
that $G = A_n$.  There are two conjugacy classes of $n$-cycles in $A_n$,
so that $\sigma_0$ and $\sigma_\infty$ can lie in the same conjugacy class
or in different conjugacy classes.  Because $\sigma_0$ and $\sigma_1$ are only
defined up to conjugation in $S_n$, the case of both monodromy generators being in one
conjugacy class lies in the same rational Nielsen class as the case of both monodromy
generators being in the other rational Nielsen class.  Furthermore,
the $S_n$-conjugacy class of permutations of cycle type $\psi$ forms
a single $A_n$-conjugacy class.  Thus, there are at most two possible rational
Nielsen classes of Belyi functions with monodromy of cycle type $(\psi,\mu,\psi)$.

By the $n$-cycle Orbit-Splitting Theorem~\ref{thm:nCycleOrbitSplitting}, there are at least
$|M(\psi,\mu)|\ge \prod_{k=1}^t f(k)$ Belyi functions with monodromy of cycle type $(\psi,\mu,\psi)$.
The previous paragraph and Lemma~\ref{lemma:BoundProdf} then yield that
\[\Cl'(6(t+1)^2(1+\epsilon(t))) \ge \frac{1}{2} \prod_{k=1}^t f(k) > 2^{2t-1}\]
for all positive integers $t$.

We now let $t$ vary.  It follows from Lemma~\ref{lemma:BoundN_0(t)} that $\lim_{t \to \infty} \frac{n_1(t)}{n_0(t)} = 1$.  Because
\[ \lim_{t \rightarrow \infty} n_0(t) = \infty,\] the Prime Number Theorem
implies that
\[\lim_{t \rightarrow \infty} (1 + \epsilon(t)) = \lim_{t \rightarrow \infty} \frac{n(t)}{n_0(t)} = \lim_{t \to \infty} \frac{n(t)}{n_0(t)} = 1.\]  Fix a constant $k < 2^{\sqrt{\frac{2}{3}}}$.  Let $T$
be a positive integer such that
\[1 + \epsilon(t) < \frac{2}{3 \left(\log_2 k\right)^2}\]
for all $t > T$; such a $T$ exists because $\lim_{t \rightarrow \infty} \epsilon(t) = 0$.
Let $P = n_0(T)(1 + \epsilon(T))$, and let $N \ge P$.  There exist an integer $t \ge T$
such that \[n_0(t+1)(1 + \epsilon(t+1)) \le N < n_0(t+2)(1 + \epsilon(t+2)).\]  Then,
by Lemma~\ref{lemma:BoundSumf}, we have that $N < 6(t+2)^2(1 + \epsilon(t+2)).$
It follows that
\[t > \sqrt{\frac{N}{6(1 + \epsilon(t+2))}}-2.\]
The fact that $\Cl'$ is non-decreasing implies that
\[\log_2\Cl'(N) \ge 2t-1 >
\sqrt{\frac{2N}{3(1 + \epsilon(t+2))}}-5 > \sqrt{N}\log_2k - 5.\]
The theorem follows.
\end{proof}

\section{Proofs of Theorems~\ref{thm:sqrtIsPowerful} and~\ref{thm:OddDegreeBelyi}}
\label{sec:FaithfullnessProofs}

In Section~\ref{subsec:oddBelyiProof}, we prove Theorem~\ref{thm:OddDegreeBelyi}, a variant of Belyi's Theorem for Belyi functions of odd degree.  In Section~\ref{subsec:sqrtIsPowerfulProof}, we apply Theorems~\ref{thm:PropOfSqct} and~\ref{thm:OddDegreeBelyi} to prove Theorem~\ref{thm:sqrtIsPowerful}.

\subsection{Proof of Theorem~\ref{thm:OddDegreeBelyi}}
\label{subsec:oddBelyiProof}

For a morphism $f$, let $B(f)$ denote the branch locus of $f$.
We will adapt Belyi's first proof~\cite{BelyiOrig} to the setting of Belyi functions of odd degree.  We start with an arbitrary $\BAR{Q}$-morphism $f$ that has odd degree from an algebraic curve $X$ that is defined over $\BAR{Q}$ to $\mathbb{P}^1$.  Up to an automorphism of $\mathbb{P}^1$, we have $B(f) \subseteq \mathbb{P}^1(\BAR{Q})$.  We then successively compose $f$ with odd-degree polynomials until the branch locus of the composite is contained in $\mathbb{P}^1(\mathbb{Q})$.  We finish by composing with odd-degree rational functions to force the branch locus of the composite to lie within $\{0,1,\infty\}$. Our specific choice of polynomials and rational functions differs from Belyi's original choices because we restrict ourselves to functions that have odd degree.

Theorem~\ref{thm:OddDegreeBelyi} will follow quite simply from the following proposition.

\begin{prop}
\label{prop:collapseUsingAnOddDegree}
Let $S \subseteq \mathbb{P}^1_\BAR{Q}$.  Then, there exists a non-constant morphism $f: \mathbb{P}^1 \to \mathbb{P}^1$ that is defined over $\mathbb{Q}$ such that
\begin{enumerate}[label=(\arabic*)]
\item $f(S) \cup B(f) \subseteq \{0,1,\infty\}$; and
\item $f$ has odd degree.
\end{enumerate}
\end{prop}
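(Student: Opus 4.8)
The plan is to mimic Belyi's two-step argument, first making the branch locus $\mathbb{Q}$-rational and then collapsing it into $\{0,1,\infty\}$, but at every stage using only odd-degree maps $\mathbb{P}^1 \to \mathbb{P}^1$ defined over $\mathbb{Q}$. Since the composite of odd-degree maps has odd degree, it suffices to produce, at each step, an odd-degree rational function over $\mathbb{Q}$ that shrinks the relevant finite set appropriately and introduces only controlled new branch points.

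The key steps, in order, are as follows. First, reduce the field of definition of the branch locus: if $S \cup B(\mathrm{id})$ contains a point with minimal polynomial of degree $d > 1$ over $\mathbb{Q}$, I want to apply a $\mathbb{Q}$-rational map $h$ that sends the Galois orbit of that point to a single $\mathbb{Q}$-point while adding only $\mathbb{Q}$-rational branch points. Belyi uses the minimal polynomial itself (degree $d$) together with a correction; here I need an odd-degree substitute. The natural candidate is to replace the minimal polynomial $m(x)$ of degree $d$ by $m(x)^2$ if $d$ is even (degree $2d$ is even — no good) — so instead I would use a trick like post-composing with $x \mapsto x^2$ only where it preserves oddness, or better, adjoin one extra rational point to the orbit so the resulting map has odd degree. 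Concretely, if the orbit has even size $d$, I include an auxiliary rational point (or pass to $m(x)(x-a)$ for suitable $a \in \mathbb{Q}$) to get an odd-degree polynomial whose critical values lie in $\mathbb{Q}$; one then checks that the new branch points are $\mathbb{Q}$-rational by the standard argument (critical values of a $\mathbb{Q}$-polynomial are algebraic over $\mathbb{Q}$, and by choosing things symmetrically they can be forced into $\mathbb{Q}$, or one iterates). Iterating strictly decreases $\max_{s}[\mathbb{Q}(s):\mathbb{Q}]$ over the current branch-plus-image set, so after finitely many steps $S \cup B(f)$ is a finite subset of $\mathbb{P}^1(\mathbb{Q})$.

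Second, collapse a $\mathbb{Q}$-rational finite set into $\{0,1,\infty\}$ by odd-degree maps. Belyi's tool is the map $\lambda_{m,k}(x) = \frac{(m+k)^{m+k}}{m^m k^k} x^m (1-x)^k$, which sends $0,1,\infty$ into $\{0,1,\infty\}$ and has the single additional critical value $1$ at $x = \frac{m}{m+k}$, thereby reducing a four-point set $\{0,1,\infty,\frac{m}{m+k}\}$ to a three-point set. Here $\deg \lambda_{m,k} = m+k$, so I need $m+k$ odd, i.e. one of $m,k$ even and the other odd — this is always arrangeable since $\frac{m}{m+k}$ can be any rational in $(0,1)$ with $\gcd(m,k)=1$ and at least one of $m,k$ odd, and we can always normalize the point to be collapsed by a $\mathbb{Q}$-affine (hence degree $1$) change of coordinates to lie in $(0,1)$. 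Repeatedly applying such odd-degree $\lambda_{m,k}$ reduces $|S \cup B(f)|$ by one each time until it is contained in $\{0,1,\infty\}$; composing all these maps with the output of the first stage gives the desired $f$, which has odd degree as a composite of odd-degree maps.

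The main obstacle I anticipate is the first stage: ensuring that the odd-degree polynomial used to collapse a Galois orbit genuinely has all its critical values rational (or at least of strictly smaller degree), while simultaneously keeping the degree odd. Belyi's original choice exploits that the derivative of the minimal polynomial times a correction has controlled roots; forcing oddness of the degree without losing this control — likely by the parity adjustment of appending or doubling a rational point, or by composing with an odd-degree Chebyshev-like polynomial — is the delicate point and is presumably where the paper's "specific choice of polynomials" that "differs from Belyi's" does its work. The second stage is routine once one observes $m+k$ can be taken odd.
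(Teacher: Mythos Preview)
Your Stage 1 is essentially the paper's approach: for a Galois orbit of even size $d$ you take $m(x)(x-\alpha)$ with $\alpha\in\mathbb{Q}$ chosen so that some fixed $\beta\in\mathbb{Q}$ becomes a critical point (solve the linear equation $(\beta-\alpha)m'(\beta)+m(\beta)=0$). That forces one rational branch value, so the number of irrational branch points drops below $|S|$ and iteration terminates. Your sketch is vague but points at the right idea.

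The genuine gap is in Stage 2, which you call ``routine''. Your claim that ``$m+k$ can be taken odd'' is false. Writing $r=\frac{p}{q}\in(0,1)$ in lowest terms, the Belyi map $\lambda_{m,k}$ with critical point $r$ has $m=p$, $k=q-p$, hence $\deg\lambda_{m,k}=m+k=q$. When $q$ is even (so $p$ and $q-p$ are both odd) this degree is even, and no $\mathbb{Q}$-M\"obius transformation fixing $\{0,1,\infty\}$ setwise helps: of the six such maps, only $x\mapsto x$ and $x\mapsto 1-x$ keep the fourth point in $(0,1)$, and both leave the denominator equal to $q$. So for example $r=\tfrac12$ cannot be collapsed by any odd-degree $\lambda_{m,k}$ after such a normalization.

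This is exactly where the paper does real work. For $q$ even it precomposes with a specific degree-$3$ rational map $h=g/(g-1)$, $g(x)=\frac{q}{q-p}x^2(x-r)$, which sends $\{0,1,\infty,r\}$ into $\{0,1,\infty,r_2\}$ with $r_2=\frac{p^3}{p^3+27(q/2)^2(q-p)}$; one checks that $r_2$ has odd denominator when $4\mid q$, and when $q\equiv 2\pmod 4$ a second application of $h$ lands in the $4\mid q$ case. Only then can the odd-degree $\lambda_{p,q-p}$ be applied. So the hard step is Stage 2, not Stage 1 as you guessed, and the fix is a nontrivial cubic pre-composition with a mod-$4$ case analysis rather than a mere affine normalization.
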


We collapse the branch locus into $\mathbb{P}^1(\mathbb{Q})$ using repeated applications of the following lemma.

\begin{lemma}
\label{lem:collapseTowardQUsingOddDegree}
Let $S \subseteq \BAR{Q} \setminus \mathbb{Q}$ be a finite, non-empty set that is stable under the action of $\GQ$.  Then, there exists a non-constant polynomial $f \in \mathbb{Q}[x]$ such that
\begin{enumerate}[label=(\arabic*)]
\item $f(S) = \{0\}$;
\item $\left|B(f) \setminus \mathbb{P}^1(\mathbb{Q})\right| < |S|$; and
\item $f$ has odd degree.
\end{enumerate}
\end{lemma}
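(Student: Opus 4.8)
The plan is to construct $f$ from the monic polynomial $g(x)=\prod_{\alpha\in S}(x-\alpha)$, which lies in $\mathbb{Q}[x]$ because $S$ is $\GQ$-stable, and which has no rational root since $S\cap\mathbb{Q}=\emptyset$ (note that $|S|\ge 2$, as every $\GQ$-orbit contained in $\BAR{Q}\setminus\mathbb{Q}$ has size at least $2$). For any non-constant $h\in\mathbb{Q}[x]$, viewed as a morphism $\mathbb{P}^1\to\mathbb{P}^1$ of degree $D$, one has $B(h)=\{\infty\}\cup h(\{h'=0\})$; this is a $\GQ$-stable set, and since $\{h'=0\}$ has at most $D-1$ elements and $\infty\in\mathbb{P}^1(\mathbb{Q})$, we get the crude estimate $|B(h)\setminus\mathbb{P}^1(\mathbb{Q})|\le D-1$. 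This already handles the case $|S|$ odd: take $f=g$, so that $\deg f=|S|$ is odd, $f(S)=\{0\}$ by construction, and $|B(f)\setminus\mathbb{P}^1(\mathbb{Q})|\le|S|-1<|S|$.

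The only real difficulty is condition (3) when $|S|$ is even: then $\deg g=|S|$ is even, and since any polynomial vanishing on $S$ has degree at least $|S|$, the smallest available odd degree is $|S|+1$, for which the crude bound yields only $|S|$ bad branch points --- one too many. I would resolve this by engineering a rational critical value. Set $f(x)=(x-r)\,g(x)$, where $r\in\mathbb{Q}$ is chosen as follows: fix any $s\in\mathbb{Q}$ that is not a root of $g'$ (only finitely many values are excluded, and $g(s)\ne 0$ automatically), and put $r=s+g(s)/g'(s)\in\mathbb{Q}$. Then $\deg f=|S|+1$ is odd, $f$ vanishes on the nonempty set $S$ so $f(S)=\{0\}$, and from $f'=g+(x-r)g'$ one computes $f'(s)=g(s)+(s-r)g'(s)=0$; hence $s$ is a critical point of $f$ with critical value $f(s)=-g(s)^2/g'(s)\in\mathbb{Q}$. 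Since $\deg f'=|S|$ (the two degree-$|S|$ contributions from $g$ and $(x-r)g'$ add rather than cancel, as $g$ is monic), $f(\{f'=0\})$ has at most $|S|$ elements, at least one of which --- namely $f(s)$ --- is rational; together with $\infty$ this gives $|B(f)\setminus\mathbb{P}^1(\mathbb{Q})|\le|S|-1<|S|$.

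What then remains is only routine verification: that $\deg f'=|S|$ exactly, that $s$ can be chosen away from the finitely many roots of $g'$, and that $f$ is non-constant (immediate since $|S|\ge 2$). I do not expect a genuine obstacle anywhere; the one step requiring an idea rather than a computation is the even case, where the point is to spend the unavoidable extra unit of degree on producing a rational branch point instead of letting it contribute a new irrational one.
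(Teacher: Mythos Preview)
Your proof is correct and is essentially identical to the paper's: the odd case is handled by $f=\prod_{s\in S}(x-s)$, and in the even case the paper also multiplies this product by a rational linear factor $(x-\alpha)$ chosen so that a pre-selected rational $\beta$ (your $s$) becomes a critical point, via exactly the equation $\alpha=\beta+h(\beta)/h'(\beta)$. Your write-up is in fact slightly more detailed (you observe $|S|\ge 2$, note $g(s)\ne 0$ is automatic, and compute the rational critical value explicitly), but the idea is the same.
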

\begin{proof}
We do casework on the parity of $|S|$ to define $f$.
\begin{casework}
\item $|S|$ is odd.  We can follow Belyi~\cite{BelyiOrig} and let
\[f(x) = \prod_{s \in S} (x-s).\]
Because $B(f) = \{\infty\} \cup f(V(f'))$ and $\deg f' = |S|-1$, we are done.
\item $|S|$ is even.  Let
\[h(x) = \prod_{s \in S} (x-s).\]
Let $\beta \in \mathbb{Q}$ be such that $h'(\beta) \not= 0$.  Let $\alpha$ be the solution to the linear equation
\[(\beta - \alpha) h'(\beta) + h(\beta) = 0.\]
It is evident that $\alpha \in \mathbb{Q}$.  Let
\[f(x) = h(x)(x-\alpha).\]
Note that by construction, we have $f'(\beta) = 0$, and hence $f(\beta)$ is a rational branch point of $f$.  It follows that that \[|B(f) \setminus \mathbb{P}^1(\mathbb{Q})| \le \deg f' - 1 = |S| - 1,\] which completes the proof.
\end{casework}
\end{proof}

We then collapse $S$ to 3 points when $S \subseteq \mathbb{P}^1(\mathbb{Q})$ using the following lemma repeatedly.

\begin{lemma}
\label{lem:collapseASubsetOfQUsingOddDegree}
Given $r \in \mathbb{Q} \setminus \{0,1\}$, there exists a non-constant morphism $f : \mathbb{P}^1 \to \mathbb{P}^1$ that is defined over $\mathbb{Q}$ such that
\begin{enumerate}[label=(\arabic*)]
\item $f(\{0,1,\infty,r\}) \cup B(f) \subseteq \{0,1,\infty\}$; and
\item $f$ has odd degree.
\end{enumerate}
\end{lemma}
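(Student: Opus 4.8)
The plan is to imitate Belyi's original argument: build $f$ as a composition of a M\"obius transformation, odd-degree polynomials, and odd-degree rational functions, at each stage keeping the degree odd and pushing $r$ toward $\{0,1,\infty\}$ without enlarging the branch locus. First I would normalize: the copy of $S_3$ inside $\mathrm{PGL}_2(\mathbb{Q})$ generated by $x\mapsto 1-x$ and $x\mapsto 1/x$ consists of degree-one maps with empty branch locus fixing $\{0,1,\infty\}$ setwise, and the $S_3$-orbit of any $r\in\mathbb{Q}\setminus\{0,1\}$ meets $(0,1)$. So, precomposing $f$ with a suitable such $\mu$, I may assume $0<r<1$, and write $r=p/q$ in lowest terms with $0<p<q$.

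Then do casework on the parity of $q$. If $q$ is odd, take $g(x)=\dfrac{q^{q}}{p^{p}(q-p)^{q-p}}\,x^{p}(1-x)^{q-p}$, the classical Belyi polynomial. It is defined over $\mathbb{Q}$, has odd degree $q$, sends $0,1\mapsto 0$ and $\infty\mapsto\infty$, carries $p/q$ (the unique zero of $g'$ other than $0$ and $1$) to $1$, and has $g'$ vanishing only at $0,1,p/q$, so $B(g)\subseteq\{0,1,\infty\}$. Then $f=g\circ\mu$ satisfies $(1)$ and has odd degree, finishing this case.

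Suppose $q$ is even, so $p$ and $q-p$ are both odd and the polynomial above has even degree. My plan here is to interpose an auxiliary odd-degree Belyi map $\psi$ that replaces $r$ by a rational of smaller $2$-adic denominator, and then recurse into the odd case. A natural candidate is a degree-$3$ map $\psi=N/D$ with $\deg N=\deg D$ and $D$ monic, for instance $\psi(x)=x^{3}/\bigl((x-4)^{2}(x-1)\bigr)$: one checks directly that $B(\psi)=\{0,1,\infty\}$ and $\psi(\{0,1,\infty\})=\{0,1,\infty\}$, while $\psi(p/q)=p^{3}/\bigl((p-4q)^{2}(p-q)\bigr)$ turns a fraction with odd numerator and even denominator into one with odd numerator and odd denominator. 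Since $B(g\circ\psi\circ\mu)=B(g)\cup g\bigl(B(\psi)\bigr)\subseteq\{0,1,\infty\}$ and the degree is a product of odd numbers, interleaving such steps with $S_3$-normalizations should drive $q$ to an odd value, after which the odd case closes the argument.

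The step I expect to be the main obstacle is proving this reduction actually terminates for every $r$. Certain $S_3$-classes of cross-ratios are collapsed by a degree-$2$ map but by no degree-$3$ map — the basic example being the harmonic class $r\in\{-1,2,\tfrac12\}$, collapsed by $x\mapsto x^{2}$ — and naive reductions can fail to leave such a class (e.g. the $\psi$ above sends $1/2$ to $-1/49$, whose reduced representative in $(0,1)$ again has even denominator). I expect the paper disposes of these finitely many exceptional classes directly, exhibiting an explicit odd-degree (necessarily $\ge 5$) Belyi map over $\mathbb{Q}$ with four rational special points in the required cross-ratio; producing and verifying such a map — overcoming the $2$-adic obstruction that blocks a single low-degree collapse — is the delicate point. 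Once these classes are handled, every $r$ reaches the odd-denominator case after finitely many odd-degree reductions, and the lemma follows.
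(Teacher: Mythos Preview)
Your normalization to $r\in(0,1)$ and your treatment of the odd-denominator case coincide with the paper's. The gap is in the even-denominator case, and it is not the one you diagnose.

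Your map $\psi(x)=x^{3}/\bigl((x-4)^{2}(x-1)\bigr)$ is indeed a $\mathbb{Q}$-Belyi map with $B(\psi)=\{0,1,\infty\}$ and $\psi(\{0,1,\infty\})\subseteq\{0,1,\infty\}$, and for $r=p/q\in(0,1)$ with $p$ odd and $q$ even it does produce $\psi(r)=p^{3}/\bigl((p-4q)^{2}(p-q)\bigr)$, already in lowest terms with odd numerator and odd denominator. But for \emph{every} $r\in(0,1)$ one has $\psi(r)<0$, since $r-1<0$ while the other factors are positive. Writing $\psi(r)=-a/b$ with $a,b>0$ odd and coprime, the only members of its $S_{3}$-orbit lying in $(0,1)$ are $a/(a+b)$ and $b/(a+b)$, both with \emph{even} denominator $a+b$. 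So after renormalizing you are back in the even-denominator case with a strictly larger denominator, and the recursion loops forever. This is not confined to the harmonic class or to finitely many exceptional cross-ratios---your own computation for $r=\tfrac12$ exhibits the generic behaviour---so no finite list of hand-built degree-$\ge 5$ maps will close the argument.

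The paper avoids this by using an $r$-dependent degree-$3$ map that stays inside $(0,1)$. With $g(x)=\tfrac{q}{q-p}\,x^{2}(x-r)$ and $h=g/(g-1)$, one checks that $h(\{0,r,1,\infty\})\subseteq\{0,1,\infty\}$ and that the sole remaining branch value is
\[
r_{2}=h\!\left(\frac{2p}{3q}\right)=\frac{p^{3}}{p^{3}+27\,(q/2)^{2}(q-p)}\in(0,1),
\]
so no $S_{3}$-renormalization is needed. If $4\mid q$ then $(q/2)^{2}$ is even and the denominator of $r_{2}$ is odd, so the odd case applies directly; if $q\equiv 2\pmod{4}$ a short congruence gives $4\mid\bigl(p^{3}+27(q/2)^{2}(q-p)\bigr)$, and one more application of the same construction (with $r_{2}$ in place of $r$) lands in the previous subcase. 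The even-denominator case therefore terminates in at most two degree-$3$ steps, and no map of degree $\ge 5$ is required even for $r=\tfrac12$.
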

\begin{proof}
By applying an automorphism of $\mathbb{P}^1 \setminus \{0,1,\infty\}$, we can and will assume that $r \in (0,1)$ for the remainder of this proof.

Write $r = \frac{p}{q}$ with $(p,q) = 1$ and $0 < p < q$.  We do casework on the 2-adic valuation of $q$.  Each case will depend on the previously proven cases.
\begin{casework}
\item $q$ is odd.  Following Belyi~\cite{BelyiOrig}, we let
\[f(x) = \frac{q^q}{p^p(q-p)^{q-p}} x^p (1-x)^{q-p}.\]
We have $B(f) = \{0,1,\infty\}$ as well as $f(0) = f(1) = 0$ and $f(r) = 1$.  Because $q$ is odd, $f$ has odd degree.
\item $q$ is even.  We will need to divide into subcases based on the residue of $q$ modulo 4 later.  Firstly, let
\[g(x) = \frac{q}{q-p} x^2 \left(x - r\right),\]
and let $h = \frac{g}{g-1}$.  Note that $g(1) = 1$.  The logarithmic derivative of $g$ is
\[\frac{g'}{g} = \frac{2}{x} + \frac{1}{x-r} = \frac{3x - 2r}{x(x-r)}.\]
Therefore, the only critical point of $g$ that is not a zero or a pole is $\frac{2p}{3q}$, and
\[g\left(\frac{2p}{3q}\right) = -\frac{4p^3}{27q^2(q-p)}.\]
Therefore, we have
\[h\left(\frac{2p}{3q}\right) = \frac{4p^3}{4p^3 + 27q^2(q-p)} = \frac{p^3}{p^3 + 27\left(\frac{q}{2}\right)^2(q-p)},\]
a rational number that we denote by $r_2$.
Let us now analyze the function $h$.  We have
$B(h) = \{0,r_2,1,\infty\},$
as well as $h(0) = h(r) = 0$, $h(\infty) = 1$, and $h(1) = \infty$.  We now need to divide into cases based on whether $q$ is divisible by 4.
\begin{subcasework}
\item $q$ is divisible by 4.  Then, note that $r_2 \in (0,1)$
is a fraction with odd denominator.  By Case 1, we can find a function $f_0$ such that $f_0$ has odd degree, $f_0(\{0,1,\infty,r_2\}) \subseteq \{0,1,\infty\}$ and $B(f_0) \subseteq \{0,1,\infty\}$.  Let $f = f_0 \circ h$.  It is evident that $f$ has the desired properties.
\item $q$ is not divisible by 4.  Then, we have $q \equiv 2 \pmod{4}$, from which it follows that $p \equiv q-p \pmod{4}$.  Hence, we have
\[p^3 + 27\left(\frac{q}{2}\right)^2(q-p) \equiv p\left(p^2 + 27\left(\frac{q}{2}\right)^2\right) \equiv p \cdot 28 \equiv 0 \pmod{4},\]
so that $r_2 \in (0,1)$ is a rational number with odd numerator and a denominator that is divisible by 4.
By Subcase 2.1, we can find a function $f_0$ with the properties asserted in the lemma for $r = r_2$.  We can then proceed as in Subcase 2.1, and let $f = f_0 \circ h$.
\end{subcasework}
\end{casework}
\end{proof}

We are now ready to complete the proofs of Proposition~\ref{prop:collapseUsingAnOddDegree} and Theorem~\ref{thm:OddDegreeBelyi}, following Belyi~\cite{BelyiOrig}.

\begin{proof}[Proof of Proposition~\ref{prop:collapseUsingAnOddDegree}]
By enlarging $S$, we can assume that $S$ is $\GQ$-stable.  Let $T_0 = S \setminus \mathbb{P}^1(\mathbb{Q})$.  We claim that there exists a polynomial $h \in \mathbb{Q}[x]$ such that $h(T_0) = \{0\}$ and $B(h) \subseteq \mathbb{Q}$.  To see this, we can apply Lemma~\ref{lem:collapseTowardQUsingOddDegree} repeatedly.  Indeed, if $|T_i| > 0$, let $h_i$ be the polynomial constructed by Lemma~\ref{lem:collapseTowardQUsingOddDegree} for the set $T_i$, and let $T_{i+1} = B(h_i) \setminus \mathbb{Q}$.  Because $h_i$ is defined over $\mathbb{Q}$, the set $T_{i+1}$ is $\GQ$-stable.  The process terminates after a finite number of steps because $|T_{i+1}| < |T_i|$ for all $i$.  Suppose that $N$ steps are required.  Then, let $h = h_{N-1} \circ \cdots \circ h_0$.  It is evident that $h$ has the required properties.

Let $U_0 = h(S) \cup B(h)$.  We will find a rational function $g$ that is defined over $\mathbb{Q}$ and has odd degree such that $g(U_0) \subseteq \{0,1,\infty\}$ and $B(g) \subseteq \{0,1,\infty\}$.  By construction, $U_0$ is a subset of $\mathbb{Q}$. If $|U_0| \le 3$, then we can simply take $g$ to be an appropriate automorphism of $\mathbb{P}^1$.

Hence, we may assume that $|U_0| \ge 4$.  We can find an automorphism $\theta$ of $\mathbb{P}^1$ that is defined over $\mathbb{Q}$ and such that $\theta(U) \supseteq \{0,1,\infty\}$.  We will apply Lemma~\ref{lem:collapseTowardQUsingOddDegree} repeatedly to conclude the proof.  If $|U_i| \ge 3$, let $\alpha \in U_i \setminus \{0,1,\infty\}$ and let $g_i$ be the rational function constructed by Lemma~\ref{lem:collapseASubsetOfQUsingOddDegree} for $r = \alpha$.  Then, let $U_{i+1} = g_i(U_i)$.  By construction, the sets $U_i$ decrease in size, and therefore the process terminates eventually. Suppose that $N'$ steps are required.  Let $g = g_{N'-1} \circ \cdots \circ g_0 \circ \theta$.  It is evident that $g$ satisfies the required properties.

It is not difficult to see $f = g \circ h$ satisfies the conditions of the theorem.
\end{proof}

\begin{proof}[Proof of Theorem~\ref{thm:OddDegreeBelyi}]
Let $g: X \to \mathbb{P}^1$ be a non-constant meromorphic function that is defined over $\BAR{Q}$ and has odd degree, and let $S = B(g)$.  Because $g$ is defined over $\BAR{Q}$ we may assume that $S \subseteq \mathbb{P}^1_\BAR{Q}$.  By Proposition~\ref{prop:collapseUsingAnOddDegree}, there exists a function $f: \mathbb{P}^1 \to \mathbb{P}^1$ that has odd degree such that $f(S) \subseteq \{0,1,\infty\}$ and $B(f) \subseteq \{0,1,\infty\}$.  The morphism $f \circ g$ has odd degree and is unbranched outside $\{0,1,\infty\}$ by construction.
\end{proof}

\subsection{Proof of Theorem~\ref{thm:sqrtIsPowerful}}
\label{subsec:sqrtIsPowerfulProof}

The idea of the proof is to pull back Belyi maps $f$ of odd degree by $t = \frac{4f}{(f+1)^2}$ and apply Theorem~\ref{thm:PropOfSqct}(b) to constrain $\Sqrt(f)$.  In order to be able to apply Theorem~\ref{thm:PropOfSqct}(b), we need to constrain the monodromy of $f$, which we do by post-composing $f$ with a fixed Belyi map of degree 5.

Let $t_0$ denote a Belyi function with monodromy of cycle type $221$ over 0 and $\infty$ and monodromy of cycle type 5 over 1, normalized so that $t_0^{-1}(0) = \{0,1,\infty\}$ and $t_0$ is unramified at $\infty$.
It is not difficult to see that such a $t_0$ exists (for example, by Edmonds, Kulkarni, and Stong's result: Theorem~\ref{thm:Existence}).  The particular choice of which point among $\{0,1,\infty\}$ is not a ramification of $t_0$ is irrelevant.

The construction of the functions $f$ in Theorem~\ref{thm:sqrtIsPowerful} will use the following proposition.

\begin{prop}
\label{prop:cruxOfSqrtBeingPowerful}
Let $g_0$ be Belyi function of odd degree $k$.  Let $g = t_0 \circ g_0$.
\begin{enumerate}[label=(\alph*)]
\item The function $g$ is a Belyi function with monodromy of cycle type $(5^k)$ and $(2^{2k},1^k)$ over $1$ and $\infty$, respectively.
\item In the notation of Section~\ref{subsec:GQInvariance}, the morphism $f = \Sigma(g)$ is Belyi, has odd degree, and satisfies $\Sqrt(f) = \{(\sigma_0,\sigma_1,\sigma_\infty)\}$, where $(\sigma_0,\sigma_1,\sigma_\infty)$ is the monodromy triple of $g$ (defined up to simultaneous conjugation in $S_n$).
\end{enumerate}
\end{prop}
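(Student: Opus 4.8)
The plan is to handle part (a) by a direct local ramification computation for the composite $g = t_0 \circ g_0$, and part (b) by feeding the resulting cycle-type data into the machinery of Section~\ref{subsec:GQInvariance} together with Theorem~\ref{thm:PropOfSqct}(b).

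For part (a), I would first check that $g$ is unramified outside $\{0,1,\infty\}$: by multiplicativity of ramification indices along the composite $X_0 \to \mathbb{P}^1 \to \mathbb{P}^1$, any point of the domain $X_0$ of $g_0$ at which $g$ ramifies either lies over a point of $\{0,1,\infty\}$ in the middle $\mathbb{P}^1$ --- hence, since $t_0^{-1}(0) = \{0,1,\infty\}$, maps to $0$ below --- or lies over a ramification point of $t_0$, and so maps into $B(t_0) \subseteq \{0,1,\infty\}$. As $X_0$ is irreducible, $g$ is a connected \'{e}tale cover of $\mathbb{P}^1 \setminus \{0,1,\infty\}$, i.e.\ a Belyi function. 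For the cycle types, the key observation is again that $t_0^{-1}(0) = \{0,1,\infty\}$ forces every point of $t_0^{-1}(1) \cup t_0^{-1}(\infty)$ to lie off $\{0,1,\infty\}$ in the middle $\mathbb{P}^1$, where $g_0$ is unramified; hence over each such point $g_0$ has exactly $k$ distinct preimages, and the ramification index of $g$ there equals that of $t_0$ at the point below. Since $t_0$ is totally ramified of index $5$ over $1$, the fiber of $g$ over $1$ is $k$ points of index $5$, giving cycle type $(5^k)$; since $t_0$ has cycle type $221$ over $\infty$, the fiber of $g$ over $\infty$ is $2k$ points of index $2$ and $k$ of index $1$, giving cycle type $(2^{2k},1^k)$.

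For part (b), write $(\sigma_0,\sigma_1,\sigma_\infty)$ for the monodromy triple of $g$. By Proposition~\ref{prop:MonodromyProduct}, $f = \Sigma(g)$ is unbranched outside $\{-1,0,1,\infty\}$ with monodromy $\sigma_0$, $\sigma_1^2$, $\sigma_1^{-1}\sigma_0\sigma_1$, $\sigma_\infty^2$ over $0,1,\infty,-1$. By part (a) the cycle type of $\sigma_\infty$ is $(2^{2k},1^k)$, so $\sigma_\infty^2 = 1$ and $f$ is in fact unramified over $-1$, hence a cover of $\mathbb{P}^1 \setminus \{0,1,\infty\}$ of degree $\deg g = 5k$, which is odd. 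Irreducibility of the domain of $f$ follows as in the proof of the Orbit-Splitting Theorem: the cycle type $(5^k)$ of $\sigma_1$ has $k$ (odd) parts of size $5$ and none of size $10$, so $K(g) \in T_r$, and Propositions~\ref{prop:RepProduct}(a) and~\ref{prop:RepFiberedProduct} show $K(\Sigma(g)\circ t) = K(g) \times \xi$ is transitive; hence $\Sigma(g)$ has connected domain, and $f$ is a Belyi function of odd degree.

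It remains to identify $\Sqrt(f)$. Since $\Sigma(g) \cong f$, we have $g \in \Sqrt'(f)$, so by Theorem~\ref{thm:SigmaSqrt}(b) the monodromy triple $(\sigma_0,\sigma_1,\sigma_\infty)$ of $g$ is an element of $\Sqrt(f)$. Hence $\Sqct(f)$ contains a triple whose middle entry is the cycle type $(5^k)$ of $\sigma_1$, which has $k$ parts of size $5$ and no parts of size $10$, with both $5$ and $k$ odd; Theorem~\ref{thm:PropOfSqct}(b) then forces $|\Sqct(f)| = |\Sqrt(f)| = 1$, whence $\Sqrt(f) = \{(\sigma_0,\sigma_1,\sigma_\infty)\}$. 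The one step that genuinely uses the special shape of $t_0$ --- and the main thing to get right --- is checking that $f$ is a Belyi function with irreducible domain: the cycle type $221$ over $\infty$ is chosen precisely so that it squares to the identity, killing the would-be ramification of $\Sigma(g)$ over $-1$, while the single $5$-cycle over $1$ (together with $\deg g_0 = k$ odd) is exactly what puts $K(g)$ in $T_r$ and yields transitivity; the remaining verifications are routine bookkeeping.
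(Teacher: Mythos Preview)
Your proposal is correct and follows essentially the same route as the paper's proof: part (a) is the same local ramification computation (the paper is simply terser), and part (b) invokes Propositions~\ref{prop:MonodromyProduct}, \ref{prop:RepProduct}(a), \ref{prop:RepFiberedProduct}, and Theorem~\ref{thm:PropOfSqct}(b) in the same order. You are in fact a bit more careful than the paper in two places: you spell out explicitly that $\sigma_\infty^2 = 1$ is what kills the ramification of $\Sigma(g)$ over $-1$, and you apply Theorem~\ref{thm:PropOfSqct}(b) to the cycle type of $\sigma_1$ (the middle monodromy generator), which is what the statement of that theorem requires --- the paper's proof writes $\sigma_\infty$ at that step, which appears to be a slip.
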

\begin{proof}
Because $t_0(\{0,1,\infty\}) = \{0\}$ and $g_0$ is Belyi, the morphism $g$ is Belyi as well.
The computation of the monodromy cycle types of $g$ over 1 and $\infty$ follow from the fact that $g_0$ is unbranched over $t_0^{-1}(\{1,\infty\})$, and part (a) follows.

It remains to prove part (b).  The fact that $f$ is unbranched outside $\{0,1,\infty\}$ follows from Proposition~\ref{prop:MonodromyProduct}.  By Propositions~\ref{prop:RepProduct}(a) and~\ref{prop:RepFiberedProduct}, the monodromy representation of $f$ acts transitively on the fiber above the base point, from which it follows that the domain of $f$ is irreducible.  Therefore, $f$ is a Belyi function.  It is evident that $f$ and $g$ have the same degree, and hence $\deg f = 5k$ is odd.

By definition, we have $g \in \Sqrt'(f)$, and it follows that $(\sigma_0,\sigma_1,\sigma_\infty) \in \Sqrt(f)$.  Note that $\sigma_\infty$ has $k$ parts of size $5$ and no parts of size 10.  Theorem~\ref{thm:PropOfSqct}(b) implies that $|\Sqct(f)| = 1$.  Because $|\Sqrt(f)| = |\Sqct(f)|$, the proposition follows.
\end{proof}

We are now ready to conclude the proof of Theorem~\ref{thm:sqrtIsPowerful}.

\begin{proof}[Proof of Theorem~\ref{thm:sqrtIsPowerful}]
Let $\sigma \not= 1 \in \GQ$.  There exists $\phi \in \BAR{Q}$ such that $\phi^\sigma \not= \phi$.  Let $E$ be a curve over $\BAR{Q}$ of genus 1 with $j(E) = \phi$.  We then know that $E^\sigma \not\cong E$ because $j\left(E^\sigma\right) = j(E)^\sigma \not= j(E)$.  Because $E$ admits a non-constant meromorphic function of degree 3 that is defined over $\BAR{Q}$, the curve $E$ admits a Belyi function $g_0: E \to \mathbb{P}^1$ of odd degree by Theorem~\ref{thm:OddDegreeBelyi}.
Let $g = t_0 \circ g_0$, let $f = \Sigma(g)$ (in the notation of Section~\ref{subsec:GQInvariance}), and let $(\sigma_0,\sigma_1,\sigma_\infty)$ be the monodromy triple of $g$.
Because $E$ is the domain of $g$ and $E^\sigma \not\cong E$ , we have $g^\sigma \not\cong g$.  By Proposition~\ref{prop:cruxOfSqrtBeingPowerful}, the degree of $f$ is odd.  Proposition~\ref{prop:cruxOfSqrtBeingPowerful} also implies that $\Sqrt(f) = \{(\sigma_0,\sigma_1,\sigma_\infty)\}$, from which it follows that $\Sqrt(f)^\sigma \not= \Sqrt(f)$.
\end{proof}

\section{Concluding remarks and open problems}
\label{sec:Conclusion}

\subsection{Generalizing the square-root class}
Let $t: \mathbb{P}^1_f \to \mathbb{P}^1_t$ be a morphism
of curves satisfying $t(\{0,1,\infty\}) \subseteq \{0,1,\infty\}$.
Given a Belyi function $f: X \to \mathbb{P}^1$, we can form the
\emph{generalized square-root class} of $f$, defined by
\[\Sqrt_t(f) = \{\text{Belyi functions }g: X' \to \mathbb{P}^1 \mid g \times_{\mathbb{P}^1_t} t \cong f\}.\]
It is clear that if $t$ is defined over a number field $K$, then
the function $\Sqrt_t(f)$ is $\Gal\left(\BAR{Q}/K\right)$-equivariant.
We recover the ordinary square-root class for the choice of $t = \frac{4f}{(f+1)^2}$.

However, if $t$ is of degree greater than 1, then $\Sqrt_t(f)$ will be empty
for most Belyi functions $f$, and therefore we do not recover a very general invariant.
In our case, where $t = \frac{4f}{(f+1)^2}$, the monodromy cycle types of $f$
above $0$ and $\infty$ must be the same in order for $\Sqrt(f)$ to be nonempty.
We give an example that suggests that one may be able to reformulate the invariant
in a manner that is applicable more generally.

\subsection{Example: Belyi functions with monodromy of cycle type \texorpdfstring{$(n,(2g+1)11\cdots1,n)$}{(n,(2g+1)11...1,n)}}
\label{sec:Example}

We apply the Orbit-Splitting Theorem to the case of Belyi functions with monodromy of
cycle type $(n,(2g+1)11\cdots1,n)$.
An explicit count of $M(n,(2g+1)11\cdots)$ and an application of the $n$-cycle Orbit-Splitting Theorem~\ref{thm:OrbitSplitting}
yield the following result.

\begin{prop}
\label{prop:nn2g+1}
Let $g$ be a positive integer and let $n \ge 4g + 1$ be an odd positive integer.  Then,
there are at least $\left\lfloor\left(\frac{g}{2}+1\right)^2\right\rfloor$ $\GQ$-orbits
classes of Belyi maps with monodromy of cycle type $(n,(2g+1)11\cdots1,n).$
\end{prop}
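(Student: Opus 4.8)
The plan is to apply the $n$-cycle Orbit-Splitting Theorem (Corollary~\ref{thm:nCycleOrbitSplitting}) with $\psi = (n) \dashv n$ and $\mu = \bigl((2g+1),1,1,\ldots,1\bigr) \dashv n$, the partition with one part equal to $2g+1$ and $n-2g-1$ parts equal to $1$ (which makes sense since $n-2g-1 \ge 2g \ge 0$ as $n \ge 4g+1$). Because $\psi = n$, Corollary~\ref{thm:nCycleOrbitSplitting} produces at least $|M'(\psi,\mu)|$ distinct $\GQ$-orbits of Belyi functions with monodromy of cycle type $(n,\mu,n) = (n,(2g+1)1\cdots1,n)$, so the whole problem reduces to the purely combinatorial count of $M'(\psi,\mu)$; I will show $|M'(\psi,\mu)| = \lfloor (\tfrac{g}{2}+1)^2\rfloor$, which is more than enough.

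Next I would unwind the definition of $M'(\psi,\mu)$. For this $\mu$ we have $\ell_1 = n-2g-1$, $\ell_{2g+1} = 1$, and $\ell_i = 0$ for every other $i \ge 1$, together with the convention $\ell_0 = n$ and with $r = 1$ the number of parts of $\psi$. Hence, for a tuple $u = (u_0,\ldots,u_n) \in M'(\psi,\mu)$: the box constraints $\tfrac{\ell_i}{2}\le u_i\le \ell_i$ force $u_{2g+1}=1$ and $u_i = 0$ for every odd $i \notin \{1,2g+1\}$; the constraint $u_i = \tfrac{\ell_i}{2}$ for non-zero even $i$ forces $u_i = 0$ for those $i$; and the only remaining freedom is in $u_0$ and $u_1$, subject to $\tfrac{n+1}{2}\le u_0\le n$ (using that $n$ is odd) and $\tfrac{n-2g-1}{2}\le u_1\le n-2g-1$ (using that $n-2g-1$ is even). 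Since $\sum_i u_i = u_0+u_1+1$, the quantity $r+\sum_i u_i - n$ equals $u_0+u_1+2-n$, so the two surviving conditions on $u$ read simply: $u_0+u_1$ is odd and $u_0+u_1\le n$. (The requirement that $u_c=\ell_c$ be odd for some odd $c$ is automatically met by $c=2g+1$.)

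Finally I would carry out the count. Writing $u_0 = \tfrac{n+1}{2}+a$ and $u_1 = \tfrac{n-2g-1}{2}+b$ with $a,b\ge 0$, one computes $u_0+u_1 = (n-g)+a+b$, so the two surviving conditions become $a+b\equiv g \pmod 2$ and $a+b\le g$; the hypothesis $n\ge 4g+1$ is precisely what makes the upper box bounds $a\le \tfrac{n-1}{2}$ and $b\le \tfrac{n-2g-1}{2}$ automatic given $a+b\le g$. Summing over the admissible values $s = a+b \in \{g,g-2,g-4,\ldots\}$ — each contributing $s+1$ ordered pairs $(a,b)$ — gives $|M'(\psi,\mu)| = \sum_{s}(s+1) = \lfloor(\tfrac{g}{2}+1)^2\rfloor$, equal to $(\tfrac{g}{2}+1)^2$ when $g$ is even and to $\tfrac{(g+1)(g+3)}{4}$ when $g$ is odd. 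Combined with the first paragraph, this proves the proposition.

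The only real work is the bookkeeping in the middle step: identifying which of the several clauses in the definition of $M'$ are active for this particular $\psi,\mu$, and recognizing that $n\ge 4g+1$ is exactly the threshold past which the ``box'' restrictions on $u_0$ and $u_1$ cease to bind. There is no conceptual difficulty once Corollary~\ref{thm:nCycleOrbitSplitting} is available.
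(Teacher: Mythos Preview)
Your proposal is correct and follows exactly the approach the paper indicates: apply Corollary~\ref{thm:nCycleOrbitSplitting} and carry out the explicit count of $M'(\psi,\mu)$. The paper itself omits the combinatorial details you have supplied, so your write-up is in fact more complete than the paper's own treatment; in particular, your observation that the hypothesis $n \ge 4g+1$ is precisely what makes the upper box constraint on $u_1$ redundant is the right way to see why that bound appears.
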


In the case of $g = 1$ and $n = 5,7,9$, we constructed the Belyi functions and explicitly
verified the following conjecture, which suggests that the square-root cycle type class
can be adapted to an invariant that describes the combinatorial action of $\GQ$ on the groups
of divisors or principal divisors.

\begin{conj}
Let $n$ be an odd positive integer, $X$ an algebraic curve, and $f: X \rightarrow \mathbb{P}^1$
a Belyi function with monodromy of cycle type $(n,311\cdots1,n)$.  Let $P$ and $O$
be the locations of the ramifications of order $n-1$ on $X$, and let $T$ be
the location of the ramification of order 2.  Then,
$\Sqct(f) = \{(22\cdots 2111,322\cdots 2,n)\}$ if and only if
\[(T) \sim \frac{n+1}{2} (P) - \frac{n-1}{2} (O)\]
as divisors on $X$.
\end{conj}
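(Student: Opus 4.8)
First I would record that $X$ is an elliptic curve: Riemann--Hurwitz gives $g\bigl(n,311\cdots1,n\bigr)=\tfrac{(n-1)+2+(n-1)}{2}-n+1=1$. Take the origin of the group $X$ to be $O$ (the totally ramified point over $\infty$), so that $\operatorname{div}(f)=n(P)-n(O)$ exhibits $P$ with $nP=O$. The heart of the argument is to run the construction in the proof of Theorem~\ref{thm:PropOfSqct}(a) in reverse and make $\Sqrt(f)$ completely explicit: by Theorem~\ref{thm:SigmaSqrt}(b) an element of $\Sqrt(f)$ is the monodromy triple of some $g\in\Sqrt'(f)$, which amounts to an involution $\iota$ of $X$ through which $t\circ f=\tfrac{4}{f+2+1/f}$ factors. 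Commutativity forces $t\circ f\circ\iota=t\circ f$; since $t(u)=t(v)$ iff $u=v$ or $uv=1$, and since $\deg f=n$ odd rules out $f\circ\iota=f$ (that would factor $f$ through the degree‑$2$ quotient $X\to X/\iota$), this is equivalent to $f\circ\iota=1/f$. Conversely any such $\iota$ does yield a Belyi function: $f+1/f$ is $\iota$‑invariant so $t\circ f$ descends, and the quotient cover is connected by Propositions~\ref{prop:RepProduct}(a) and~\ref{prop:RepFiberedProduct} (the monodromy of $f$ over $1$ contains the odd cycle of length $3$). Thus $\Sqrt(f)$ is in bijection with $\{\iota\in\operatorname{Inv}(X):f\circ\iota=1/f\}$.

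\textbf{Classifying the involutions.} A fixed‑point‑free involution — translation by a $2$‑torsion point $\tau$ — cannot satisfy $f\circ\iota=1/f$: evaluating at $P$ would force $\tau=\ominus P$, impossible since $P$ is not $2$‑torsion ($n$ odd, $P\ne O$). Every other involution has the form $\iota_c\colon x\mapsto c\ominus x$, and then $\operatorname{div}(f\circ\iota_c)=n(c\ominus P)-n(c)$, so $f\cdot(f\circ\iota_c)$ has divisor $n(c\ominus P)+n(P)-n(c)-n(O)$, which vanishes only when $c=P$. Hence $\iota_P$ is the unique candidate and $f\circ\iota_P=\lambda/f$ for some constant $\lambda\in\mathbb C^\times$. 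Finally $\lambda=1$: if $\lambda=1$ then $\iota_P$ preserves the divisor $f^{-1}(1)=3(T)+\sum_i(Q_i)$ and hence fixes its unique triple point $T$, so $2T=P$; conversely if $2T=P$ then $T$ is a fixed point of $\iota_P$ and $\lambda=f(T)^2=1$. Since only $\iota_P$ can occur, $|\Sqct(f)|\le1$ always, and $\Sqct(f)\ne\varnothing$ precisely when $2T=P$ in $(X,O)$. This last condition has four solutions for $T$, exactly one of which — namely $T=\tfrac{n+1}{2}P$, which is the same as $(T)\sim\tfrac{n+1}{2}(P)-\tfrac{n-1}{2}(O)$ since $2\cdot\tfrac{n+1}{2}\equiv1\pmod n$ — lies in $X[n]$; the conjecture asserts that among the cases $\Sqct(f)\ne\varnothing$ the prescribed cycle type is attained exactly for this one.

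\textbf{Computing the cycle type.} Assume $2T=P$, so $\Sqct(f)=\{(\tau_0,\tau_1,\tau_\infty)\}$. Then $\bar u:=f+1/f$ descends to $X/\iota_P\cong\mathbb P^1$ as a degree‑$n$ function whose only pole (order $n$) is the image of $\{O,P\}$, i.e.\ a degree‑$n$ polynomial, and the square root is $g=4/(\bar u+2)$; so $\tau_0$ is an $n$‑cycle, while $\tau_1$ and $\tau_\infty$ record the multiplicity patterns of the roots of $\bar u-2=(f-1)^2/f$ and $\bar u+2=(f+1)^2/f$. A local analysis — using that $Z-\bar e_k$ acquires a square root at a branch point of $X\to X/\iota_P$ — shows: the image of $T$ is a triple root of $\bar u-2$; the image of any other fixed point of $\iota_P$ lying over $1$ is a simple root of $\bar u-2$; each $\iota_P$‑orbit pair among the simple points of $f^{-1}(1)$ yields a double root; and symmetrically for $\bar u+2$ via $f^{-1}(-1)$. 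Hence $\tau_1$ has cycle type $3\cdot2^{(n-3-m)/2}\cdot1^{m}$ (and $\tau_\infty$ the type $2^{(n-3+m)/2}\cdot1^{3-m}$), where $m$ is the number of fixed points of $\iota_P$ other than $T$ that lie over $1$; as $m$ is even ($n$ odd) and at most $3$, we have $m\in\{0,2\}$, and the conjectured cycle type is exactly the case $m=0$. To pin $m$ down I would use $\operatorname{div}(f-1)=3(T)+\sum_i(Q_i)-n(O)$, which gives $\sum_i Q_i=\ominus(T\oplus P)$ in $(X,O)$ (using $2T=P$, $nP=O$); summing the $Q_i$ by their $\iota_P$‑orbits — each non‑fixed pair sums to $P$, each of the $m$ fixed ones equals $T\oplus\tau$ for a nonzero $2$‑torsion $\tau$, and the three $\tau$'s sum to $O$ — yields $\ominus(T\oplus P)=\tfrac{n-3-m}{2}P\oplus mT\oplus(\text{sum of }m\text{ of the three }\tau)$. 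For $m=0$ this forces $T=\tfrac{n+1}{2}P$; for $m=2$ it forces $T=\tfrac{n+1}{2}P\oplus\tau$ with $\tau$ the omitted $2$‑torsion point, hence $T\ne\tfrac{n+1}{2}P$.

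\textbf{Conclusion and the hard part.} Combining the two bulleted facts: $\Sqct(f)=\{(22\cdots2111,322\cdots2,n)\}$ iff ($2T=P$ and $m=0$) iff $T=\tfrac{n+1}{2}P$ iff $(T)\sim\tfrac{n+1}{2}(P)-\tfrac{n-1}{2}(O)$. The formal dictionary between $\Sqrt$ and involutions of $X$, and the rigidity statement that $\iota_P$ is the only involution that can occur, are then the routine part. I expect the real work to be in the cycle‑type step: the local computation identifying the multiplicities of the roots of $\bar u\mp2$ from the interaction between $f$ and the degree‑$2$ map $X\to X/\iota_P$, and the $2$‑torsion bookkeeping that distinguishes $m=0$ from $m=2$. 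It is precisely there that the difference between ``a square root exists'' ($2T=P$) and ``the square root has the prescribed cycle type'' ($T$ genuinely $n$‑torsion) is detected, which is why the conjectured equivalence is subtler than it first looks.
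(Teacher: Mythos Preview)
The statement you are proving is a \emph{Conjecture} in the paper: the authors do not prove it at all, only reporting that they ``constructed the Belyi functions and explicitly verified'' it for $n=5,7,9$. So there is no paper proof to compare against. Your proposal, by contrast, is a genuine argument, and as far as I can see it is correct and actually settles the conjecture.

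The argument is clean. Your reduction of $\Sqrt'(f)$ to involutions $\iota$ on the elliptic curve with $f\circ\iota=1/f$ is right (the descended $g$ is Belyi, and the fibered product really recovers $X$ because $\deg g=n$ is odd so $g$ cannot factor through $t$). The classification of involutions on an elliptic curve into translations by $2$-torsion and maps $x\mapsto c\ominus x$ is exhaustive even for $j=0,1728$, since in $\Aut(X,O)$ only $\pm1$ square to the identity. Your divisor argument forcing $c=P$, and the identification of $\lambda=1$ with $2T=P$, are correct. The cycle-type computation via $\bar u=f+1/f$ (so $g=4/(\bar u+2)$) gives exactly what you wrote: the key local fact is that at a fixed point of $\iota_P$ the quotient map $\alpha$ has ramification $2$, halving the vanishing order of $\bar u\mp2$. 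The final bookkeeping with $\operatorname{div}(f-1)$ and Abel--Jacobi is correct and pins down $m\in\{0,2\}$; note that the $m=0$ equation gives $T=-\tfrac{n-1}{2}P$, which equals $\tfrac{n+1}{2}P$ because $nP=O$.

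Two minor comments. First, the triple in the paper's conjecture is written $(2\cdots2111,\,32\cdots2,\,n)$, but by the paper's own convention the first entry of an element of $\Sqct(f)$ is the cycle type of $\tau_0=\sigma_0$, here an $n$-cycle; your ordering $(n,\,32\cdots2,\,2\cdots2111)$ is the one consistent with the definitions, so this looks like a typo in the paper rather than in your work. Second, the ``hard part'' you flag at the end actually falls out cleanly: once you observe (as you implicitly do) that every fixed point $T+\tau_i$ of $\iota_P$ satisfies $f(T+\tau_i)^2=1$, all three of them land in $f^{-1}(1)\cup f^{-1}(-1)$, the parity of $n-3$ forces $m\in\{0,2\}$, and your divisor sum distinguishes the two cases without further analysis.
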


\appendix

\section{Proofs of Lemmata~\ref{lemma:BoundN_0(t)}, \ref{lemma:BoundSumf}, and~\ref{lemma:BoundProdf}}
\label{app:CompLemmata}

\begin{proof}[Proof of Lemma~\ref{lemma:BoundN_0(t)}]
We have \begin{align*}
n &\le 2t + 1 + \sum_{k=1}^t 2(2k-1)\left(\frac{4t+2}{2k-1}-1\right)
= 2t + 1 + 2\sum_{k=1}^t (4t+3 - 2k)\\
&= 2t+1 + t(6t+6)
= 6t^2+12t+1 < 6(t+1)^2\end{align*}
and
\begin{align*}
n &> 2t + 1 + \sum_{k=1}^t 2(2k-1)\left(\frac{4t+2}{2k-1}-2\right)
= 6t^2+12t+1 - \sum_{k=1}^t 2(2k-1)\\
&= 4t^2 + 12t + 1.\end{align*}
\end{proof}

\begin{proof}[Proof of Lemma~\ref{lemma:BoundSumf}]
We have
\[\sum_{k=1}^t (f(k)-1) \le \sum_{k=1}^t \left(\frac{4t+1}{2k-1}-1\right)
=-t + (4t+1) \sum_{k=1}^t \frac{1}{2k-1}.\]
Applying the bound
\[\log(m+1) \le \sum_{k=1}^m \frac{1}{k} \le \log m + 1,\]
which holds for all positive integers $m$, we have
\[\sum_{k=1}^t (f(k)-1) \le -t + (4t+1)\left(\log(2t-1) + 1 - \frac{\log(t)}{2}\right).\]
Therefore, we have
\[2\sum_{k=1}^t (f(k)-1) \le 6t + 2 + (4t+1)\log(4t).\]
It follows that $2\sum_{k=1}^t (f(k)-1) \le 2t^2+6t+\frac{1}{2} \le \frac{n_0(t)}{2}$ for $t \ge 8$,
where the second inequality is by Lemma~\ref{lemma:BoundN_0(t)}.
%For $t = 1,2,3,4,5,6,7$, we have $n_0(t)=5,21,47,77,127,189,241$, and $\sum_{k=1}^t f(k) = 2,8,15,22,31,40,48$, respectively.
We can easily verify the lemma for $t \le 7$, and the lemma follows.
\end{proof}

\begin{proof}[Proof of Lemma~\ref{lemma:BoundProdf}]
Fix $t$, and let $M$ denote the left-hand side.  We have
\[M > \prod_{k=1}^t \left(\frac{4t+2}{2k-1} - 1\right)
= \frac{\prod_{k=1}^t (4t + 3 - 2k)}{\prod_{k=1}^t (2k-1)}.\]
Recall that
\[(2m-1)!! = \prod_{k=1}^m (2k-1) = \frac{(2m)!}{2^m(m!)}.\]
Returning to $M$, we have
\begin{align*}
M &> \frac{(4t+1)!!}{(2t+1)!!(2t-1)!!}
= \frac{(4t+2)!2^{t+1}2^t}{(2t+2)!(2t)!2^{2t+1}}
= \frac{(4t+2)!2^{t+1}(t+1)!2^t t!}{(2t+1)!(2t+2)!(2t)!2^{2t+1}}\\
&= \frac{(4t+2)!(t+1)!t!}{(2t+1)!(2t+2)!(2t)!} = \frac{\binom{4t+2}{2t+1}}{2\binom{2t}{t}}.\end{align*}
We now apply Stirling's formula with error bounds, which is the well-known inequality
\[e^{\frac{1}{12m+1}} < \frac{m!}{\sqrt{2\pi m} \left(\frac{m}{e}\right)^m} < e^{\frac{1}{12m}}.\]
It follows that
\[e^{\frac{1}{24m+1}-\frac{1}{6m}} < \frac{\binom{2m}{m}\sqrt{\pi m}}{2^m} < e^{\frac{1}{24m}-\frac{2}{12m+1}}.\]
In particular, we have
\[\frac{-1}{6m}<\log \frac{\binom{2m}{m}\sqrt{\pi m}}{2^{2m}} <0.\]
Applying this bound to $M$, we have
\[M > 2^{2t}\sqrt{2} e^{\frac{-1}{12t+6}}
> 2^{2t}.\]
\end{proof}

\bibliographystyle{abbrv}
\bibliography{commsquare}

\begin{thebibliography}{10}

\bibitem{BelyiOrig}
G.~V. Bely{\u\i}.
\newblock On {G}alois extensions of a maximal cyclotomic field.
\newblock {\em Mathematics of the USSR-Izvestiya}, 14(2):247, 1980.

\bibitem{DeligneP1Minus3}
P.~Deligne.
\newblock Le groupe fondamental de la droite projective moins trois points.
\newblock In Y.~Ihara, K.~A. Ribet, and J.-P. Serre, editors, {\em Galois
  Groups over $\mathbb{Q}$}, volume~16 of {\em Mathematical Sciences Research
  Institute Publications}, pages 79--297, 1989.

\bibitem{DixonMortimer}
J.~D. Dixon and B.~Mortimer.
\newblock {\em Permutation Groups}.
\newblock Number 163 in Graduate Texts in Mathematics. Springer-Verlag, 1996.

\bibitem{EKS}
A.~L. Edmonds, R.~S. Kulkarni, and R.~E. Stong.
\newblock Realizability of branched coverings of surfaces.
\newblock {\em Transactions of the American Mathematical Society},
  282(2):773--790, 1984.

\bibitem{Ellenberg}
J.~S. Ellenberg.
\newblock Galois invariants of dessins d'enfants.
\newblock In M.~Fried and Y.~Ihara, editors, {\em Arithmetic Fundamental Groups
  and Noncommutative Algebra}, number~70 in Proceedings of Symposia in Pure
  Mathematics, pages 27--42, 2002.

\bibitem{Esq}
A.~Grothendieck.
\newblock Esquisse d'un programme.
\newblock In {\em {\rm Schneps and Lochak~\cite{SchnepsGAct1}}}, pages 5--48.

\bibitem{SGAI}
A.~Grothendieck.
\newblock {\em Rev\^{e}tements \'{E}tales et Groupe Fondemental: Seminaire de
  Geometrie Algebrique du Bois Marie 1960/61 (SGA 1)}.
\newblock Number 224 in Lecture Notes in Mathematics. Springer-Verlag, 1971.

\bibitem{NakamuraSchneps}
H.~Nakamura and L.~Schneps.
\newblock On a subgroup of the {G}rothendieck-{T}eichm{\"u}ller group acting on
  the tower of profinite {T}eichm{\"u}ller modular groups.
\newblock {\em Inventiones Mathematicae}, 141(3):503--560, 2000.

\bibitem{SchnepsDessinsSphere}
L.~Schneps.
\newblock Dessins d'enfants on the {R}iemann sphere.
\newblock In {\em {\rm Schneps~\cite{SchnepsGR}}}, pages 47--77.

\bibitem{GroThyIntro}
L.~Schneps.
\newblock Dessins d'enfants: The theory of cellular maps on {R}iemann surfaces.
\newblock In {\em {\rm Schneps~\cite{SchnepsGR}}}, pages 1--15.

\bibitem{SchnepsGTIntro}
L.~Schneps.
\newblock The {G}rothendieck-{T}eichm{\"{u}}ller group: A survey.
\newblock In {\em {\rm Schneps and Lochak~\cite{SchnepsGAct1}}}, pages
  183--203.

\bibitem{SchnepsGR}
L.~Schneps, editor.
\newblock {\em The Grothendieck Theory of Dessins d'Enfants}, number 200 in
  London Mathematical Society Lecture Notes Series. Cambridge University Press,
  1994.

\bibitem{SchnepsGAct1}
L.~Schneps and P.~Lochak, editors.
\newblock {\em Geometric Galois Actions I: Around Grothendieck's Esquisse d'un
  Programme}, number 242 in London Mathematical Society Lecture Notes Series.
  Cambridge University Press, 1997.

\bibitem{SerreGalois}
J.-P. Serre.
\newblock {\em Topics in {G}alois Theory}.
\newblock Number~1 in International Research Notices in Mathematics. A. K.
  Peters, 2008.

\bibitem{Wood}
M.~M. Wood.
\newblock Belyi-extending maps and the {G}alois action on dessins d'enfants.
\newblock {\em Publications of the Research Institute for Mathematical
  Sciences}, 42(3):721--737, 2006.

\bibitem{Zapponi}
L.~Zapponi.
\newblock Fleurs, arbres et cellules: un invariant {G}aloisien pour une famille
  d'arbres.
\newblock {\em Compositio Mathematica}, 122(2):113--133, 2000.

\end{thebibliography}

\end{document}